\title{The Equivariant Cobordism Category}
\author{S{\o}ren Galatius}
\email{galatius@math.ku.dk}
\author{Gergely Sz\H ucs}
\email{yyegreg@gmail.com}
\begin{document}

\begin{abstract}

For a finite group $G$, we define an equivariant cobordism category $\mathcal{C}_d^G$. Objects of the category are $(d-1)$-dimensional closed smooth $G$-manifolds and morphisms are smooth $d$-dimensional equivariant cobordisms. We identify the homotopy type of its classifying space (i.e.\ geometric realization of its simplicial nerve) as the fixed points of the infinite loop space of a certain equivariant Thom spectrum.
    
\end{abstract}

\maketitle
\tableofcontents

\section{Introduction}\label{intro}
For each finite group $G$ we define the equivariant cobordism category $\mathcal{C}_d^G$. Objects are $(d-1)$-dimensional closed smooth manifolds equipped with a smooth action of $G$. The morphism space has homotopy type 
$$\mathcal{C}_d^G(M_0, M_1)\simeq \coprod\limits_L B\Diff^G (L, \partial L), $$
where $B\Diff^G(L, \partial L)$ denotes the classifying space of $\Diff^G(L, \partial L)$, the topological group of $G$-equivariant diffeomorphisms that fix the boundary pointwise, and the disjoint union is over $G$-manifolds $L$ equipped with an equivariant diffeomorphism $\partial L \cong M_0\coprod M_1$, one in each equivariant diffeomorphism class relative $\partial L$.

The main result of this paper identifies the homotopy type of the classifying space (the geometric realization of the nerve) as the fixed point space of the infinite loop space of a certain orthogonal $G$-spectrum.
\begin{thm}\label{thm:mainthm}
  There 
  is a weak equivalence
  \begin{equation*} 
    B\mathcal{C}_d^G \simeq \left ( \Omega^{\infty-1} \MTO_d \right )^G.
  \end{equation*}
\end{thm}
The genuine $G$-equivariant spectrum $\MTO_d$ is defined in Section~\ref{sec:orth-texorpdfstr-spe}, as a certain Thom spectrum.  As the special case $G={1}$ we recover the statement of \cite{gmtw} determining the homotopy type of the non-equivariant cobordism category.

A full description of the two spaces and the map in Theorem~\ref{thm:mainthm} is too technical for an introduction, but let us describe some aspects of it.  For a manifold $B$ with trivial action and a smooth closed $G$-manifold $L$, a smooth equivariant $L$-bundle is a smooth bundle $E\to B$ where $E$ is equipped with a smooth action of $G$ and the fibers are equivariantly diffeomorphic to $L$. 
Such bundles are classified by homotopy classes of maps $B\to B\Diff^G(L)$.
By definition of $\mathcal{C}_d^G$, one of the path components of $\mathcal{C}_d^G(\varnothing,\varnothing)$, namely the endomorphism monoid of the empty set, is a model for $B\Diff^G(L)$, resulting in a map $B\Diff^G (L) \hookrightarrow \mathcal{C}_d^G (\varnothing, \varnothing)$  and an induced map $\mathcal{C}_d^G (\varnothing, \varnothing)\to \Omega B\mathcal{C}_d^G$. 
For an equivariant $L$-bundle $E\to B$ classified by a map $f\colon B\to B\Diff^G(L)$, the homotopy class of the composite 
\begin{align}\label{classify}
    B\xrightarrow{f} B\Diff^G(L)\to \Omega B\mathcal{C}_d^G\to \left ( \Omega^{\infty} \MTO_d \right )^G  
\end{align} 
can now be described using an equivariant version of the Pontryagin-Thom construction as follows. 
Choose a fiberwise embedding 
\[
    \begin{tikzcd}
    E \arrow[rr, hook] \arrow[dr]& & B\times V \arrow[dl]\\
     & B & 
    \end{tikzcd} 
\]
for some sufficiently large $G$-representation $V$. 
This induces a map $E\to \mathrm{Gr}_d(V)$ classifying the vertical tangent bundle of $E$. If $\nu_v$ denotes the vertical normal bundle, we get a map of equivariant bundles 
\[
\begin{tikzcd}
\nu_v \arrow[r]\arrow[d] & \xi_V^{\bot} \arrow[d] \\
E \arrow[r] & \mathrm{Gr}_d(V),
\end{tikzcd}
\]
where $\xi_V^{\bot}$ denotes the complement of the tautological bundle. 
Writing $Th(\nu_v)$ and $Th(\xi_V^{\bot})=\MTO_d(V)$ for the Thom spaces of the bundles and composing with the Pontryagin collapse gives an equivariant map $B\times S^V\to Th(\nu_v)\to \MTO_d(V)$. 
The adjoint gives a map $B\to \left (\Omega^V \MTO_d(V) \right )^G$, since $B$ has trivial action. 
The space $\Omega^\infty \MTO_d$ is defined as the colimit of $\Omega^V \MTO_d(V)$ over the poset of finite dimensional subrepresentations of a universal representation $\mathcal{U}_G$. 
The composite $$B\to \left( \Omega^V \MTO_d(V) \right )^G \to \left (\Omega^\infty \MTO_d \right )^G$$ 
is the homotopy class of \eqref{classify}. 

After giving the necessary definitions in \cref{sec:defs}, we prove Theorem~\ref{thm:mainthm} in \cref{sec:first_deloop}, \cref{sec:general_deloop} and \cref{sec:affine_grassmanian}. 
In \cref{sec:tangential_structures}, we introduce a version of Theorem~\ref{thm:mainthm} with tangential structures. The input is a space $\Theta$ with commuting actions of $G$ and $\GL_d(\mathbf{R})$. 
For a $G$-manifold $W$, an equivariant $\Theta$-structure is a $G\times \GL_d(\mathbf{R})$-equivariant map from the frame bundle $\Fr(W)\to \Theta$. 
We define the category $\mathcal{C}_\Theta^G$ of $G$-manifolds with $\Theta$-structure, a corresponding spectrum $MT\Theta$, and discuss a generalization of Theorem~\ref{thm:mainthm} in this setting. 

Finally, we relate our result to classical notions of equivariant bordism groups in \cref{sec:equi_bordism}.

\subsection{Acknowledgements}\ 
\label{sec:ack}

GS: I would like to thank SG foremost for all the invaluable input throughout, as well as for agreeing to be a coauthor on this paper and helping with its publication. GS is also grateful for the constructive feedback provided by Sander Kupers and Jens Reinhold.

SG: This paper is based on GS's PhD thesis, and I wish to acknowledge that the better part of this work was carried out by him.  I thank him for his invitation to join this project as a coauthor (which, under other circumstances I would have declined and insist that he finish the project on his own).

SG would like to thank Andrew Blumberg and Mona Merling for helpful discussions about equivariant delooping.

Both authors were supported by the European Research Council (ERC) under the European Union’s Horizon 2020 research and innovation programme (grant agreement No 682922), as well as by NSF grant DMS-1405001.  SG was additionally supported by the EliteForsk Prize, and by the Danish National Research Foundation (DNRF92 and DNRF151).

\section{Definitions}\label{sec:defs}
First we briefly review the theory of smooth equivariant bundles. For a finite group $G$, we define equivariant versions $\psi_d(V,W)$ of spaces of manifolds analogous to those in \cite{monoids}, then
define an equivariant version $\mathcal{C}_d$ of the cobordism category such that the fixed point category $\mathcal{C}_d^G$ recovers the $G$-bordism category described informally in the introduction. 
We also give a careful definition of $\MTO_d$ as an orthogonal $G$-spectrum. Finally, we describe the equivariant map (or rather zig-zag of maps)
$B\mathcal{C}_d \to \Omega^{\infty-1} \MTO_d$, which, after taking fixed points becomes the equivalence in Theorem~\ref{thm:mainthm}.  (We shall leave it to the interested reader to relate this morphism in the equivariant homotopy category with the Pontryagin--Thom construction described in the introduction.)

    \subsection{Equivariant bundles}\label{sec:equivariant-bundles}
    We 
    recall some definitions and results about equivariant bundles. First we discuss the general theory, then focus on the case of smooth manifold bundles.  Let us emphasize that this section is not logically necessary for the proof of Theorem~\ref{thm:mainthm}, instead the goal is to motivate the definition of the equivariant cobordism category $\mathcal{C}_d$ in subsection~\ref{sec:embedd-cobord-categ} below, and to give an interpretation of the path components in some of its morphism spaces as classifying spaces for equivariant smooth manifold bundles.
    \begin{defn}
        Let $A$ be a topological group and $G$ a finite group. A $G\mhyphen A$-bundle is a fiber bundle $p \colon E \to B$ with structure group $A$, together with actions of $G$ on $E$ and $B$ such that $p$ is $G$-equivariant and $G$ acts through maps of principal $A$-bundles. This is equivalent to saying that in the associated principal $A$-bundle $\pi\colon P\to B$, the space $P$ is a $(G\times A)$-space and $\pi$ is $G$-equivariant.
    \end{defn}
    We will write the action of $G$ on $P$ from the left and the action of $A$ on the right.
    As discussed in \cite{lashof} and \cite{bierstone}, in order for $G\mhyphen A$-bundles to have the right homotopical properties, they need to satisfy a $G$-local triviality condition. 
    For $G$ finite and $B$ Hausdorff, this can be stated as follows.
    \begin{defn}[Bierstone's condition]\label{bierstone}
        Let $p\colon E\to B$ be a $G\mhyphen A$-bundle with fiber $F$. 
        For $b\in B$ write $G_b \leq G$ for the stabilizer of $b$.
        We say the bundle satisfies Bierstone's condition if for each $b\in B$ 
        there is a $G_b$-invariant neighborhood $U_b$ of $b$ in $B$ and $G_b$-equivariant map $p^{-1}(U_b) \to U_b \times F$ that is an equivalence of $A$-bundles over $U_b$. Here $U_b \times F$ has $G_b$-action given by
        $$
        h(u, y) = (hu, \rho_b(h)y),
        $$
        for some homomorphism $\rho_b\colon G_b \to A$, where $u \in U_b$, $h\in G_b$, $y\in F$.
    \end{defn}
    Note that the homomorphism $\rho$ above is determined up to conjugacy by the action of $G_b$ on the fiber $p^{-1}(b)$.  We say two principal $G\mhyphen A$-bundles $P_1$ and $P_2$ over $B$ are equivalent if there is a $G\times A$-equivariant homeomorphism $P_1\to P_2$ over $B$.

    A slightly stronger condition on a $G\mhyphen A$-bundle $p\colon E \to B$ is to be \emph{$G\mhyphen A$-locally trivial} which additionally requires trivializability with respect to a $G$-equivariant open cover by slices, see \cite[Definition on p.~258 and Lemma 1.1]{lashof}.  Finally there is the even stronger notion of being \emph{numerable}, in which the open cover by slices is required to admit a partition of unity, see \cite[Definition on p.~262]{lashof}.  If the base $B$ is a $G$-manifold, or more generally a topological space which is Hausdorff, paracompact and completely regular, then these three notions agree, see \cite[Lemma 1.3 and Corollary 1.13]{lashof}.  Therefore we shall not dwell on the distinction.

    As explained in \cite[Section 2]{lashof}, there exists a \emph{universal} numerable principal bundle $p: E_G A \to B_G A$, such that numberable $G\mhyphen A$-bundles over $B$ is in bijective correspondence with $[B, B_GA]^G$, the set of equivariant homotopy classes of equivariant maps, and the correspondence is given by pullback of $p$.
  The following theorem gives a characterization of universal bundles (see \cite[Theorem 2.14]{lashof}).
    
    \begin{proposition}\label{universal_criterion}
    A numerable principal $G\mhyphen A$-bundle $\pi\colon E_GA\to B_GA$ is universal if for each $H\leq G$ and each homomorphism $\rho\colon H\to A$, the fixed point space $E_GA^H$ is contractible, where $H$ acts via $z\mapsto hz\rho(h)^{-1}$ for $z\in E_GA$, $h\in H$.
    \end{proposition}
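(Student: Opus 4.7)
The plan is to translate universality into the existence and homotopy uniqueness of $G$-equivariant sections of a certain associated bundle, and then invoke an equivariant Dold-type extension theorem whose input is precisely the fixed-point contractibility hypothesis.

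First I would note that universality of $\pi$ is equivalent to showing that for any numerable principal $G\mhyphen A$-bundle $P\to B$, $G\times A$-equivariant bundle maps $P\to E_GA$ exist and are unique up to equivariant bundle homotopy (applying the existence statement over $B\times I$ with prescribed boundary deduces homotopy uniqueness from existence). Such a bundle map is the same data as a $G$-equivariant section of the associated bundle $q\colon P\times_A E_GA\to B$, so everything reduces to existence and homotopy uniqueness of $G$-equivariant sections of $q$. The next ingredient is the local structure of $q$: by Bierstone's condition, each $b\in B$ has a $G_b$-invariant neighbourhood $U_b$ over which $P\cong U_b\times A$ as a $G_b\times A$-space with $G_b$ acting via $h(u,a)=(hu,\rho_b(h)a)$. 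Passing to the balanced product, $q^{-1}(U_b)\cong U_b\times E_GA$, and a short computation shows that the $G_b$-action on the fibre $E_GA$ over $b$ is precisely the twisted action $z\mapsto hz\rho_b(h)^{-1}$ appearing in the statement. Thus by hypothesis every fibre of $q$ has contractible $G_b$-fixed points for the relevant local homomorphism.

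The remaining step, and the main obstacle, is an equivariant Dold-type theorem: a numerable $G$-equivariant fibre bundle all of whose fibrewise fixed-point spaces are contractible for the relevant isotropy representations admits a $G$-equivariant section, unique up to equivariant section-homotopy. I would prove this by patching local sections defined on members of a numerable cover using an equivariant partition of unity (obtained by averaging over the finite group $G$ from a non-equivariant one), then inductively deforming local sections to agree on overlaps using the contractibility of fibre fixed-points. The care required is that each intermediate homotopy be $G_b$-equivariant at every stage; this is exactly what Bierstone's local model enables, since over $U_b$ the relevant mapping space is $G_b$-homotopy-equivalent to the fibre fixed points. Granting this extension theorem, applying it to $q$ yields existence of a classifying map, and applying it to $q\times\mathrm{id}$ over $B\times I$ yields the homotopy uniqueness, which together give the universality asserted by the proposition.
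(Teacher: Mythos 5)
The paper does not prove this proposition at all: it is quoted verbatim from Lashof (cited there as \cite[Theorem 2.14]{lashof}), so there is no in-paper argument to compare against. Your outline is the standard proof underlying that reference, and its skeleton is sound: universality reduces to existence and relative homotopy uniqueness of $G\times A$-equivariant maps $P\to E_GA$, equivalently $G$-equivariant sections of $q\colon P\times_A E_GA\to B$; and your identification of the $G_b$-action on the fibre of $q$ over $b$ as the twisted action $z\mapsto hz\rho_b(h)^{-1}$ is correct (the twist by $\rho_b(h)^{-1}$ comes from the $A$-coordinate of the Bierstone trivialization, the left multiplication by $h$ from the $G$-action on $E_GA$ itself), so the hypothesis does give contractible fixed points of every fibre for the relevant isotropy data. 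Two points deserve to be made explicit rather than implicit. First, for the pullback correspondence to be well defined on $[B,B_GA]^G$ you also need the equivariant homotopy invariance of pullbacks of numerable $G\mhyphen A$-bundles (the equivariant covering homotopy theorem); this is standard but is a separate input from the section extension theorem. Second, the ``equivariant Dold-type theorem'' is the entire mathematical content of the proposition, and your description of it (patch local sections via a $G$-averaged partition of unity, then deform on overlaps using fibrewise fixed-point contractibility) is a plan, not a proof: the Dold argument is not a naive patching (there is no convex structure on the fibres to average sections), but rather a section extension property established by an induction over a numerable cover, carried out over the orbit space $B/G$ using the $G_b$-local models, and you need its relative form (extension from $B\times\{0,1\}$ to $B\times I$) to get injectivity. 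As a blind reconstruction of the cited result your proposal identifies the right reduction and the right key lemma, but the key lemma itself remains to be proved.
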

    This is related to the notion of classifying spaces for families of closed subgroups from equivariant homotopy theory as follows, see also \cite[VII.2]{alaska}.  Let  $\mathcal{F}$ be the family of closed subgroups $H \leq G \times A$ satisfying $H \cap (\{e\} \times A) = \{e\}$.  In other words $\mathcal{F}$ consists of those subgroups arising as $\{(h,\rho(h)) \mid h \in H\}$ for a subgroup $H \leq G$ and a homomorphism $\rho \colon H \to A$.
    There is then a universal $(G \times A)$-CW complex $E\mathcal{F}$ whose fixed points for any closed subgroups $H \leq G \times A$ satisfy
    \begin{enumerate}
    \item $E\mathcal{F}^H \simeq \ast$ for $H \in \mathcal{F}$,
    \item $E\mathcal{F}^H = \emptyset$ for $H \not\in \mathcal{F}$.
    \end{enumerate}
    In particular the group $A = \{e\} \times A$ itself acts freely and the quotient map
    \begin{equation*}
      E\mathcal{F} \to E\mathcal{F}/A
    \end{equation*}
    is a model for $E_G A \to B_G A$.

    Let us now consider the case $A = \Diff(M)$ for a smooth closed manifold $M$.  For a finite dimensional orthogonal $G$-representation $V$ and a closed smooth manifold $M$, let $\Emb(M,V)$ denote the space of embeddings $M\hookrightarrow V$ equipped with the $C^\infty$ topology.
    This space has an action of $G \times \Diff(M)$ where $\Diff(M)$ acts on the right by precomposition and $G$ on the left by postcomposition. 
    Choose a universal $G$-representation $\mathcal{U}_G$, i.e.\ an infinite dimensional representation containing an isomorphic copy of every finite dimensional representation, and let $$\Emb(M, \mathcal{U}_G)=\colim_{V\in s(\mathcal{U}_G)} \Emb(M,V),$$
    where $s(\mathcal{U}_G)$ denotes the poset of finite dimensional subrepresentations of $\mathcal{U}_G$.

    \begin{proposition}\label{B_GDiff}
      The $G$-space $\Emb(M, \mathcal{U}_G)/\Diff(M)$ is $G$-equivariantly weakly equivalent to the equivariant classifying space $B_G\Diff(M)$.  More precisely, there is a commutative diagram
      \begin{equation}\label{eq:14}
        \begin{tikzcd}
          E_G \Diff(M) \rar["\simeq_G"] \dar & \Emb(M, \mathcal{U}_G) \dar\\
          B_G \Diff(M) \rar["\simeq_G"] & \Emb(M, \mathcal{U}_G)/\Diff(M),
        \end{tikzcd}
      \end{equation}
      where the vertical maps are the quotient maps and the horizontal maps are $G$-equivariant weak equivalences.
    \end{proposition}

    \begin{proof}
      Let
      $H\leq G$ be a subgroup and $\rho\colon H\to \Diff(M)$ a homomorphism. Consider $\Emb(M, \mathcal{U}_G)$ under the left $H$-action defined by $\widetilde{\rho}\colon H \to G\times \Diff(M)$ given by $h\mapsto (h, \rho(h)^{-1})$. Then thefixed points $\Emb(M, \mathcal{U}_G)^H$ are $H$-equivariant embeddings $M\to V$ where the $H$-action on $M$ is given by $\rho$. Thus $\Emb(M, \mathcal{U}_G)^H$ is weakly contractible by the Mostow–Palais theorem (the equivariant analogue of Whitney embedding).  On the other hand, clearly $\Emb(M,\mathcal{U}_G)^H = \emptyset$ for any $H \leq G \times \Diff(M)$ containing an element of the form $(e,f)$ with $f \in \Diff(M) \setminus \{e\}$.  The general theory of classifying spaces for families then provides a $(G \times \Diff(M))$-equivariant weak equivalence
      \begin{equation*}
        E\mathcal{F} \to \Emb(M,\mathcal{U}_G),
      \end{equation*}
      and by taking quotients by $\Diff(M)$ we obtain the commutative diagram~\eqref{eq:14}.

      It remains to see that the bottom horizontal map in~\eqref{eq:14} is an equivariant weak equivalence.  Firstly, it is easy to see that the diagram~\eqref{eq:14} is \emph{cartesian}: for any $x \in B_G\Diff(M)$ with stabilizer $H \leq G$, the induced map of vertical fibers is an $H$-equivariant homeomorphism.  That is because both fibers are torsors for the centralizer
      \begin{equation*}
        C_{\Diff(M)}(\rho(H)),
      \end{equation*}
      where $\rho\colon H \to \Diff(M)$ is a homomorphism whose conjugacy class is determined by $x$.  The proof will now be finished if we can show that the maps
      \begin{equation}\label{eq:16}
        (\Emb(M, \mathcal{U}_G))^H \to (\Emb(M, \mathcal{U}_G)/\Diff(M))^H
      \end{equation}
      are quasi-fibrations for all $H \leq G$.  Indeed, the long exact sequences and the five-lemma then imply that the induced maps of homotopy groups
      \begin{equation*}
        \pi^H_n(B_G \Diff(M)) \to \pi_n^H(\Emb(M, \mathcal{U}_G)/\Diff(M))
      \end{equation*}
      are isomorphisms for all subgroups $H \leq G$, all $n \geq 1$, and all basepoints; the case $n=0$ is easily handled separately (the map~\eqref{eq:16} induces a surjection on $\pi_0^H$).

      To see that~\eqref{eq:16} is a quasi-fibration we first consider $V \in s(\mathcal{U}_G)$.  By the same argument as \cite[Proposition 5]{binz-fischer} we deduce that the quotient map $\Emb(M,V) \to \Emb(M,V)/\Diff(M)$ has $H$-equivariant local sections near any $H$-fixed point.  (By using the $H$-invariant Riemannian metric on $V$, the tubular neighborhood denoted $S^\epsilon_i$ in op.cit.\ can be chosen $H$-invariant.  The rest of the argument then applies verbatim.)  This is equivalent to satisfying Bierstone's condition, by \cite[Lemma 1.4]{lashof}, which implies that the induced map
      \begin{equation}\label{eq:18}
        (\Emb(M,V))^H \to (\Emb(M,V)/\Diff(M))^H
      \end{equation}
      is a fiber bundle, and in particular a Serre fibration.  In this situation, taking homotopy groups commutes with the filtered colimit over $V \in s(\mathcal{U}_G)$, since any inclusion $V \subset V'$ is sent to an inclusion of a closed subspaces.  Since the homotopy groups of~\eqref{eq:18} and of its point-set fibers fit into a long exact sequence for every $V$, the same holds in the colimit, proving that~\eqref{eq:16} is a quasi-fibration.
    \end{proof}        
    
    To relate this model of $B_G\Diff(M)$ with the discussion in Section~\ref{intro} we pass to $G$-fixed point spaces.  Up to weak equivalence they may be described in terms of groups of equivariant diffeomorphisms, as follows (proved by the argument of \cite[Theorem VII.2.4]{alaska}).
    \begin{lemma} For any subgroup $H\leq G$
          $$\big ( B_G(\Diff(M)) \big )^H\simeq \coprod\limits_{\rho} B\Diff^H (M,\rho), $$ 
          where the disjoint union is over conjugacy classes of homomorphisms $\rho\colon H \to \Diff(M)$ and $\Diff^H(M,\rho)$ is the group of equivariant diffeomorphisms, that is, the centralizer $C_{\Diff(M)}(\rho(H))$ of the image under $\rho$.
    \end{lemma}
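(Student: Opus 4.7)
The plan is to show, for each conjugacy class $[\rho] \in \mathrm{Hom}(H, \Diff(M))/\Diff(M)$, that a certain clopen piece of $(B_G\Diff(M))^H$ is a classifying space for $\Diff^H(M, \rho)$, and that these pieces exhaust the fixed-point set. Write $A = \Diff(M)$ and let $\pi\colon E_GA \to B_GA$ denote a chosen universal $G$-$A$-bundle. First I would attach an invariant $[\rho_b]\in\mathrm{Hom}(H,A)/A$ to each $b\in (B_GA)^H$: choose $z\in\pi^{-1}(b)$; since $hz$ and $z$ both lie in the $A$-torsor $\pi^{-1}(b)$, there is a unique $\rho_z(h)\in A$ with $hz = z\rho_z(h)$, and $\rho_z\colon H\to A$ is a homomorphism; replacing $z$ by $za$ conjugates $\rho_z$ by $a^{-1}$, so $[\rho_b]:=[\rho_z]$ is well-defined. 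Bierstone's condition (\cref{bierstone}) at $b$ shows that $[\rho_{b'}|_H]$ is constant for $b'\in U_b^H$, giving a clopen decomposition
\[
(B_GA)^H \;=\; \coprod_{[\rho]}\; (B_GA)^{H,[\rho]}.
\]

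Next, I fix a representative $\rho\colon H\to A$ and consider the \emph{$\rho$-twisted} $H$-action $h\cdot z := hz\rho(h)^{-1}$ on $E_GA$, with fixed-point space
\[
(E_GA)^\rho \;:=\; \{z\in E_GA : hz = z\rho(h) \text{ for all } h\in H\}.
\]
The right $A$-action restricts to a free action on $(E_GA)^\rho$ by those $a\in A$ satisfying $a\rho(h)=\rho(h)a$ for all $h\in H$, that is, by $C_A(\rho(H))=\Diff^H(M,\rho)$. I would then check that $\pi$ descends to a bijection $(E_GA)^\rho/\Diff^H(M,\rho) \to (B_GA)^{H,[\rho]}$: it is surjective, since for $b$ with $[\rho_b]=[\rho]$ one can adjust a preimage $z$ by an element of $A$ so that $\rho_z = \rho$ on the nose, and injective, since two such $z$'s differ by an element of $A$ commuting with $\rho(H)$.

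Finally, I would upgrade this bijection to a principal $\Diff^H(M,\rho)$-bundle using Bierstone's chart. At $b\in (B_GA)^{H,[\rho]}$ the principal $A$-bundle is $G_b$-equivariantly trivialized as $U_b\times A$ with $H$ acting by $(u,a)\mapsto(hu,\rho_b(h)a)$. Since $[\rho_b|_H]=[\rho]$ there exists $a_0\in A$ with $\rho_b|_H = a_0\rho a_0^{-1}$, and a direct computation gives $(U_b\times A)\cap (E_GA)^\rho = U_b^H\times a_0 C_A(\rho(H))$, which is the desired local trivialization of $(E_GA)^\rho \to (B_GA)^{H,[\rho]}$ as a principal $\Diff^H(M,\rho)$-bundle. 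By \cref{universal_criterion} the total space $(E_GA)^\rho$ is contractible, so the base is a classifying space $B\Diff^H(M,\rho)$; summing over $[\rho]$ yields the stated decomposition. I expect the main obstacle to be this local-triviality check: one must track the conjugating element $a_0$ and verify its compatibility with the residual right action, which is the bridge between the set-theoretic bijection and the principal bundle structure needed to deduce $B\Diff^H(M,\rho)$ from contractibility of $(E_GA)^\rho$.
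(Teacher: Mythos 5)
Your argument is correct, and it is worth noting that the paper does not prove this lemma at all: it is quoted from \cite[Theorem VII.2.4]{alaska}. So there is no ``paper proof'' to match; what you have written is essentially the standard proof of that cited theorem, and it is consistent with the machinery the paper later develops in its appendix (the unnamed lemma there showing that Bierstone's condition makes the twisted fixed-point map $p^{r(H)}\colon E^{r(H)}\to B^H$ a locally trivial bundle with fiber $A^{r(H)}=C_A(\rho(H))$ is exactly your local-triviality step, and \cref{long_exact_sequence} is the resulting identification of homotopy groups). All the individual verifications check out: $\rho_z$ is a homomorphism because the $G$- and $A$-actions commute, $\rho_{za}=a^{-1}\rho_z a$, the Bierstone chart forces local constancy of $[\rho_{b'}]$, and the computation $(U_b\times A)\cap (E_GA)^\rho=U_b^H\times a_0C_A(\rho(H))$ is right.

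One small point to tighten: as stated in the paper, \cref{universal_criterion} only gives the implication ``contractible twisted fixed points $\Rightarrow$ universal,'' whereas you invoke the converse for your chosen universal bundle $E_GA$. The converse is true and standard (it is an if-and-only-if in Lashof), but within the logic of this paper the cleanest fix is to run your argument on the explicit model $E_GA=\Emb(M,\mathcal{U}_G)$ of \cref{B_GDiff}, whose twisted fixed points $(E_GA)^\rho$ are exactly the spaces of $H$-equivariant embeddings and are weakly contractible by Mostow--Palais, with local sections supplied by \cref{slices}. With that substitution the proof is complete.
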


    As in the non-equivariant case, it is sometimes more natural to consider \emph{smooth} bundles, defined as follows.
    \begin{defn}
      Let $M$ be a smooth closed manifold.  A smooth $G$-equivariant bundle with fiber $M$ is a $G\mhyphen \Diff(M)$-bundle $p: E \to B$ with fiber $M$ with $E$ and $B$ smooth $G$-manifolds, such that Bierstone's condition is satisfied and that the local trivializations may be chosen smooth.

      A smooth $G$-equivariant manifold bundle is a disjoint union of such, over varying $M$.
    \end{defn}
    We also have an analogue of Ehresmann's fibration theorem (see \cite[1.12]{ulrich} for a proof).
    \begin{lemma}[Ehresmann's lemma]
        If $p\colon E\to B$ is a $G$-equivariant proper submersion of smooth $G$-manifolds, then it is a $G$-equivariant manifold bundle.
    \end{lemma}
    Smooth bundles in this sense have the same classification theory as principal $G\mhyphen\Diff(M)$-bundles, in the sense that isomorphism classes of smooth $G$-equivariant bundles with fiber $M$ over some smooth $G$-manifold $B$ are in natural bijection with the set $[B,B_G\Diff(M)]^G$ of equivariant homotopy classes of maps $B \to B_G \Diff(M)$.  The point is that continuous equivariant maps $B \to \Emb(M,\mathcal{U}_G)/\Diff(M)$ may be approximated by smooth ones, and that we may then pull back the ``universal'' smooth $G$-equivariant bundle with fiber $M$, modeled as
    \begin{equation*}
      \frac{\Emb(M,\mathcal{U}_G) \times M}{\Diff(M)} \stackrel{p}{\longrightarrow} \frac{\Emb(M,\mathcal{U}_G) }{\Diff(M)}.
    \end{equation*}
    This finishes our discussion of how to interpret the quotient $\Emb(M,\mathcal{U}_G)/\Diff(M)$ as a classifying space for $G$-equivariant smooth bundles.

    \subsection{Spaces of manifolds}
    \begin{defn}\label{def:Psi-d-V}
      For any finite dimensional inner product space $V$ and open subset
      $O \subset V$, let $\Psi_d(O)$ be the set of closed subsets $M\subset O$ which are smooth $d$-dimensional (not necessarily compact) manifolds without boundary. Consider these as spaces with the topology defined in \cite[Section 2.1]{monoids}, which is a $C^\infty$ variant of the compact-open topology (see \cite[Section 2]{chris_sch} for another approach to defining the topology). 
    \end{defn}
    
    The topology on $\Psi_d (O)$ has the following property. For a submersion of manifolds $\pi\colon E\to B$ and a proper embedding $\iota$ over B
    \[
    \begin{tikzcd}
        E \arrow[rr, hook, "\iota"] \arrow{dr}{\pi} &  &B\times O \arrow{dl}{} \\
        & B &
    \end{tikzcd}
    \]
    the associated map $f\colon B\to \Psi_d(O)$ given by $b\mapsto \{b\}\times O \cap \iota(E)$ is continuous. We call the map $f\colon B\to \Psi_d(O)$ smooth in the above case, and $E$ is the graph of $f$.

    If $\varphi\colon O'\to O$ is an open embedding then we have a continuous map $\Psi_d(O)\to \Psi_d(O')$, mapping $M$ to its inverse
    image under $\varphi$. In particular if $G$ is a finite group, $V$ an orthogonal $G$-representation, and $O \subset V$ a $G$-invariant open subset, we get an action of $G$ on $\Psi_d(O)$ where fixed points $\Psi(O)^G$ are sets of $G$-manifolds $M$ equivariantly embedded in $O$ as a closed subset. 
    
     Now let $B$ be a manifold with trivial action, $E$ a $G$-manifold with an equivariant submersion $\pi\colon E\to B$, $V$ a finite dimensional orthogonal $G$-representation, and $O \subset V$ a $G$-invariant open subset. If we have an equivariant embedding $\iota$ as above, we call the associated continuous map $B\to \Psi_d(O)^G$ smooth. 
     
     \begin{lemma}\label{smoothapprox}
         Let $V$ be a $G$-representation, $O\subset V$ a $G$-invariant open subset. Let $B$ be a smooth manifold and let $f\colon B\to \Psi_d(O)^G$ be a continuous map. Let $S\subset B\times O$ be open, and $T\subset B\times O$, both invariant subsets such that $\overline{S}\subset \textup{int}(T)$. Then there exist a homotopy $F\colon [0,1]\times B \to \Psi_d(O)^G$ starting at $f$, which is smooth on $(0,1]\times S\subset [0,1]\times B \times O$ and is constant outside $T$. Furthermore, if $f$ is already smooth on an open set $A\subset S$ then the homotopy can be assumed to be smooth on $[0,1]\times A$.
     \end{lemma}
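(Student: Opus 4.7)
The plan is to imitate the proof of the non-equivariant analogue of this lemma in \cite{monoids}, arranging every auxiliary choice in that argument to be $G$-invariant. Because $G$ is finite, any non-equivariant object of the right shape can be averaged over $G$ to produce an equivariant substitute, and the properties we need (being a smooth cutoff equal to $1$ on $\overline{S}$, being a partition of unity subordinate to a $G$-invariant cover, etc.) are preserved under this averaging whenever the reference sets involved are themselves $G$-invariant.

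Concretely, I would first produce a $G$-invariant smooth function $\phi\colon B\times O\to[0,1]$, equal to $1$ on a $G$-invariant neighborhood of $\overline{S}$ and supported in $\mathrm{int}(T)$, by averaging a non-equivariant such cutoff over $G$; invariance of $\overline{S}$ and $T$ guarantees that the averaged function still has the required properties. Next, for each point $(b_0,v_0)$ in the graph $\Gamma_f\subset B\times O$ of $f$ lying in $\overline{S}$, I would use an equivariant tubular neighborhood (slice) around the orbit $G\cdot(b_0,v_0)$ to obtain $G_{v_0}$-invariant local coordinates, and apply the local smoothing procedure of \cite{monoids} on the slice. In such a chart, the smoothing amounts to approximating a continuous section of a smooth normal bundle of a reference fiber by a smooth one, which is a linear operation; averaging the resulting section over $G_{v_0}$ therefore yields a $G_{v_0}$-equivariant smoothing, which one then extends by $G$-translation to the full $G$-invariant neighborhood $G\cdot U$. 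If $f$ is already smooth on an open set $A\subset S$, one arranges the local smoothing to coincide with $f$ on $A\cap U$ by interpolating with a further cutoff supported away from $A$. Using a $G$-invariant smooth partition of unity subordinate to the resulting $G$-invariant cover, the local smoothings glue to an equivariant smooth approximation $\tilde f$ defined on a neighborhood of $\overline{S}$, and $\phi$ is then used to interpolate between $f$ and $\tilde f$, producing a homotopy $F$ that is constant outside $T$ and smooth on $(0,1]\times S$.

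The main obstacle is the local smoothing step. Smoothing a map into $\Psi_d(O)$ is not a pointwise operation on $f$ but a genuine perturbation of the closed subset $\Gamma_f\subset B\times O$, so one must set up local parametrizations of $\Gamma_f$ that are compatible with the $G$-action on the orbit. The equivariant tubular neighborhood theorem, combined with the linearity of the approximation in the natural local model (continuous sections of a normal bundle), makes this possible, but care is needed near points with nontrivial stabilizer to ensure that the averaged smoothing still corresponds to a $G$-invariant smooth submanifold (equivalently, to a map into $\Psi_d(O)^G$) and not merely to a closed subset that happens to have some symmetries. Once the local $G_{v_0}$-equivariant models are in place, the averaging trick and the $G$-invariant partition of unity combine to produce the desired global homotopy essentially mechanically.
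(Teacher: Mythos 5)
Your proposal is correct and follows essentially the same route as the paper: both reduce to the smoothing argument of \cite[Lemma 2.17]{monoids} and observe that every auxiliary choice made there (tubular neighbourhoods, cutoffs, partitions of unity, and the local normal-bundle sections themselves) can be made $G$-invariant, using finiteness of $G$ to average where necessary. The paper packages this slightly more structurally, by noting that $\Psi_d(O)^G$ is itself locally modelled on the spaces $\Gamma_c^G(NM)$ of compactly supported equivariant sections, which is precisely what your slice-and-average construction exhibits.
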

    
    \begin{proof}
    The proof is analoguous to \cite[Lemma 2.17]{monoids}, we point out the observations needed to address the equivariant case.
        Following \cite[Definition 2.1]{monoids}, the topology on $\Psi_d(V)^G$ can be built as a limit from the compactly supported topology $\Psi_d(V)^G_{cs}$, which is an infinite dimensional manifold, 
     modelled on the vector spaces $\Gamma_c^G (NM)$ of compactly supported equivariant sections of the normal bundle at a point $M\in \Psi_d(V)^G$. 
     Note that the topology defined this way agrees with the subspace topology $\Psi_d(V)^G\subset \Psi_d(V)$.
    \end{proof}
    
    \begin{defn}
    For $W$ a subspace of $V$, let $V-W$ denote the orthogonal complement, and let $D_1(V-W)$ be the open unit disc. Then $\psi_d(V,W)$ is the subspace of $\Psi_d(V)$ consisting of $M$ such that $M\subset D_1(V-W)\times W$. When $V$ is a $G$-representation and $W$ a subrepresentation, $\psi_d(V,W)$ inherits an action of $G$.
    \end{defn}
    
    By a minor abuse of notation, the definition above identifies $M \subset V$ with its image under the canonical isomorphism $V \cong (V - W) \times W$.  Similar abuses will occur later.

\begin{lemma}
    There is an equivariant homotopy equivalence 
    $$\psi_d(\mathcal{U}_G, 0)=
        \colim_{V\in \mathcal{U}_G}\psi_d(V, 0)\simeq \coprod\limits_M B_G(\Diff(M)), $$
        where the disjoint union is over closed smooth $d$-dimensional manifolds, one in each diffeomorphism class.
\end{lemma}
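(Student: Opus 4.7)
The plan is to reduce to Proposition \ref{B_GDiff} by identifying $\psi_d(V, 0)$, for each finite-dimensional $G$-representation $V$, with a coproduct $\coprod_M \Emb(M, D_1(V))/\Diff(M)$ indexed by diffeomorphism classes of closed $d$-manifolds, and then passing to the colimit.

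An element $N \in \psi_d(V,0)$ is a closed subset of $V$ contained in the open unit disc $D_1(V)$; this forces $N$ to be bounded away from the unit sphere and hence compact. Moreover, the diffeomorphism type of $N$ is locally constant in the $C^\infty$ topology, since any sufficiently small $C^\infty$-perturbation of $N$ is the image of a small isotopy of $N$ obtained via the exponential map on a tubular neighborhood. This gives a clopen decomposition $\psi_d(V,0) = \coprod_M \psi_d(V,0)_M$, which is $G$-invariant since $gN \cong N$ as abstract manifolds. For each representative $M$, the map
\[
\pi_M\colon \Emb(M, D_1(V))/\Diff(M) \longrightarrow \psi_d(V,0)_M, \qquad [\iota] \longmapsto \iota(M),
\]
is a continuous $G$-equivariant bijection. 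Both sides are locally modelled, via the exponential map of a tubular neighborhood of $\iota(M) \subset V$, on the space of smooth sections of the normal bundle of $\iota(M)$, and the resulting charts match, so $\pi_M$ is an equivariant homeomorphism.

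The inclusion $\Emb(M, D_1(V)) \hookrightarrow \Emb(M, V)$ is a $G \times \Diff(M)$-equivariant deformation retract via an equivariant rescaling. After quotienting by $\Diff(M)$, taking the filtered colimit over $V \in s(\mathcal{U}_G)$, and commuting the colimit past the coproduct over $M$, we obtain an equivariant equivalence
\[
\psi_d(\mathcal{U}_G, 0) \;\simeq\; \coprod_M \Emb(M, \mathcal{U}_G)/\Diff(M),
\]
and Proposition \ref{B_GDiff} identifies the right-hand side equivariantly with $\coprod_M B_G\Diff(M)$.

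The main obstacle is the topological identification that $\pi_M$ is a homeomorphism and not merely a continuous bijection; this requires unpacking the definition of the topology on $\Psi_d(V)$ from \cite{monoids} and matching it with the $C^\infty$ topology on embedding spaces. The comparison is essentially non-equivariant and implicit in \cite{monoids}, while equivariance of $\pi_M$ comes for free from the construction. Once this is in hand, the remaining steps — rescaling, commuting colimits past coproducts, and invoking Proposition \ref{B_GDiff} — are formal.
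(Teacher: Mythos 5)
Your proposal is correct and follows essentially the same route as the paper, which simply asserts the homeomorphism $\psi_d(V,0)\cong \coprod_M \Emb(M,V)/\Diff(M)$ and invokes \cref{B_GDiff}; you merely unpack that assertion (compactness, local constancy of diffeomorphism type, the identification with section spaces of the normal bundle, and the rescaling from $D_1(V)$ to $V$) before passing to the colimit. The details you supply, including the caveat about matching the topology of \cite{monoids} with the $C^\infty$ topology on embedding spaces, are accurate.
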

\begin{proof}
    The space $\psi_d(V,0)$ is homeomorphic to
    \[
    \coprod\limits_M \Emb(M,V)/\Diff(M), 
    \]
    so by \cref{B_GDiff} the claim follows.
\end{proof}

    \subsection{The embedded cobordism category}\label{sec:embedd-cobord-categ}
    First
    we define the embedded cobordism category $\mathcal{C}_d(V)$ for $V$ a finite dimensional orthogonal $G$-representation. This is a category with strict $G$-action, i.e.\ for any $g\in G$ we have a functor $\mathcal{C}_d(V) \to \mathcal{C}_d(V)$, and composition gives equal functors.
    \begin{defn}\label{embedded_cobcat}
    For a finite dimensional orthogonal $G$-representation $V$, the topological category $\mathcal{C}_d(V)$ has object space
        $$\mathrm{Ob}(\mathcal{C}_d(V))=\psi_{d-1} (V,0). $$
        Morphisms are pairs
        $$(N,r)\in \mathcal{C}_d(M_1,M_2)\subset \psi_d(V\oplus 1, 1)\times \mathbf{R}, $$
        where $r\in \mathbf{R}_{> 0}$ and
        $N\in \psi_d(V\oplus 1, 1)$ is a manifold with 
        $N \cap (V\times (-\infty, \epsilon)) = M_1\times (-\infty, \epsilon)$, and
        $N \cap (V\times (r-\epsilon,+\infty)) = M_2\times (r-\epsilon,+\infty)$
        for some $\epsilon>0$.

        For
        fixed $\epsilon > 0$ this leads to a non-unital topological category $\mathcal{C}_d^\epsilon$ whose morphism spaces are topologized as subspaces of $\psi_d(V \oplus 1,1) \times \mathbf{R}$, and $\mathcal{C}_d$ itself is defined as the colimit of $\mathcal{C}_d^\epsilon$.
         
        The (strict) action of $G$ on $\mathcal{C}_d(V)$ comes from the action on $V$. Composition is given by concatenation.
    \end{defn}
    
    For  an equivariant isometric embedding of representations $V\to W$, we have a continuous equivariant functor $\mathcal{C}_d(V)\to \mathcal{C}_d(W)$, which on objects is given by the inclusion $\psi_{d-1} (V,0) \to \psi_{d-1} (W,0)$ and on morphisms given by the inclusion $\psi_d(V\oplus 1, 1)\to \psi_d(W\oplus 1, 1)$.

    The precise definition of the cobordism category $\mathcal{C}_d^G$ informally introduced in \cref{intro} is now as follows.

\begin{defn}\label{equivariant_cobordism_category}
  Choose a universal representation $\mathcal{U}_G$, and let $$\mathcal{C}_d(\mathcal{U}_G)=\colim_{\mathcal{U}_G} \mathcal{C}_d(V), $$
  where $\colim_{\mathcal{U}_G}$ denotes the colimit taken over the poset of finite dimensional subrepresentations of $\mathcal{U}_G$. 
  
  Finally, let 
  \begin{align*}
    \mathcal{C}_d^G= \left ( \mathcal{C}_d(\mathcal{U}_G) \right ) ^G
  \end{align*}
  be the fixed category.
\end{defn}

\begin{rmrk}\label{rmrk:non-unital}
  The cobordism category defined above is only a \emph{non-unital category}
    and its nerve is therefore only a semi-simplicial space, i.e.,\ without degeneracy maps.  The classifying space in Theorem~\ref{thm:mainthm} therefore denotes the ``thick'' geometric realization of the nerve.

  For later use, let us remark that our equivariant bordism category does have \emph{weak units}, namely the morphisms $(N,r)$ where $N = M \times \R$ is a cylinder: the maps of morphisms spaces defined by composing with a cylinder from the left or from the right are canonically homotopic to the identity.
\end{rmrk}
                
\subsection{The orthogonal \texorpdfstring{$G$-spectrum $\MTO_d$}{G-spectrum MTOd}}
\label{sec:orth-texorpdfstr-spe}
Let
$G$-$\mathrm{Top}$ denote the category whose objects are compactly generated topological $G$-spaces and whose morphisms are based equivariant maps and let $G$-$\mathrm{Top}_*$ be the corresponding based category.  Cartesian product and smash
product give these categories symmetric monoidal structures, so it makes sense to enrich over them.  In particular there is a $G$-$\mathrm{Top}_*$-enriched category $\mathrm{Top}_G$ whose objects are based compactly generated $G$-spaces and whose morphisms are all based (not necessarily equivariant).

Let $\mathcal{L}_G$ denote the category of finite dimensional orthogonal $G$-representations and isometric embeddings, enriched over $G$-$\mathrm{Top}$.

Following 
\cite[Section II.4]{MandellMay}, let $\mathcal{J}_G$ be the $G$-$\mathrm{Top}_*$-enriched category with 
$$\mathrm{Ob}(\mathcal{J}_G)=\mathrm{Ob}(\mathcal{L}_G)$$ 
and morphisms are the Thom space
 $$\mathcal{J}_G(V,W)=\mathrm{Th}\left ( 
 \begin{tikzcd} 
 (\textup{im}\varphi)^\bot \arrow{d}{} \\
 \mathcal{L}_G(V,W)
 \end{tikzcd} \right )
 $$
 where $(\textup{im}\varphi)^\bot$ denotes the vector bundle whose total space is $\{(\varphi, w) \in \mathcal{L}_G(V,W) \times V \mid w\in (\textup{im}\varphi)^\bot\}$. An orthogonal $G$-spectrum then is a $G$-$\mathrm{Top}_*$-enriched functor $\mathcal{J}_G\to \mathrm{Top}_{G}$. 
 
 \begin{defn}\label{MTOspectrum}
   The 
   orthogonal $G$-spectrum $\MTO_d$ is the enriched functor $\mathcal{J}_G\to \mathrm{Top}_G$ defined on objects as
 $$
 \MTO_d(V) = \mathcal{J}_G(\mathbf{R}^d, V) / O(d) \cong \mathrm{Th} \left (
  \begin{tikzcd} 
 \xi_V^\bot \arrow{d}{} \\
 \mathrm{Gr}_d(V) 
 \end{tikzcd} 
 \right ) ,
 $$
 where $\mathrm{Gr}_d(V)$ denotes the Grassmannian of $d$-planes in $V$, and $\xi^\bot$ is the orthogonal complement of the tautological bundle.
 
 Explicitly, a non-basepoint morphism $(\varphi, w)\in \mathcal{J}_G(V,W)$ is assigned the map $\MTO_d(V)\to \MTO_d(W)$ induced by Thomifying the bundle map
 \[
 \begin{tikzcd}
 \xi_V^\bot \arrow{d}{} \arrow{r}{\varphi(-)+w} & \xi_W^\bot \arrow{d}{} \\
 \mathrm{Gr}_d(V) \arrow{r}{\mathrm{Gr}_d(\varphi)} & \mathrm{Gr}_d(W).
 \end{tikzcd}
 \]
 \end{defn}

 \begin{defn}
    For an orthogonal $G$-spectrum $E$, and a universal representation $\mathcal{U}_G$ of $G$, let $s(\mathcal{U}_G)$ denote the poset of finite dimensional subrepresentations of $\mathcal{U}_G$. Then define
    $$
    \Omega^{\mathcal{U}_G} E = \colim\limits_{V\in s(\mathcal{U}_G)} \Omega^V E(V), 
    $$
    where $\Omega^V E(V)=\Map(S^V, E(V))$ is the $G$-space of pointed maps from the representation sphere. When the group and the universe is given in the context, we will write $\Omega^\infty$ instead of $\Omega^{\mathcal{U}_G}$.
 \end{defn}
 
 \begin{defn}
    For an orthogonal spectrum $E$ and a $G$-representation $V$, let $\sh_V E$ be the orthogonal spectrum given by $\sh_V E (W)=E(V\oplus W)$. Then denote $\Omega^{\mathcal{U}_G-V} E=\Omega^{\mathcal{U}_G} \sh_V E$
 \end{defn}

 \subsection{Proof of the main theorem}

 The
  equivalence in Theorem~\ref{thm:mainthm} will be deduced as a consequence of the following lemmas. 
\begin{proposition}\label{cobcat}
  There is an equivariant weak equivalence 
  $$B\mathcal{C}_d (V) \simeq \psi_d(V\oplus 1, 1).$$
\end{proposition}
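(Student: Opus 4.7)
The plan is to adapt the argument identifying $B\mathcal{C}_d \simeq \psi_d$ in the non-equivariant case \cite{monoids, gmtw} to the equivariant setting. I would introduce an auxiliary semi-simplicial $G$-space $D_\bullet(V)$ whose $k$-simplices are tuples $(W, a_0 < a_1 < \cdots < a_k)$, where $W \in \psi_d(V\oplus 1, 1)$ and there is some $\epsilon>0$ for which $W$ is cylindrical on each slab $V\times(a_i-\epsilon, a_i+\epsilon)$. Face maps forget one of the $a_i$. Because $G$ acts trivially on the last coordinate of $V\oplus 1$, the tuple $\vec a$ is automatically $G$-fixed, so $D_\bullet(V)$ is a semi-simplicial $G$-space, and there are two natural $G$-equivariant maps
\[
\psi_d(V\oplus 1, 1) \xleftarrow{\pi_1} |D_\bullet(V)| \xrightarrow{\pi_2} B\mathcal{C}_d(V),
\]
where $\pi_1$ forgets the cutting data and $\pi_2$ sends $(W, \vec a)$ to the chain of $k$ composable cobordisms obtained by restricting $W$ to the slabs $V\times[a_{i-1}, a_i]$ and translating. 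The goal is to show that both maps are $G$-weak equivalences.

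A map is a $G$-weak equivalence iff each $H$-fixed-point map is a weak equivalence for every $H\le G$, and fixed points commute with geometric realization for levelwise $H$-CW semi-simplicial spaces. It therefore suffices to establish both equivalences on $H$-fixed points. At level $k$, $D_k(V)^H$ parametrizes $(W, \vec a)$ with $W\in \psi_d(V\oplus 1, 1)^H$ an $H$-manifold equivariantly embedded in $V\oplus 1$, and $N_k\mathcal{C}_d(V)^H$ is the corresponding nerve of $H$-cobordisms. The non-equivariant arguments of \cite{monoids, gmtw} should then apply almost verbatim on each fixed-point space, provided that families can be equivariantly perturbed to become cylindrical near prescribed levels.

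The main obstacle is the equivalence $\pi_1^H$: given a family $K\to \psi_d(V\oplus 1, 1)^H$, one must lift it, up to homotopy, to $|D_\bullet(V)^H|$. I would apply \cref{smoothapprox} to reduce to smooth families, then use Sard's theorem applied to the height function $W\to \mathbf{R}$ (projection onto the last, $G$-trivial coordinate) to obtain a dense open set of regular values, noting that near any regular value $W$ is cylindrical after a small $H$-equivariant isotopy supported in a slab. Triviality of the $G$-action on the cutting axis removes any equivariant transversality obstruction, so Sard produces abundantly many $H$-invariant cutting levels. A standard cover-and-patch construction, as in \cite[§3]{monoids}, then assembles these local cylindrications into the global lift demonstrating that $\pi_1^H$ is a weak equivalence.

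The second equivalence $\pi_2^H$ is comparatively formal: the levelwise map $D_k(V)^H \to N_k\mathcal{C}_d(V)^H$ is a fiber bundle with contractible fiber $\mathbf{R}$, corresponding to the free choice of $a_0$ once the morphism lengths $r_i = a_i - a_{i-1}$ are fixed. This gives a levelwise weak equivalence of semi-simplicial $H$-spaces, hence a weak equivalence on realizations by the standard realization lemma. Combining the two equivalences in the zig-zag yields $B\mathcal{C}_d(V)^H \simeq \psi_d(V\oplus 1, 1)^H$ for every $H\le G$, which is the claimed equivariant weak equivalence.
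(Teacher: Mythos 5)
Your zig-zag $\psi_d(V\oplus 1,1)\xleftarrow{\pi_1}|D_\bullet(V)|\xrightarrow{\pi_2}B\mathcal{C}_d(V)$ is essentially the paper's proof: your $D_\bullet(V)$ is the nerve of the paper's poset $\mathcal{D}^\epsilon_d(V)$, your $\pi_2$ is the paper's functor $p$, and your $\pi_1$ is the composite of the paper's maps $i$ and $u$ (the paper inserts an intermediate poset $\mathcal{D}_d(V)$ in which the $a_i$ are only required to be regular values, separating the cylindrication step from the step of forgetting the cut levels, but the content is the same). The key equivariant observation --- that Sard's theorem and the cylindrication isotopies live entirely in the $G$-trivial last coordinate and hence preserve $H$-fixed points --- is exactly the one the paper relies on for \cref{314} and \cref{316}.

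The one step that is not correct as stated is your justification for $\pi_2$. The levelwise map $D_k(V)^H\to N_k\mathcal{C}_d(V)^H$ is not a fiber bundle with fiber $\mathbf{R}$: a point of $D_k(V)$ records the entire manifold $W$, including its behavior on the two half-infinite slabs below $a_0$ and above $a_k$, whereas the associated chain of cobordisms only remembers $W$ on $[a_0,a_k]$ (extended cylindrically). The fiber over a chain is therefore $\mathbf{R}$ times the space of all such extensions with prescribed cylindrical germs at $a_0$ and $a_k$, and its contractibility is something you must prove, not a free choice of basepoint. The paper (following \cite[Theorem 3.9]{monoids}, see \cref{315}) handles this by exhibiting an explicit levelwise homotopy inverse --- the inclusion of those $(M,0=a_0,\dots,a_k)$ that are cylindrical outside $[0,a_k]$ --- together with a stretching homotopy pushing the non-cylindrical tails off to $\pm\infty$; since the stretching again takes place in the trivial coordinate it is automatically equivariant. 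With that repair your argument goes through.
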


\begin{proposition}\label{delooplemma}
  When $\dim(V^G)\geq d$, there is an equivariant weak equivalence 
  $$\psi_d(V\oplus 1, 1) \xrightarrow{\sim} \Omega^V \psi_d (V\oplus 1, V\oplus 1).$$
\end{proposition}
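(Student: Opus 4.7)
My approach is to define a scanning (translation) map and verify that it is a weak equivalence on each $H$-fixed set, for $H\leq G$.

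For $M \in \psi_d(V\oplus 1, 1)$, so that $M\subset D_1(V)\times \mathbf{R}$, define $s(M)\colon S^V \to \psi_d(V\oplus 1, V\oplus 1)$ to be the based map $v\mapsto M - (v,0)$ for $v\in V$, with $\infty\mapsto \varnothing$. Because $M$ is confined to the bounded strip $D_1(V)\times \mathbf{R}$, the translate $M-(v,0)$ leaves every compact set of $V\oplus 1$ as $|v|\to \infty$, so $s(M)$ extends continuously to $S^V$. The resulting map
\[
s\colon \psi_d(V\oplus 1, 1) \longrightarrow \Omega^V \psi_d(V\oplus 1, V\oplus 1)
\]
is $G$-equivariant because $G$ acts on $V$ by isometries, and translation in $V$ is compatible with this action. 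Taking $H$-fixed points gives $s^H\colon \psi_d(V\oplus 1, 1)^H \to \mathrm{Map}^H_*(S^V, \psi_d(V\oplus 1, V\oplus 1))$, and by the standard criterion it suffices to show each $s^H$ is a weak equivalence.

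To show $s^H$ is a weak equivalence I would adapt the non-equivariant scanning/reconstruction argument of \cite[\S 3]{monoids}. Given a compact parameter space $K$ and an $H$-equivariant based family $K\to \mathrm{Map}^H_*(S^V, \psi_d(V\oplus 1, V\oplus 1))$, I would pull back the tautological family to produce a closed subset of $K\times V\times (V\oplus 1)$, apply \cref{smoothapprox} to make it smooth up to a small equivariant homotopy, and use equivariant transversality to extract an $H$-invariant smooth $d$-submanifold of $K\times D_1(V)\times \mathbf{R}$, realizing the desired class in $\psi_d(V\oplus 1, 1)^H$. The two composites with $s^H$ would then be homotoped to the identity by an equivariant straight-line isotopy that rescales the scanning parameter, just as in the non-equivariant case.

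The main obstacle is the equivariant transversality step in the reconstruction. Producing a smooth $H$-invariant $d$-manifold requires that for every $K\leq H$ the fixed stratum $(V\oplus 1)^K$ has dimension at least $d$, so that generic $K$-equivariant perturbations still yield a $d$-manifold on the fixed locus. This is precisely what the hypothesis $\dim V^G\geq d$ guarantees: since $V^G\subseteq V^K$, one gets $\dim(V\oplus 1)^K\geq d+1$ for every $K\leq G$. Without this, $s^H$ can fail to be surjective on $\pi_0$, since some classes of $H$-equivariant loops could only be represented by families whose fixed-point strata have too high a dimension. The remaining higher-homotopy assertions would then follow by running the same family argument over $K\times[0,1]^n$ instead of $K$, with all isotopy-extension and tubular-neighborhood steps taken $H$-equivariantly.
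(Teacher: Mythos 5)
Your map is the right one---it agrees with the paper's $\alpha_R$ (translation/scanning), and reducing to fixed points $H\leq G$ is the right framework---but the core of your argument is a gap rather than a proof. The step ``pull back the tautological family \ldots apply equivariant transversality to extract an $H$-invariant smooth $d$-submanifold of $K\times D_1(V)\times\mathbf{R}$'' is not an argument that can be carried out: a point of $\Omega^V\psi_d(V\oplus 1,V\oplus 1)$ is a $V$-parametrized family of (non-compact, unbounded) manifolds, and there is no tautological way to assemble such a family into a single manifold confined to $D_1(V)\times\mathbf{R}$; producing one is precisely the content of the theorem. Moreover, \cite[\S 3]{monoids} contains no such ``scanning/reconstruction by transversality'' argument to adapt: the non-equivariant delooping there is done one coordinate direction at a time via poset models and a group-completion argument for a topological monoid. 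That iteration is exactly what breaks equivariantly, since a non-trivial $G$-representation $V$ cannot be split into $G$-invariant lines, and your sketch does not address this.

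The paper's proof handles this in two stages: first it deloops with respect to trivial one-dimensional summands using monoid/poset models and the fact that the monoid of fixed points is grouplike (\cref{cobgroup}); then, for a non-trivial summand $R$, it runs a Segal-style local-to-global induction over an equivariant triangulation of $S(R)$, using a bar-construction homotopy-sheaf property (\cref{homotopy_sheaf}, \cref{hocart}) to show the scanning map is an equivalence, and finally identifies $\alpha_R$ with the induced map on homotopy fibers (\cref{homotopyfibers}). Your explanation of where $\dim(V^G)\geq d$ enters is also not the real one: the hypothesis is not a transversality condition on fixed strata but is used to show that the relevant monoids of $H$-fixed manifolds are connected/grouplike (every $H$-fixed manifold can be pushed to infinity in the trivial directions because, by Sard, the projection to $W^G$ misses a point when $\dim(W^G)>d$), which is what makes the group-completion and quasifibration arguments go through. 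To repair your proposal you would need to supply, at minimum, an equivariant replacement for the coordinate-by-coordinate delooping---which is the bulk of \cref{sec:general_deloop}.
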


\begin{proposition}\label{affine_grassmanian}
  The is an equivariant weak equivalence 
  $$\MTO_d(V) \xrightarrow{\sim} \psi_d(V,V).$$
\end{proposition}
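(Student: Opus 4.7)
I will define the map $\Phi \colon MTO_d(V) \to \psi_d(V, V)$ by sending the Thom basepoint to $\emptyset$ and $(P, v) \in \xi_V^\bot$ to the affine $d$-plane $P + v \subset V$. Continuity and $G$-equivariance are immediate: $g(P+v) = gP + gv$, and as $|v| \to \infty$ the plane $P + v$ escapes every compact subset of $V$ and converges to $\emptyset$ in the topology of $\psi_d(V,V)$, matching the Thom basepoint.

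By the equivariant Whitehead theorem it suffices to show $\Phi^H$ is a non-equivariant weak equivalence for each subgroup $H \leq G$. On fixed points, $MTO_d(V)^H$ is the Thom space of $(\xi_V^\bot)^H \to \mathrm{Gr}_d(V)^H$, the space $\psi_d(V, V)^H$ consists of closed $H$-invariant $d$-submanifolds of $V$, and $\Phi^H$ is again the affine-plane construction on $H$-invariant data. This reduces the claim to a non-equivariant statement: for any finite-dimensional inner product space $W$, the inclusion $\mathrm{Th}(\xi_W^\bot) \hookrightarrow \psi_d(W, W)$ of the affine Grassmannian is a weak equivalence. This is essentially \cite[Proposition 3.1]{gmtw} (see also the analogous discussion in \cite{monoids}); its proof constructs a deformation retraction onto the affine planes via a scanning-type argument, with \cref{smoothapprox} used to smooth parameterizing maps $B \to \psi_d(W, W)$ and thereby control continuity. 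I would obtain the equivariant lift by performing all scalings and translations $H$-equivariantly, which is straightforward since \cref{smoothapprox} already holds equivariantly and scalar multiplication commutes with the $H$-action.

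\textbf{Main obstacle.} The subtlety in the non-equivariant step is that the naive scaling $M \mapsto tM$ with $t \to \infty$ does not suffice. Pointwise it converges to the linear tangent plane $T_0 M$ when $0 \in M$ and to $\emptyset$ otherwise, so it retracts only onto the linear Grassmannian $\mathrm{Gr}_d(V) \cup \{\emptyset\}$, strictly smaller than $MTO_d(V) = \mathrm{Th}(\xi_V^\bot)$, and moreover this limit is not continuous in $M$. The correct retraction must first translate $M$ by a center chosen continuously from $M$ — for instance, the closest point of $M$ to the origin, which automatically lies in $(T_p M)^\bot$ and hence produces a well-defined element of $\xi_V^\bot$ — before scaling, using smoothing to cope with the degenerate cases where no unique closest point exists (e.g.\ a sphere centered at the origin). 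Making this choice simultaneously continuous and $H$-equivariant across all of $\psi_d(V, V)^H$ is the core technical challenge.
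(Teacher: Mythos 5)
Your map is the same as the paper's, and your diagnosis of the obstacle is accurate, but the proposal stops exactly where the real work begins: you name the problem of choosing a center continuously and equivariantly on all of $\psi_d(V,V)^H$ as "the core technical challenge" without resolving it, and the fix you gesture at (smoothing via \cref{smoothapprox}) would not work. Smoothing a family $B\to\psi_d(V,V)$ does not remove the degenerate manifolds from its image — a constant map to a round sphere centered at the origin is already smooth, and no perturbation argument will produce a globally continuous choice of closest point on the whole space. The missing idea, which is how the paper (following \cite[Lemma 6.1]{gal11}) proceeds, is that no global choice is needed: one covers $\psi_d(V,V)$ by the $G$-invariant open sets $U_0=\{M: 0\notin M\}$ and $U_1=\{M$ with a unique nondegenerate closest point to the origin$\}$, notes that every $M$ lies in $U_0\cup U_1$ (the degenerate cases such as spheres centered at $0$ all avoid the origin), shows $q^{-1}(U_0)\to U_0$ is an equivalence because both sides are equivariantly contractible, runs your translate-and-scale retraction only over $U_1$ and $U_{01}$ where the closest point is well defined, and then glues. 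The equivariance of the retraction on $U_1^H$ is exactly your observation that the unique closest point of an $H$-invariant manifold lies in $V^H$.

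Two smaller points. First, your intermediate claim that taking $H$-fixed points "reduces the claim to a non-equivariant statement about $\mathrm{Th}(\xi_W^\bot)\hookrightarrow\psi_d(W,W)$ for an inner product space $W$" is not correct as stated: $\psi_d(V,V)^H$ consists of $H$-invariant submanifolds of $V$, and $\mathrm{Gr}_d(V)^H$ of $H$-invariant planes, neither of which is of that form; one must rerun the non-equivariant argument checking $H$-equivariance throughout (as you say in the following sentence), not quote it. Second, the relevant non-equivariant reference is \cite[Lemma 6.1]{gal11} (or the corresponding statement in \cite{monoids}), not \cite[Proposition 3.1]{gmtw}, which is the cofiber-sequence statement used elsewhere in this paper.
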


These propositions will be proved in \cref{sec:first_deloop}, \cref{sec:general_deloop} and \cref{sec:affine_grassmanian} respectively.  The following is now an immediate consequence.

\begin{thm}\label{unstable_theorem}
The maps defined above result in a zig-zag of equivariant weak equivalences
\[
    B\mathcal{C}_d (V) \simeq \psi_d(V\oplus 1, 1) \xrightarrow{\sim} \Omega^V \psi_d (V\oplus 1, V\oplus 1)
    \xleftarrow{\sim} \Omega^V \MTO_d(V\oplus 1),
\]
for any orthogonal $G$-representation $V$ with $\dim(V^G) \geq d$. \qed
\end{thm}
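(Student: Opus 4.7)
The proof chains together the three preceding propositions. The first equivalence $B\mathcal{C}_d(V) \simeq \psi_d(V\oplus 1, 1)$ is exactly \cref{cobcat}; the middle equivalence $\psi_d(V\oplus 1, 1) \xrightarrow{\sim} \Omega^V \psi_d(V\oplus 1, V\oplus 1)$ is \cref{delooplemma}, and this is where the hypothesis $\dim(V^G) \geq d$ enters; the rightmost arrow is obtained by applying $\Omega^V = \mathrm{Map}_\ast(S^V, -)$ to the equivariant weak equivalence $MTO_d(V\oplus 1) \xrightarrow{\sim} \psi_d(V \oplus 1, V \oplus 1)$ provided by \cref{affine_grassmanian} with $V$ there replaced by $V \oplus 1$. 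The map in question is the concrete Pontryagin--Thom zig-zag described just before the three propositions, and one should additionally check that the stated maps agree up to homotopy with those arising from the three propositions; this is straightforward from unwinding the Pontryagin--Thom construction and the scanning map underlying \cref{delooplemma}.

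The only step requiring more than direct citation is verifying that $\Omega^V$ sends the third equivariant weak equivalence to another equivariant weak equivalence. In general $\mathrm{Map}_\ast(S^V, -)$ fails to be homotopy invariant without fibrancy hypotheses: for a map $X \to Y$ inducing a weak equivalence on each $H$-fixed point space, to conclude that $\mathrm{Map}^H_\ast(S^V, X) \to \mathrm{Map}^H_\ast(S^V, Y)$ is a weak equivalence, one filters $S^V$ by its $H$-CW skeleton and reduces cell by cell to the statement that $X^K \to Y^K$ is a weak equivalence between spaces of the homotopy type of CW complexes, for each $K \leq H$. I would establish the required CW condition separately for the two spaces involved: $MTO_d(V \oplus 1)$ is the Thom space of an equivariant vector bundle over the smooth $G$-manifold $\mathrm{Gr}_d(V \oplus 1)$, whose $H$-fixed points are Thom spaces of bundles over fixed-point Grassmannians and hence of CW type; and the $H$-fixed points of $\psi_d(V \oplus 1, V \oplus 1)$ admit the infinite-dimensional manifold charts indicated in the proof of \cref{smoothapprox}, modelled on the topological vector spaces $\Gamma_c^H(NM)$ of compactly supported equivariant sections of the normal bundle.

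The main obstacle is precisely this fibrancy/CW-type verification, which is a standard but genuinely nontrivial piece of point-set equivariant homotopy theory. If the direct check becomes cumbersome, an alternative I would pursue is to replace each space by an equivariant CW approximation before applying $\Omega^V$, and to observe that the relevant maps factor through these approximations up to equivariant homotopy; the necessary point-set considerations are presumably collected in the appendix referenced in the proof of \cref{B_GDiff}.
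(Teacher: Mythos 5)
Your proposal is correct and is essentially the paper's own (largely implicit) argument: \cref{unstable_theorem} is obtained by simply concatenating \cref{cobcat}, \cref{delooplemma} (where the hypothesis $\dim(V^G)\geq d$ enters) and $\Omega^V$ applied to \cref{affine_grassmanian}. The fibrancy/CW-type verification you flag as the main obstacle is not actually needed: since $S^V$ is a finite $H$-CW complex for every $H\leq G$ and every topological space is fibrant, the cell-by-cell induction reduces to the standard fact that $\mathrm{Map}(D^n,-)$ and $\mathrm{Map}(S^{n-1},-)$ preserve weak equivalences between arbitrary spaces, so $\Omega^V$ automatically carries equivariant weak equivalences to equivariant weak equivalences with no hypotheses on $MTO_d(V\oplus 1)$ or $\psi_d(V\oplus 1,V\oplus 1)$.
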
         
\begin{thm}\label{centralthm}
 The maps above induce an equivariant equivalence
    $$
    B\mathcal{C}_d(\mathcal{U}_G) \simeq \Omega^{\mathcal{U}_G-1} \MTO_d.
    $$
\end{thm}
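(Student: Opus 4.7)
The plan is to construct the map in \cref{centralthm} by taking the colimit over $V \in s(\mathcal{U}_G)$ of the zig-zag of \cref{unstable_theorem}. First I would check that each of the functors $V \mapsto B\mathcal{C}_d(V)$, $V \mapsto \psi_d(V\oplus 1, 1)$, $V \mapsto \Omega^V \psi_d(V\oplus 1, V\oplus 1)$, and $V \mapsto \Omega^V MTO_d(V\oplus 1)$ is equivariantly natural in $V$ with respect to the stabilization maps induced by equivariant isometric embeddings $V \hookrightarrow W$, and that these stabilization maps are closed equivariant cofibrations on both the $\psi_d$-spaces and the nerves of $\mathcal{C}_d(V)$. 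Given this naturality, the colimit of the zig-zag in $G$-Top is
$$
B\mathcal{C}_d(\mathcal{U}_G) \xleftarrow{\sim} \psi_d(\mathcal{U}_G\oplus 1, 1) \xrightarrow{} \Omega^{\mathcal{U}_G} \psi_d(\mathcal{U}_G\oplus 1, \mathcal{U}_G\oplus 1) \xleftarrow{} \Omega^{\mathcal{U}_G-1} MTO_d,
$$
and the composite defines the map asserted in the theorem.

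Next I would verify that each arrow of this colimit zig-zag is an equivariant weak equivalence by testing on $H$-fixed points for every subgroup $H \leq G$. The crucial observation is that the subposet
$$s_{\geq d}(\mathcal{U}_G) = \{V \in s(\mathcal{U}_G) : \dim V^G \geq d\}$$
is cofinal in $s(\mathcal{U}_G)$, since any $V$ can be enlarged by adjoining $d$ copies of the trivial representation, which sits inside the universe $\mathcal{U}_G$. Hence both sides of the desired equivalence, as well as the intermediate stages of the zig-zag, can be computed as colimits along $s_{\geq d}(\mathcal{U}_G)$.

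For a finite subgroup $H \leq G$, the functor $(-)^H$ commutes with filtered colimits along closed $H$-inclusions, as well as with geometric realization of nerves built from such spaces. Combining these interchanges, taking $H$-fixed points of the colimit zig-zag is naturally identified with the colimit over $V \in s_{\geq d}(\mathcal{U}_G)$ of the $H$-fixed-point zig-zags at each finite stage. By \cref{unstable_theorem}, those finite-stage zig-zags are $G$-equivariant weak equivalences, and hence weak equivalences on $H$-fixed points for every $H \leq G$, because $\dim V^H \geq \dim V^G \geq d$. Filtered colimits of weak equivalences along closed cofibrations are again weak equivalences, which concludes the argument.

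The hard part, I expect, will be the point-set bookkeeping that justifies commuting $(-)^H$, filtered colimits, and geometric realization simultaneously: specifically, confirming that all the relevant stabilization maps are genuine closed equivariant cofibrations and that the topologies of $\psi_d(V, W)$ and of the nerves $N_k \mathcal{C}_d(V)$ behave well under these operations, of the same flavor as the technical conditions alluded to in the appendix referenced after \cref{B_GDiff}. Once those conditions are in place, \cref{centralthm} will follow formally from cofinality and the pointwise equivalences of \cref{unstable_theorem}.
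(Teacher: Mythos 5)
Your proposal is correct and follows the same route as the paper: the paper's proof is precisely the observation that the zig-zag of \cref{unstable_theorem} is compatible with the stabilization maps induced by isometric embeddings $V\to W$, so that passing to the colimit over $s(\mathcal{U}_G)$ yields the stated equivalence. Your additional points --- cofinality of the subposet where $\dim V^G\geq d$, and the interchange of $H$-fixed points with filtered colimits along closed equivariant inclusions --- are exactly the implicit ingredients the paper leaves to the reader.
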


\begin{proof}
    The equivalences in \cref{unstable_theorem} are compatible under isometric embeddings $V\to W$, and hence taking colimits we get the equivalence in our theorem.
\end{proof}

The equivariant weak equivalence in \cref{centralthm} implies our Theorem~\ref{thm:mainthm} by taking fixed points.

\section{The classifying space of the equivariant cobordism category}\label{sec:first_deloop}
As the first step in the proof of \cref{centralthm} we show \cref{cobcat}: that for any $G$-representation $V$ there is an equivariant equivalence 
\begin{align}\label{htypecob}
    B\mathcal{C}_d (V)\simeq \psi_d(V\oplus 1, 1).
\end{align} 
The proof of this is very similar to the non-equivariant case (\cite{monoids}, Section 3). We outline the steps of the proof.
In \cref{models} we introduce the posets $\mathcal{D}_d(V)$ and $\mathcal{D}^\epsilon_d(V)$ and functors (of non-unital categories)
$$\mathcal{C}_d(V)\xleftarrow{} \mathcal{D}^\epsilon_d(V) \xrightarrow{} \mathcal{D}_d(V)$$
that induce level-wise equivariant equivalences of nerves, as shown in \cref{outlines}. Finally we show that the forgetful map $B\mathcal{D}_d(V)\to \psi_d(V\oplus 1, 1)$ is an equivariant equivalence.  Recall that we work with thick geometric realization, so level-wise equivalence induces equivalence of the realizations.

\subsection{Models for the equivariant cobordism category}\label{models}

Let $V$ be a finite dimensional $G$-representation. The following definitions agree with \cite[Definition 3.8]{monoids} and \cite[Theorem 3.9]{monoids} when $G = \{e\}$ and $V = \R^n$.

\begin{defn}
  Let $\mathcal{D}_d(V)$ be the following topological poset. Objects are pairs $(M,a)$ such that $M\in \psi_d(V\oplus 1,1)$ and $a\in \mathbf{R}$ is a regular value of the map $M\to \mathbf{R}$ induced by the projection $\pi_1\colon V\oplus 1 \to 1$. It is given the subspace topology $\mathrm{Ob}(\mathcal{D}_d(V))\subset \psi_d(V\oplus 1,1)\times \mathbf{R}$.

  The ordering is defined by $(M,a) < (M',a')$ if and only if $M = M'$ and $a < a'$. \end{defn}

We say that $M\in \psi_d(V\oplus 1,1)$ is cylindrical in the interval $(a,b)$ if there is $N\in \psi_{d-1}(V)$ such that 
\[
M\cap \pi_1^{-1}(a,b) = N\times (a,b). 
\]

\begin{defn}
  For $\epsilon > 0$, let $\mathcal{D}^\epsilon_d(V)$ be the topological poset defined similarly to $\mathcal{D}_d(V)$, but with objects the subspace of pairs $(M,a)\in \psi_d(V\oplus 1,1)\times \mathbf{R}$ such that $a$ is a regular value of $M\to \mathbf{R}$ and $M$ is cylindrical in $(a-\epsilon, a+\epsilon)$.
  \end{defn}

When $V$ is a $G$-representation, the posets $\mathcal{D}^\epsilon_d(V)$ and $\mathcal{D}_d(V)$ become posets with $G$-action. There is a natural inclusion of posets $i\colon\mathcal{D}^\epsilon_d(V)\to \mathcal{D}_d(V)$.

\begin{defn}
    Define a natural functor $p\colon \mathcal{D}^\epsilon_d(V)\to \mathcal{C}_d(V)$ as follows. On objects it maps $(M,a)$ to $\pi_1^{-1}(a)\cap M\in \psi_{d-1}(V)$. A morphism $(M,a) < (M,a')$ gets mapped to 
    $$
    \big((-\infty,0]\times (\pi_1^{-1}(a)\cap M) \big)
    \cup \big( \pi_1^{-1}(a,a')\cap M-a\mathbf{e}_1 \big)
    \cup \big( [0,\infty) \times (\pi_1^{-1}(a')\cap M) \big),  $$
    where $\mathbf{e}_1$ denotes the unit vector in the direction of the trivial representation $1$.
\end{defn}

\subsection{Equivalence of models}\label{outlines}

\begin{lemma}\label{314}
  The functor 
  $i\colon \colim_{\epsilon \to 0} \mathcal{D}^\epsilon_d(V)\to \mathcal{D}_d(V)$ induces a level-wise equivariant equivalence on the nerves.
\end{lemma}

\begin{proof}
  We
  prove this by the method of \cite[Lemma 3.4]{monoids} and refer there for more details.  We briefly recall the construction given there, in order to observe that it is compatible with $G$-action.

  Pick once and for all a smooth function $\lambda: \R \to \R$ with $\lambda(s) = 0$ for $|s| \leq 1$ and $\lambda(s) = s$ for $|s| \geq 2$.  Suppose given a point $x = (M, a_0, \dots, a_p) \in N_p D_d(V)^H$ for which $a_i > a_{i-1} + 2\epsilon$ for all $i = 1, \dots, p$.  Then for $t \in [0,1]$ we let
  \begin{equation*}
    f_{t,x}(s) =
    s + t \sum_{i = 0}^p (\epsilon \lambda(\frac{s - a_i}\epsilon) - s).
  \end{equation*}
  This defines a smooth homotopy from the identity to a non-decreasing function $\R \to \R$ which is constant near each $a_i$.  Then
  \begin{equation*}
    t \mapsto \big((\mathrm{Id}_V \times f_{t,x})^{-1} M,a_0, \dots, a_p)
  \end{equation*}
  defines a path from $x$ to a point in $\mathcal{D}^\epsilon_d(V)$.  It is clear that the entire path consists of points fixed by $H < G$ if $x$ is fixed by $H$.  It also depends continuously on $(t,x)$, so we have almost defined a deformation retraction.  The only problem is that this construction only works on the open subspace where the $a_i$'s stay apart by at least $2\epsilon$.
  
  Suppose now given a lifting problem
  \begin{equation*}
    \begin{tikzcd}
        \partial D^k \arrow[d, hook] \arrow[r] & \colim_\epsilon N_p D_d^\epsilon(V)^H \arrow[d, hook]\\
        D^k \arrow[r,"f"] \arrow[ur, dashed] & N_p D_d(V)^H
    \end{tikzcd}
  \end{equation*}
  for some $k  \in \mathbf{N}$.  By compactness of $\partial D^k$ the top map factors through some finite $\epsilon$, and by compactness of $D^k$ we may arrange that the $a_i$'s stay at least $2\epsilon$ apart in all values of $f$.  The homotopy described above therefore shows that the relative homotopy groups vanish.
\end{proof}

\begin{lemma}\label{315}
     The functor $p\colon \colim_{\epsilon \to 0} \mathcal{D}^\epsilon_d(V)\to \mathcal{C}_d(V)$ induces a level-wise equivariant equivalence on the nerves.   
\end{lemma}

\begin{proof}
  An equivariant homotopy inverse on simplicial nerves is given by the inclusion $N_p \mathcal{C}_d(V) \to \colim_\epsilon N_p \mathcal{D}^\epsilon_d(V)$ which sends $((M_1,a_1), \dots, (M_p,a_p))$ to the point $(M,0,a_1,\dots, a_p)$, where $(M,a) = (M_1,a_1) \circ \dots \circ (M_p,a_p)$ denotes the composition in $\mathcal{C}_d(V)$.

  The composition $N_p \mathcal{C}_d(V) \to \colim_\epsilon N_p \mathcal{D}^\epsilon_d(V) \to N_p \mathcal{C}_d(V)$ is the identity, and identifies $N_p \mathcal{C}_d(V)$ with the subspace of $\colim_\epsilon N_p \mathcal{D}^\epsilon_d(V)$ consisting of $(M,a_0,\dots, a_p)$ satisfying that $a_0 = 0$ and that $M$ is cylindrical outside $[0,a_p]$.  A deformation retract to that subspace may be defined as in the proof of \cite[Theorem 3.9]{monoids}.  We refer there for a more formal description of the homotopy, but let us describe it in words in order to convince 
  ourselves that it is an equivariant homotopy.  From a general $(M,a_0,\dots, a_p)$ we first parallel translate in the chosen $G$-fixed $\R$-direction so that $a_0$ becomes 0 (and $a_p$ becomes the old $a_p - a_0$), then push the parts of $M$ contained in $\pi_1^{-1}((-\infty,0])$ and $\pi_1^{-1}([a_p,\infty))$ off to $\pm \infty$ so that $M$ becomes cylindrical outside $[0,a_p]$.
\end{proof}

\begin{lemma}\label{316}
     The forgetful map $u\colon B\mathcal{D}_d(V)\to \psi_d(V\oplus 1, 1)$ taking a point 
     $$(M, a_0,\ldots, a_p, t_0,\ldots, t_p)\in N_p\mathcal{D}_d(V)\times \Delta^p \subset B\mathcal{D}_d(V)$$
     to $M\in \psi_d(V\oplus 1, 1)$ is an equivariant weak equivalence.
\end{lemma}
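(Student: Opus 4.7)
I will prove that $u$ is an equivariant weak equivalence by showing that for each subgroup $H \leq G$, the induced map on fixed points
$$u^H : B(\mathcal{D}_d(V)^H) \to \psi_d(V\oplus 1, 1)^H$$
is a weak equivalence of spaces; this suffices by the standard characterization of equivariant weak equivalences, and uses that thick geometric realization commutes with finite-group fixed points. A key simplification is that $G$ acts trivially on the trivial representation summand $1$, so the projection $\pi_1 : V\oplus 1 \to \mathbf{R}$ has trivial target action; consequently, regular values of $\pi_1|_M$ are automatically $H$-invariant, and $\mathcal{D}_d(V)^H$ is simply the topological poset of pairs $(M, a)$ with $M \in \psi_d(V\oplus 1,1)^H$ and $a \in \mathbf{R}$ a regular value of $\pi_1|_M$.

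To show each $u^H$ is a weak equivalence, I would solve the lifting problem
\[
\begin{tikzcd}
\partial D^k \arrow[r] \arrow[d, hook] & B(\mathcal{D}_d(V)^H) \arrow[d, "u^H"] \\
D^k \arrow[r, "f"] \arrow[ur, dashed] & \psi_d(V\oplus 1,1)^H
\end{tikzcd}
\]
for each $k \geq 0$, up to fiberwise homotopy. The essential input is Sard's theorem: for each $x \in D^k$ the set of regular values of $\pi_1|_{f(x)}$ is open and dense in $\mathbf{R}$, and being a regular value is an open condition on the pair $(M,a) \in \psi_d(V\oplus 1,1) \times \mathbf{R}$. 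By compactness I would choose a finite open cover $\{U_j\}$ of $D^k$ together with real numbers $a_j$ such that each $a_j$ is a regular value of $\pi_1|_{f(x)}$ for every $x \in U_j$, together with a subordinate partition of unity $\{\rho_j\}$. After ordering the $a_j$ increasingly, the tuple $(f(x), (a_{j_0} \leq \cdots \leq a_{j_m}), (\rho_{j_0}(x), \ldots, \rho_{j_m}(x)))$ defines a point of $N_m(\mathcal{D}_d(V)^H) \times \Delta^m$ lifting $f(x)$, giving a candidate lift $\tilde f$.

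The main obstacle is matching this construction to the prescribed boundary lift on $\partial D^k$. I would handle this by the simplicial-combinatorial trick underlying \cite[Section 3]{monoids}: enlarge the collection $\{a_j\}$ to include the finitely many regular values appearing in the boundary lift, and adjust the partition of unity so that its restriction to $\partial D^k$ agrees with the given weights, possibly after a small fiberwise homotopy sliding weights within the convex simplex $\Delta^m$ (which does not leave the fiber of $u^H$). Conceptually, this amounts to the fact that the fiber $(u^H)^{-1}(M)$ is the realization of the nerve of the totally ordered set of regular values of $\pi_1|_M$, which is nonempty by Sard and weakly contractible; the partition-of-unity argument is the parametrized version of this contractibility. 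The equivariant case introduces no new difficulties beyond the initial observation that regular values of $\pi_1$ are automatically $H$-invariant, so the argument of \cite[Section 3]{monoids} adapts verbatim.
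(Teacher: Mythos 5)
Your proposal is correct and follows the same route as the paper: reduce to fixed points for each $H\leq G$, solve the lifting problem $\partial D^q \to D^q$ against $u^H$ using Sard's theorem and a partition of unity to choose regular values, and observe that the equivariance is harmless because the regular values are chosen in the trivial summand $1$, so the non-equivariant argument of \cite[Theorem 3.10]{monoids} applies verbatim. Your additional remarks on contractibility of the fibers and matching the boundary lift are exactly the content of the cited non-equivariant result.
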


\begin{proof}
    The proof is done by showing that for any subgroup $H\leq G$ and any $q\in \mathbf{N}$, we can solve the following lifting problem.
    \[
    \begin{tikzcd}
        \partial D^q \arrow[r] \arrow[d] & BD_d(V)^H \arrow[d] \\
        D^q \arrow[r] \arrow[ru, dashed] & \psi_d(V\oplus 1, 1)^H
    \end{tikzcd}
    \]
    The proof of \cite[Theorem 3.10]{monoids} applies, since it only involves choices of regular values in the trivial summand.
\end{proof}

Lemmas \ref{314}, \ref{315} and \ref{316} now imply that there is a zig-zag of equivariant equivalences
\[
B\mathcal{C}_d(V)\xleftarrow{p} B\mathcal{D}^\epsilon_d(V) \xrightarrow{i} B\mathcal{D}_d(V)\xrightarrow{u} \psi_d(V\oplus 1, 1), 
\]
proving \eqref{htypecob}.

\section{Delooping}\label{sec:general_deloop}
The main step in showing \cref{centralthm} will be the proof of \cref{delooplemma} given in this section.  To establish the weak equivalence we first write down a map.

\begin{defn}
    Let $V$ a finite dimensional orthogonal representation of $G$, and let $W$ and $R$ be subrepresentations of $V$ that are orthogonal to each other. 
    Define equivariant maps
\begin{align} \label{looping}
    \psi_d(V,W)\xrightarrow{\alpha_R} \Omega^R \psi_d (V, W+R)
\end{align}
as follows. For $M\in \psi_d (V,W)$, $\alpha_R (M)$ is given by
\begin{align*}
    r\in R  & \mapsto M+r\in \psi_d (V,W+R) \\
    \infty & \mapsto \varnothing\in \psi_d (V,W+R),
\end{align*} 
which defines a continuous, based, and equivariant map $R\cup \{\infty \} =S^R \to \psi_d (V,W+R)$.
\end{defn}

The maps $\alpha_R(M)$ are continuous because of the compact-open nature of the topology on $\psi_d (V,W+R)$.  These maps are compatible in the sense that if $W$, $R$ and $R'$ are pairwise orthogonal subrepresentations of $V$, then the following diagram commutes
\[
\begin{tikzcd}
    \psi_d(V,W) \arrow[r, "\alpha_R"] \arrow[dr, "\alpha_{R\oplus R'}"'] &\Omega^R \psi_d(V,W+R) \arrow[d, "\Omega^R \alpha_{R'}"] \\
    & \Omega^{R\oplus R'} \psi_d(V,W+R+R').
\end{tikzcd}
\]

We 
show that \eqref{looping} is an equivariant equivalence, first in the case when $R$ is trivial, and then in \cref{nontriv} consider non-trivial $R$, assuming $W$ contains enough trivial summands.  Equivariant delooping by a trivial representation is not much different from the corresponding non-equivariant steps in \cite{monoids}, we outline this in subsection \ref{sec:triv-repr}.  Delooping by a non-trivial representation is harder, and will again use bar construction methods.  The assumption $\dim(W^G) > d$ is used in Lemma~\ref{forgetful_lemma} to see that certain bar constructions have the expected equivariant homotopy types.  It is then used again in Lemma~\ref{lemma-where-high-dim-is-used} to see that a certain monoid is group-like (in the equivariant sense, that the fixed-point monoid for any subgroup of $G$ is group-like).

    \subsection{Trivial representation}\label{sec:triv-repr}
    First we show that \eqref{looping} is an equivariant equivalence when $R$ is a one dimensional trivial representation. The proof is very similar to the non-equivariant case, although more care is needed to treat connected components. 
    
    Throughout this section $W$ is a trivial representation, $\dim(W)\geq 1$ and $R$ is a fixed trivial one dimensional subrepresentation of $V$ orthogonal to $W$. 
    Choose a unit vector $\mathbf{e}_R$ spanning $R$, and let $\pi_R\colon V\to \mathbf{R}$ denote projection onto $R\cong \mathbf{R}$ (so $\pi_R(\mathbf{e}_R) = 1$). 
    
    Similarly to \cref{models}, we start by introducing topological monoid and poset models $\mathcal{M}_d(V,W)$ and $\mathcal{P}_d(V,W)$ of $\psi_d(V,W)$, and our statement will follow from equivariant equivalences 
    $$
    B\mathcal{M}_d(V, W) \xleftarrow{\simeq} B\mathcal{P}_d (V, W) \xrightarrow{\simeq} \psi_d(V, W+R)
    $$
    and 
    $$
    \psi_d(V, W) \xrightarrow{\simeq} \mathcal{M}_d(V, W) \xrightarrow{\simeq} \Omega B \mathcal{M}_d(V, W).
    $$
    \begin{defn}
      Let $\mathcal{M}_d(V,W)$ be the topological monoid whose space of elements is the subspace
      $\mathcal{M}_d(V,W)\subset \psi_d(V,W+R)\times \mathbf{R}_{> 0}$ consisting of pairs $(M,a)$ such that $M\subset \pi_R^{-1} ((0,a))$. The composition of $(M,a)$ and $(M',a')$ is given by $(N,b)$ where 
        \[ N=M\cup (M'+a\mathbf{e}_R)\quad and  \quad b=a+a'. \]
    \end{defn}
    
    We have an injective map $\psi_d(V,W)\hookrightarrow \mathcal{M}_d(V,W)$ taking $M\in \psi_d(V,W)$ to  $(M+\mathbf{e}_R,2)\in \mathcal{M}_d(V,W)$. 
    
    \begin{lemma}
        The injection $\psi_d(V,W)\hookrightarrow \mathcal{M}_d(V,W)$ is an equivariant homotopy equivalence.
    \end{lemma}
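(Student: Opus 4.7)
The strategy is to exhibit an explicit equivariant homotopy inverse $r\colon \mathcal{M}_d(V,W)\to \psi_d(V,W)$ and to check that both composites are equivariantly homotopic to the identity through families of affine automorphisms of $V$. The key structural observation is that $R$ is a trivial one-dimensional subrepresentation orthogonal to $W$, so any affine map of $V$ that fixes $W$ pointwise, scales the orthogonal complement $V-W-R$ uniformly, and acts affinely on the $R$-coordinate is automatically $G$-equivariant. This gives us plenty of equivariant moves to use without any fuss.

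For the retraction, given $(M,a)\in \mathcal{M}_d(V,W)$, let $\sigma_a\colon V\to V$ be the equivariant affine map that fixes $W$, scales $V-W-R$ by $\tfrac12$, and sends $t\in R$ to $t/(2a)$; set $r(M,a):=\sigma_a(M)$. Then $\sigma_a(M)$ has $(V-W-R)$-component of norm strictly less than $\tfrac12$ and $R$-component in $(0,\tfrac12)$, so the full $(V-W)$-component has norm less than $1/\sqrt2<1$; hence $\sigma_a(M)\in\psi_d(V,W)$. Continuity in $(M,a)$ is immediate from the way smooth families of diffeomorphisms of $V$ act on the $\psi_d$-spaces.

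For $r\circ i\simeq \mathrm{id}_{\psi_d(V,W)}$, observe that $r(i(M))=\varphi(M)$ with $\varphi(x,w,t)=(x/2,w,(t+1)/4)$, and interpolate linearly between $\varphi$ and the identity through equivariant affine maps $\varphi_s$; a direct check of the ``ellipsoid inequality'' $\alpha_s^2+\gamma_s^2\leq 1$ and $\beta_s+\gamma_s\leq 1$ (where $\alpha_s,\beta_s,\gamma_s$ are the scalings in $V-W-R$, $R$ and the $R$-shift) shows each $\varphi_s$ sends $D_1(V-W)\times W$ into itself, so the homotopy stays in $\psi_d(V,W)$. For $i\circ r\simeq \mathrm{id}_{\mathcal{M}_d(V,W)}$, set $H_s(M,a)=(\phi_{s,a}(M),(1-s)a+2s)$ where $\phi_{s,a}$ is the affine interpolation from the identity (at $s=0$) to $\tau_{\mathbf{e}_R}\circ \sigma_a$ (at $s=1$), with $\tau_{\mathbf{e}_R}$ denoting translation by $\mathbf{e}_R$. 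One verifies by a direct linear calculation that the image of $M$ has $(V-W-R)$-norm less than $1$ and $R$-component in $(0,(1-s)a+2s)$ throughout $s\in[0,1]$, so $H_s(M,a)\in\mathcal{M}_d(V,W)$.

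The main obstacle is purely bookkeeping: choosing the interpolations so that both the disk constraint (guaranteeing membership in $\psi_d(V,W+R)$) and the moving slab condition $M_s\subset\pi_R^{-1}((0,a_s))$ are preserved simultaneously for every $s\in[0,1]$. Both conditions are convex and preserved by our choices of affine maps, so the verifications reduce to a handful of elementary linear inequalities that hold by construction. Equivariance is built in by the initial observation, and continuity of $r$ and of the two homotopies in the $\psi_d$-topology is automatic, since they are induced by smoothly varying families of diffeomorphisms of $V$ which act continuously on the ambient spaces of submanifolds.
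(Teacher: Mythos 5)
Your proposal is correct and takes essentially the same approach as the paper: an explicit equivariant homotopy inverse given by affine rescaling in the $R$-direction (and here also in $V-W-R$), with linear interpolation through equivariant affine maps providing the two homotopies. In fact your version is slightly more careful than the paper's two-line proof, since shrinking the $V-W-R$ directions by $\tfrac12$ is exactly what guarantees the combined $(V-W)$-norm stays below $1$ so that the image genuinely lands in $\psi_d(V,W)$, a point the paper's sketch glosses over.
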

    \begin{proof}
      We can write down a homotopy inverse $\mathcal{M}_d(V,W) \to \psi_d(V,W)$
      by mapping
      $(M,a)$ to $\varphi_a (M)$, where $\varphi_a$ is the linear scaling of $V$ in the direction of $R$, mapping $a$ to $1$. 
    \end{proof}
    
    \begin{defn}
        For a finite group $H$ and a finite dimensional orthogonal representation $\widetilde{V}$ of $H$, let $\mathcal{N}_d^H(\widetilde{V})$ denote the cobordism set of $H$-manifolds embedded in $\widetilde{V}$, defined as follows. 
        Elements are equivalence classes of closed $d$-dimensional $H$-invariant submanifolds $M\subset D_1(\widetilde{V})$.
        We say two manifolds $M_0, M_1 \subset \widetilde{V}$ are cobordant if there exists a $(d+1)$-dimensional compact $H$-invariant submanifold $N\subset D_1(\widetilde{V})\times [0,1]$ with boundary 
        $$\partial N= \left (M_0\times \{ 0\} \right ) \coprod \left ( M_1\times \{1\} \right )$$ 
        and such that $N$ is cylindrical near $\widetilde{V}\times \{0,1\}$. 
    \end{defn}    
    If $\widetilde{V}$ contains a trivial summand $R$, then $\mathcal{N}_d^H(\widetilde{V})$ becomes a monoid, with composition given as follows.
            If $M_0, M_1 \subset D_1(\widetilde{V})$, the shifted manifolds $M_0' = M_0 + \mathbf{e}_R$ and $M_1' = M_1 - \mathbf{e}_R$ are disjoint, so their union $M = M_0' \cup M_1'$ is a manifold, and is still $H$-invariant, since $R$ is trivial. Rescaling $M$ to be contained in $D_1(\widetilde{V})$ gives a representative for the composite. The usual construction (see \cite[Corollary 3.11]{monoids} for example) of an embedding of $N = M_0 \times [0, 1]$ into $D_1(\widetilde{V}) \times [0, 1]$ so that $\partial N \cap \{ 1 \} = \varnothing$ shows that $\mathcal{N}_d^H(\widetilde{V})$ is in fact a group in this case.

    \begin{lemma}\label{cobgroup}
         For any $H\leq G$, there is an isomorphism of monoids 
         \[ \pi_0 \big (\mathcal{M}_d(V,W)^H \big) \cong \mathcal{N}_{d-\dim(W)}^H(V-W). \]
         In particular the monoid $\pi_0 \big (\mathcal{M}_d(V,W)^H \big)$ is a group.
    \end{lemma}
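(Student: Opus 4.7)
The plan is to construct a monoid homomorphism
\[\Phi\colon \pi_0\bigl(\mathcal{M}_d(V,W)^H\bigr) \longrightarrow \mathcal{N}_{d-\dim W}^H(V-W)\]
by slicing $H$-invariant manifolds at a regular value of the projection $\pi_W\colon V\to W$, and to invert it via the product construction $N\mapsto N\times W$. Once this is done, the ``in particular'' clause is free because $\mathcal{N}_{d-\dim W}^H(V-W)$ is already known to be a group. The setup is favorable because $H$ acts trivially on $W$, so slicing automatically produces $H$-invariant manifolds without any equivariant transversality subtleties.

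First I would define $\Phi$ on representatives. For $(M,a)\in\mathcal{M}_d(V,W)^H$, recall $M$ is an $H$-invariant closed $d$-submanifold of $D_1(V-W-R)\times W\times(0,a)$; by Sard's theorem applied to $\pi_W|_M\colon M\to W$ there is a regular value $w\in W$ arbitrarily close to $0$. The slice $M\cap \pi_W^{-1}(w)$ is then an $H$-invariant smooth $(d-\dim W)$-submanifold contained in the bounded set $D_1(V-W-R)\times\{w\}\times(0,a)\subset V-W$, and it is compact because $M$ is closed in $V$. Postcomposing with a fixed $H$-equivariant affine embedding $D_1(V-W-R)\times(0,a)\hookrightarrow D_1(V-W)$ (for example, shrink $D_1(V-W-R)$ by the factor $1/2$ and send $(0,a)$ linearly onto $(-1/2,1/2)$, so the image lies in $|x|^2+|r|^2<1/2$) yields an element of $\mathcal{N}_{d-\dim W}^H(V-W)$. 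Different choices of regular value give cobordant manifolds by the classical trick of taking the $M$-preimage of a smooth arc in $W$ joining them, perturbed off the critical locus.

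Next I would show $\Phi$ descends to $\pi_0$ and respects the monoid structure. A continuous path $\widetilde\gamma\colon[0,1]\to\mathcal{M}_d(V,W)^H$ is first smoothed via \cref{smoothapprox} (whose construction manifestly preserves the $H$-fixed-point condition), so that its total space $\widetilde M\subset[0,1]\times V$ is a smooth $(d+1)$-submanifold; slicing at a regular value of the combined projection $\widetilde M\to W$ produces a cobordism between $\Phi(M_0)$ and $\Phi(M_1)$. Compatibility with composition is immediate because both laws are ``stack in the $R$-direction'': slicing $M\cup(M'+a\mathbf{e}_R)$ at $w$ gives the slice of $M$ united with the slice of $M'$ shifted by $a$ in $R$, which matches the monoid law in $\mathcal{N}_{d-\dim W}^H(V-W)$ after the rescaling built into $\Phi$. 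For the inverse, send $N\subset D_1(V-W)$ to $(N'\times W,\,2)\in\mathcal{M}_d(V,W)^H$, where $N'$ is $N$ translated in $R$ so that it lies in $D_1(V-W-R)\times(0,2)$; the slice at $w=0$ recovers $N$ (up to the fixed rescaling), giving surjectivity, and extending a cobordism in $D_1(V-W)\times[0,1]$ by $\times W$ yields a path, giving injectivity. I expect the main obstacle to be the descent to $\pi_0$: the regular value depends on the representative $M$, so $\Phi$ is not literally defined at the level of spaces, and the well-definedness really rests on equivariant smoothing of paths together with the cobordism argument above. Equivariance itself is not an obstacle, precisely because the trivial $H$-action on $W$ reduces everything to classical non-equivariant Sard.
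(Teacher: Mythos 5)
Your construction is, up to reversing which direction is called ``the map'' and which ``the inverse,'' the same as the paper's: the paper defines $\varphi\colon\mathcal{N}^H_{d-\dim W}(V-W)\to\pi_0\big(\psi_d(V,W)^H\big)$ by $[N]\mapsto[N\times W]$, proves surjectivity by Sard slicing and injectivity by smoothing a path via \cref{smoothapprox} and slicing its graph, and sweeps cobordisms along a line in $W$ to check well-definedness. All of those ingredients appear in your write-up, and your observation that the trivial $H$-action on $W$ reduces everything to non-equivariant Sard is exactly the point the paper exploits.

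There is, however, a genuine gap in your injectivity argument. What you establish is: (a) $\Phi$ is well defined on $\pi_0$; (b) the candidate inverse $\Psi\colon[N]\mapsto[(N'\times W,2)]$ is well defined on cobordism classes (``extending a cobordism by $\times W$ yields a path''); and (c) $\Phi\circ\Psi=\mathrm{id}$, since the slice of $N'\times W$ at $w=0$ recovers $N$. These three facts make $\Phi$ a split surjection but do not make it injective: for that you need $\Psi\circ\Phi=\mathrm{id}$, i.e.\ that every $(M,a)\in\mathcal{M}_d(V,W)^H$ lies in the same path component as $(\text{its slice at a regular value})\times W$, and this is nowhere argued in your proposal. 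The paper supplies exactly the missing homotopy: for a regular value $x\in W$ of $\pi_W|_L$ it sets $T_t(w)=w+(w-x)t$ and $L_t=(T_t\oplus\mathrm{id}_{V-W})(L)$, which stretches $W$ away from $x$ and, as $t\to\infty$, pushes everything except the slice out of every bounded region, giving a path in $\psi_d(V,W)^H$ from $L$ to $\pi_W^{-1}(x)\times W$ (equivariant again because $W$ is trivial). With that step inserted your argument closes up. A smaller point: ``extending a cobordism by $\times W$'' does not literally produce a path in $\mathcal{M}_d(V,W)^H$; one must slide the cobordism along a chosen direction in $W$, as in the paper's one-parameter family $N_t$, since a path is a family of elements of $\psi_d(V,W+R)^H$ rather than a single manifold in $(V-W)\times[0,1]\times W$.
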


    Recall
    that being a group is a property of monoids (not extra data): firstly it must have left and right units, which are unique and equal when they both exist, and secondly any element must have a left and right inverse, which are also unique and equal when they exist.
    
    \begin{proof}
      (See also \cite[Proposition 3.6]{monoids}.)  We may replace the left hand side by $\pi_0 \big (\psi_d(V,W)^H \big)$ and consider the map
            \begin{equation}
        \label{eq:1}
        \begin{aligned}
          \mathcal{N}_{d-\dim(W)}^H(V-W) & \to \pi_0 \big (\psi_d(V,W)^H \big)\\
          [M] & \mapsto [M\times W].
        \end{aligned}
      \end{equation}
      To see this is indeed well-defined, first choose a one dimensional subspace $\Span (w)$ of $W$ (recall that we are assuming $W=W^G$ in this subsection). If $N\subset (V-W)\times [0,1]$ is a cobordism between $M_1$ and $M_2$, let 
        \begin{align*}
            N_t=M_1 \times (-\infty, t] \times (W-\Span(w))\: \cup \\
        (N+tw)\times (W-\Span(w))\: \cup \\
        M_2\times [1+t,\infty) \times (W-\Span(w)).
        \end{align*}
        Then $t\mapsto N_t$ gives a path $[-\infty, \infty]\to \psi_d(V,W)^{H}$ from $M_1\times W$ to $M_2\times W$.

        The map~\eqref{eq:1} is a homomorphism: in both cases the monoid structure comes from ``disjoint union'', using the direction of $\mathbf{e}_R$ to ensure disjointness. 
        
        The map~\eqref{eq:1} is surjective: if $L\in \psi_d(V,W)^{H}$, by Sard's theorem we can choose $x\in W$, a regular value of the projection $\pi_W \colon  L\to W$, and consider $M=\pi_W^{-1} (x)$. 
        Then the image of $M$ under~\eqref{eq:1} is in the same path component as $L$. To see this, let $T_t\colon W\to W$ be given by $w\mapsto w+(w-x)t$ (this is equivariant since $W$ is trivial), then 
        $$L_t=(T_t\oplus \textup{id}_{V-W})(L)$$ gives a path $[1,\infty]\to \psi_d(V,W)^{H}$ from $L$ to $M\times W$.
        
        To show injectivity, consider a path $p\colon[0,1]\to \psi_d(V,W)^{H}$ with endpoints $M_1\times W$ and $M_2\times W$. Up to homotopy we can assume by \cref{smoothapprox}, that
        the graph $\Gamma_f \subset [0,1]\times V$ is a smooth $H$-invariant manifold. Then taking the preimage of a regular value $w$ of the projection $\Gamma_f\to W$, gives a cobordism between $M_1$ and $M_2$. 
    \end{proof}

    We would like to conclude that the monoids $\mathcal{M}_d(V,W)^H$ are \emph{group-like} for all $H \leq G$.  Since these topological monoids are non-unital, the notion of being group-like is best defined as left and right multiplication by any element being a weak equivalence.  As in \cref{rmrk:non-unital}, the non-unital monoids $\mathcal{M}_d(V,W)^H$ have \emph{weak} units, namely the elements $(\emptyset,a)$ with $a \in \mathbb{R}_{>0}$: multiplication from the left or from the right by these elements is canonically homotopic to the identity map.  Together with \cref{cobgroup} this implies that the maps $\mathcal{M}_d(V,W)^H \to \mathcal{M}_d(V,W)^H$ defined by left or right multiplication by any element are indeed all homotopy equivalences.
    
    \begin{coro}
      The map
      \begin{align}\label{omegab}
        \mathcal{M}_d(V,W)\to \Omega B \mathcal{M}_d(V,W),
      \end{align}
      adjoint to the inclusion of the 1-skeleton $S^1 \wedge \mathcal{M}_d(V,W) \to B \mathcal{M}_d(V,W)$ is an equivariant equivalence.
    \end{coro}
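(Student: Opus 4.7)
The plan is to reduce the equivariant statement to the classical (non-equivariant) group completion theorem applied on each fixed point space. Since $G$ is finite and the standard two-sided bar construction $B_\bullet \mathcal{M}_d(V,W)$ is a simplicial $G$-space with $p$-simplices $\mathcal{M}_d(V,W)^p$ carrying the diagonal action, taking $H$-fixed points commutes both with forming each $p$-simplex and with thick geometric realization; hence there is a natural identification
\[
\bigl(B \mathcal{M}_d(V,W)\bigr)^H \;\cong\; B\bigl(\mathcal{M}_d(V,W)^H\bigr)
\]
for every $H \le G$. Since $\Omega$ commutes with fixed points, the map \eqref{omegab} at $H$-fixed points is identified with the unit
\[
\mathcal{M}_d(V,W)^H \;\longrightarrow\; \Omega B\bigl(\mathcal{M}_d(V,W)^H\bigr)
\]
of the ordinary non-equivariant topological monoid $\mathcal{M}_d(V,W)^H$.

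It therefore suffices to verify that this unit is a weak equivalence of spaces for each subgroup $H \le G$. For this I would invoke the classical group completion theorem (McDuff--Segal): if $N$ is a well-pointed topological monoid with $\pi_0(N)$ a group, then $N \to \Omega BN$ is a weak equivalence. The monoid $\mathcal{M}_d(V,W)^H$ is well-pointed (the identity is an isolated object, or a straightforward NDR check on $\mathrm{pt} \hookrightarrow \mathcal{M}_d(V,W)^H$), and by \cref{cobgroup} its $\pi_0$ is isomorphic to $\mathcal{N}^H_{d-\dim(W)}(V-W)$, which is a group. Thus the hypotheses of group completion are satisfied and we conclude that the unit is a weak equivalence.

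Combining these two observations, \eqref{omegab} induces a weak equivalence on every $H$-fixed point space, and is therefore an equivariant equivalence. The main obstacle I foresee is the first step: the identification $(B\mathcal{M}_d(V,W))^H \cong B(\mathcal{M}_d(V,W)^H)$ must genuinely hold at the point-set level, which is why working with the thick realization (as flagged in \cref{sec:first_deloop}) is important; it converts the question into a levelwise one, where fixed points of $\mathcal{M}_d(V,W)^p$ by the diagonal action are literally $(\mathcal{M}_d(V,W)^H)^p$. Once this and the point-set well-pointedness are in hand, the rest is an appeal to the non-equivariant theorem already used in \cite{monoids}.
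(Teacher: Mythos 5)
Your proposal is correct and follows essentially the same route as the paper: pass to $H$-fixed points, use that fixed points commute with the bar construction and with $\Omega$, and then apply the classical statement that a grouplike topological monoid satisfies $N \xrightarrow{\sim} \Omega BN$, with grouplikeness of $\mathcal{M}_d(V,W)^H$ supplied by \cref{cobgroup}. (Minor naming quibble: the statement you need is the "grouplike $\Rightarrow$ $N \simeq \Omega BN$" theorem, e.g.\ Segal's quasifibration result, rather than the McDuff--Segal group completion theorem proper, which concerns homology when $\pi_0$ is not a group.)
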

    \begin{proof}
      As explained above, the topological monoid $\mathcal{M}_d(V,W)^H$ is grouplike for any $H \leq G$, which implies that the canonical map
        \[
            \mathcal{M}_d(V,W)^H\to \Omega B \mathcal{M}_d(V,W)^H
        \]
        is a weak equivalence. But $\Omega B \mathcal{M}_d(V,W)^H\cong \big (\Omega B \mathcal{M}_d(V,W)\big )^H$, which proves \eqref{omegab} is a weak equivalence on $H$-fixed points for any subgroup $H\leq G$.
    \end{proof}
    \begin{defn}
        Let $\mathcal{P}_d(V,W)$ be the topological poset with object space 
        $$\mathrm{Ob}(\mathcal{P}_d(V,W))\subset \psi_d(V,W+R)\times \mathbf{R}$$
        consisting of pairs $(M,a)$ such that $M\cap \pi_R^{-1} (a)=\varnothing$. We say $(M,a) < (M,a')$ if $M=M'$ and $a < a'$.
    \end{defn}
    
    \begin{defn}
        Define a functor $p\colon \mathcal{P}_d(V,W)\to \mathcal{M}_d(V,W)$ given by mapping any object in $\mathcal{P}_d(V,W)$ to the unique object in $\mathcal{M}_d(V,W)$, and mapping the morphism $(M,a)< (M,a')$ to $(N,b)$ where 
        \[
        N=M\cap \pi_R^{-1}((a,a'))-a\mathbf{e}_R \quad and \quad b=a-a'. 
        \]
    \end{defn}
    
    \begin{lemma}\label{intermediate_poset}
        The functor $p\colon \mathcal{P}_d(V,W)\to \mathcal{M}_d(V,W)$ induces a levelwise equivariant equivalence on the nerves.
    \end{lemma}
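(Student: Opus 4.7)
The plan is to exhibit an equivariant simplicial section of $N_\bullet p$ together with an equivariant homotopy from the other composite to the identity, and then conclude levelwise weak equivalence on $H$-fixed points for every $H \leq G$.

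First, I would define a simplicial equivariant map $s\colon N_\bullet \mathcal{M}_d(V,W) \to N_\bullet \mathcal{P}_d(V,W)$ by sending a $p$-simplex $((N_1,b_1),\ldots,(N_p,b_p))$ to $(M, 0, a_1, \ldots, a_p)$ with $a_i = b_1 + \cdots + b_i$ and
\[
M = \bigcup_{i=1}^p (N_i + a_{i-1}\mathbf{e}_R) \in \psi_d(V, W+R).
\]
The inclusions $N_i \subset \pi_R^{-1}((0, b_i))$ guarantee that the shifted pieces lie in pairwise disjoint slabs $\pi_R^{-1}((a_{i-1}, a_i))$ and that $M$ is disjoint from each $\pi_R^{-1}(a_i)$, so $(M, a_0,\ldots,a_p) \in N_p \mathcal{P}_d(V,W)$. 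Since $\mathbf{e}_R$ is $G$-fixed, $s$ is $G$-equivariant, and $p \circ s = \mathrm{id}$ by construction. A routine check shows $s$ is compatible with face and degeneracy maps (identity-insertion in $\mathcal{M}_d$ corresponds to duplicating a value $a_i$ in $\mathcal{P}_d$).

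For the reverse composite, $(s \circ p)(M, a_0, \ldots, a_p) = (M'', 0, a_1 - a_0, \ldots, a_p - a_0)$ with $M'' = (M \cap \pi_R^{-1}((a_0, a_p))) - a_0\mathbf{e}_R$. I would build the equivariant homotopy $s \circ p \simeq \mathrm{id}$ in two stages. First, linearly slide $M$ by $-a_0\mathbf{e}_R$ and $(a_0,\ldots,a_p)$ by $-a_0$; writing $M_1$ for the translate, $M_1 \cap \pi_R^{-1}(a_i - a_0) = \varnothing$ still holds, so the path stays inside $\mathcal{P}_d(V,W)$. Second, apply a continuous family of equivariant self-diffeomorphisms $\phi_t$ of $V$, $t \in [0, 1)$, acting only in the $R$-coordinate, fixing $\pi_R^{-1}([0, a_p - a_0])$ pointwise, and stretching the two complementary half-lines so that every compact subset outside $\pi_R^{-1}([0, a_p - a_0])$ is eventually vacated by $\phi_t(M_1)$. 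By the compact-open nature of the topology on $\psi_d(V, W+R)$ recalled in the proof of \cref{smoothapprox}, the family extends continuously to $t = 1$ with limiting value exactly $M''$. Throughout, $\phi_t$ fixes $\pi_R^{-1}(a_i - a_0)$ pointwise for every $i$, so disjointness persists; equivariance is automatic because $R$ is a trivial $G$-representation.

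Taking $H$-fixed points commutes with all of the constructions above, so $s^H$ provides a levelwise equivariant homotopy inverse to $N_\bullet p^H$. The main obstacle I anticipate is choosing an explicit family $\phi_t$ (for instance a smooth cutoff profile times a divergent rescaling of the $R$-coordinate supported outside $[0, a_p - a_0]$) for which continuity of $t \mapsto \phi_t(M_1)$ at $t = 1$ in the $\psi_d$-topology can be rigorously verified while $\phi_t$ remains a diffeomorphism for every $t < 1$; once that technical point is settled, the rest is formal bookkeeping of simplicial identities and equivariance, parallel to Lemmas \ref{314}--\ref{316}.
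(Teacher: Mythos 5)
Your construction is essentially the paper's: the paper's proof consists of naming the levelwise inverse $N_p\mathcal{M}_d(V,W)\to N_p\mathcal{P}_d(V,W)$ onto chains $(M,0=a_0,\ldots,a_p)$ with $M\subset\pi_R^{-1}((0,a_p))$ (your map $s$) and deferring the homotopy $s\circ p\simeq\mathrm{id}$ to the argument of \cref{315}, which is exactly the translate-then-stretch-to-infinity homotopy you describe. One correction: $s$ is \emph{not} a simplicial map --- it commutes with degeneracies and the inner faces $d_i$ for $0<i<p$, but not with $d_0$ or $d_p$ (for instance $d_0\circ s$ keeps the piece $N_1$ in the slab $\pi_R^{-1}((0,a_1))$, while $s\circ d_0$ discards it). This is harmless here, since the lemma asserts only a levelwise equivalence and your inverse and homotopy are built level by level, but you should drop the claim of compatibility with face maps.
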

    \begin{proof}
      This is entirely similar to \cref{315}, using the map
      $N_p \mathcal{M}_d(V,W) \to \mathcal{P}_d(V,W)$ which sends $((M_1,a_1), \dots, (M_p,a_p))$ to $(M,0,a_1,\dots, a_p)$, where $(M,a) = (M_1,a_1) \circ \dots \circ (M_p,a_p)$ denotes the composition in $\mathcal{C}_d(V)$.  This map is equivariant and identifies $N_p \mathcal{M}_d(V,W)$ with the subspace of $N_p \mathcal{P}_d(V,W)$ consisting of $(M,a_0, \dots, a_p)$ where $a_0 = 0$ and where $M\cap \pi_R^{-1}(\mathbf{R}-(0,a_p))=\varnothing$.  An equivariant deformation retraction       to that subspace may be defined by translating in the $R$-direction until $a_0$ becomes 0, and then achieving $M \subset \pi_R^{-1}((0,a_p))$ by pushing the rest of $M$ off to $\pm \infty$ in the $R$-direction.  We again refer to the proof of \cite[Theorem 3.9]{monoids} for a more formal description of this homotopy.
    \end{proof}

Consider the forgetful map $u\colon B\mathcal{P}_d(V,W) \to \psi_d(V,W+R)$ taking a point
         $$(M,a_0,\ldots, a_p, t_0, \ldots , t_p)\in N_p \mathcal{P}_d(V,W) \times \Delta^p \subset B\mathcal{P}_d(V,W)$$
         to $M\in \psi_d(V,W+R)$. Since $u$ is equivariant and $B\mathcal{P}_d(V,W)$ is $G$-connected, for any subgroup $H\leq G$, taking fixed points we get maps
         $$ \big ( B\mathcal{P}_d(V,W) \big )^H \xrightarrow{u^H} \big ( \psi_d(V,W+R) \big)^H_{\varnothing}. $$
         Here $\big ( \psi_d(V,W+R) \big)^H_{\varnothing}$ denotes the component of the $H$-fixed points containing the empty manifold.
    \begin{lemma}\label{410}
         For any subgroup $H\leq G$ the map
         $$ \big ( B\mathcal{P}_d(V,W) \big )^H \xrightarrow{u^H} \big ( \psi_d(V,W+R) \big)^H_{\varnothing} $$
         is a weak equivalence.
    \end{lemma}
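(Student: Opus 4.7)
The plan is to solve the standard lifting problem
\[
\begin{tikzcd}
\partial D^q \arrow[r] \arrow[d] & \big( B\mathcal{P}_d(V,W) \big)^H \arrow[d, "u^H"] \\
D^q \arrow[r, "f"] \arrow[ru, dashed] & \big( \psi_d(V,W+R) \big)^H_\varnothing
\end{tikzcd}
\]
for each $q \geq 0$, following the same partition-of-unity strategy used in \cref{316} and \cite[Theorem 3.10]{monoids}. The crucial feature that makes the equivariant refinement essentially automatic is that $R$ is a trivial subrepresentation, so any value $a \in \mathbf{R} \cong R$ occurring as the second coordinate of an object of $\mathcal{P}_d(V,W)$ is automatically $H$-fixed.

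I would first show that for each $x \in D^q$ there exists some $a \in \mathbf{R}$ with $f(x) \cap \pi_R^{-1}(a) = \varnothing$. This is the only place where the restriction to the $\varnothing$-component enters: the set
\[
\{M \in \psi_d(V,W+R) : \pi_R(M) \neq \mathbf{R}\} \;=\; \bigcup_{a \in \mathbf{R}} \{M : M \cap \pi_R^{-1}(a) = \varnothing\}
\]
is open and contains $\varnothing$, and the claim is that it contains the entire $\varnothing$-component. Should a direct point-set argument on the topology of $\psi_d(V,W+R)$ prove awkward, one may alternatively first replace $f$ by a homotopic map (equivariantly, using \cref{smoothapprox}) each of whose values has $R$-projection contained in a compact subset of $\mathbf{R}$.

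Given this, for each $a \in \mathbf{R}$ the set $U_a = \{x \in D^q : f(x) \cap \pi_R^{-1}(a) = \varnothing\}$ is open (this is essentially how the topology on $\psi_d$ is defined) and the $U_a$'s now cover $D^q$. Choose a finite refinement with associated values $a_1 < \cdots < a_n$---arranged so that all values appearing in the given lift over $\partial D^q$ are included---together with a subordinate partition of unity $\{\lambda_i\}$. Define the extension on $x \in D^q$ to be the point of $\big(B\mathcal{P}_d(V,W)\big)^H$ represented by the simplex $(f(x), a_{i_1} \leq \cdots \leq a_{i_k}) \in N_{k-1}\mathcal{P}_d(V,W)^H$ with barycentric coordinates $(\lambda_{i_1}(x), \ldots, \lambda_{i_k}(x))$, where $i_1 < \cdots < i_k$ are the indices with $\lambda_i(x) > 0$. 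This is well defined because each $a_{i_j}$ satisfies the required separation condition, and continuity as well as compatibility with the boundary data follow from the standard properties of geometric realization. Equivariance is immediate because $D^q$ carries the trivial $H$-action and the values $a_i$ lie in a trivial representation.

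The main obstacle I expect is the first step: verifying that the $\varnothing$-component of $\psi_d(V,W+R)^H$ consists entirely of manifolds with $\pi_R(M) \neq \mathbf{R}$, or equivalently that $f$ can be homotoped (relative to $\partial D^q$ if necessary) so that all of its values satisfy this condition. The remaining steps are formally identical to the non-equivariant argument.
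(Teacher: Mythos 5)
There is a genuine gap, and it sits exactly where you suspected. Your first step asserts (or hopes) that every $L$ in the $\varnothing$-component of $\psi_d(V,W+R)^H$ satisfies $L\cap\pi_R^{-1}(a)=\varnothing$ for some $a$. This is false: if $N\subset D_1(V-W-R)$ is any non-empty null-bordant closed $(d-\dim W-1)$-manifold (two points, say), then $N\times(W+R)$ lies in the $\varnothing$-component by the identification $\pi_0\big(\psi_d(V,W+R)^H\big)\cong\mathcal{N}^H_{d-\dim(W+R)}(V-W-R)$ of \cref{cobgroup}, yet $\pi_R$ maps it onto all of $R$. So the covering sets $U_a$ in your argument need not cover $D^q$, and the subsequent partition-of-unity step cannot start. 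Your fallback---first homotope $f$ so that its values have compact $R$-projection---is the correct goal, but it \emph{is} the lemma, not a reduction of it: \cref{smoothapprox} only smooths families and cannot change $\pi_0$-level geometry like this. The contrast with \cref{316} is instructive: there one only needs regular values of $\pi_1$, which always exist by Sard; here one needs values $a$ over which the manifold is \emph{empty}, which is a much stronger condition.

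The missing idea, which is the actual content of the paper's proof, is a surgery step: pick a regular value $a$ of $\pi_R$ on $f(x)$, observe that the slice $f(x)\cap\pi_R^{-1}(a)$ is a closed manifold that is \emph{equivariantly} null-bordant precisely because $f(x)$ lies in the $\varnothing$-component (this is where \cref{cobgroup} is used, and where the hypothesis on the component enters in an essential way), and then glue in the equivariant null-bordism near $\pi_R^{-1}(a)$ to produce a path in $\psi_d(V,W+R)^H$ from $f(x)$ to a manifold disjoint from $\pi_R^{-1}(a)$. This is the equivariant analogue of \cite[Propositions 3.20 and 3.21]{monoids}. Once that step is available (done locally in $D^q$ and patched with a partition of unity), the remainder of your argument---the choice of values $a_1<\cdots<a_n$, the barycentric-coordinate formula for the lift, and the observation that equivariance is automatic since $R$ is trivial---is correct and matches the standard template. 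But as written the proposal omits the one step that distinguishes this lemma from \cref{316}, so it does not constitute a proof.
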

    
    \begin{proof}
        By \cref{cobgroup} we know that if $L\in \big ( \psi_d(V,W+R) \big)^H_{\varnothing}$ then for a regular value $x\in W+R$ of $\pi_{R+W}$ the preimage $M=\pi_{R+W}^{-1} (x)$ is equivariantly null-bordant. This allows us to apply analogous arguments to \cite[Proposition 3.20 and 3.21]{monoids} to prove the claim.
    \end{proof}
    
    \begin{rmrk} 
    In \cref{410} it is important to take a component of the fixed points (rather than fixed points of a component), because the group 
    $$\pi_0 \left ( \left ( ( \psi_d (V,W)_{\varnothing} \right ) ^H\right )$$ 
    is not necessarily trivial. Indeed,
        we can identify 
\[
\pi_0 \left ( \left ( ( \psi_d (V,W)_{\varnothing} \right ) ^H\right ) \cong \mathrm{Ker} \big (\mathcal{N}_{d-|W|}^{H} (V-W)\xrightarrow{\upsilon} \mathcal{N}_{d-|W|} (V-W)\big ), 
\]
where $\upsilon$ is the map forgetting the action. The following is an example when this kernel is non-trivial.
Let $G=\mathbf{Z}/2$, $\sigma$ be the sign representation. For a $G$-representation $V$ let $\mathbf{P}(V)$ be the associated projective space. 
Then the manifold $\mathbf{P}(1\oplus \sigma)\coprod \mathbf{P}(1\oplus 1)$ is null-bordant but not equivariantly so, giving a non-trivial element in the kernel (cf. \cite{sinha}).
\end{rmrk}

    \begin{coro}
    The map 
    \[
        \Omega B\mathcal{P}_d(V,W) \xrightarrow{\Omega u} \Omega \psi_d(V,W+R)
    \]
    is an equivariant equivalence.
    \end{coro}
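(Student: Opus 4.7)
The plan is to verify the equivariant weak equivalence on $H$-fixed points for every subgroup $H \leq G$. Two general facts make this essentially a corollary of \cref{410}. First, because $S^1$ carries the trivial $G$-action, the loop functor commutes with $H$-fixed points for any pointed $G$-space with $G$-fixed basepoint: $(\Omega X)^H = \operatorname{Map}_*(S^1, X)^H = \operatorname{Map}_*(S^1, X^H) = \Omega(X^H)$. Second, $\Omega Y$ depends only on the path component of the basepoint of $Y$, so $\Omega(X^H) = \Omega((X^H)_{x_0})$ for $x_0$ the basepoint.

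I would take the $G$-fixed basepoint $\varnothing \in \psi_d(V, W+R)$, and on the $B\mathcal{P}_d(V,W)$ side a basepoint mapping to it under $u$ (e.g.\ induced by the object $(\varnothing, 0)$). Applying the two facts above to both $\psi_d(V, W+R)$ and $B\mathcal{P}_d(V,W)$, the map $(\Omega u)^H$ is identified with $\Omega$ applied to the restriction of $u^H$ to basepoint components. By \cref{410}, the map
\[
u^H\colon \bigl(B\mathcal{P}_d(V,W)\bigr)^H \xrightarrow{\sim} \bigl(\psi_d(V, W+R)\bigr)^H_{\varnothing}
\]
is a weak equivalence, and the target is precisely the basepoint component of $(\psi_d(V, W+R))^H$. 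Restricting to basepoint components on the source and applying the weak-equivalence-preserving functor $\Omega$ yields the desired weak equivalence on $H$-fixed points, hence the equivariant equivalence.

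The only point that needs care is the one already flagged by the preceding remark: $(\psi_d(V, W+R))^H$ may have extra path components that $u^H$ misses, so one cannot upgrade \cref{410} directly to a weak equivalence onto the full fixed-point space. After looping at $\varnothing$, however, those extra components become invisible, so no additional argument is needed beyond this bookkeeping.
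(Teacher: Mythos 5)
Your proposal is correct and matches the paper's proof: both reduce to $H$-fixed points, use $\bigl(\Omega \psi_d(V,W+R)\bigr)^H = \Omega\bigl(\psi_d(V,W+R)\bigr)^H_{\varnothing}$, and apply \cref{410}. The only extra bookkeeping you add (restricting the source to its basepoint component) is harmless since $B\mathcal{P}_d(V,W)$ is $G$-connected, as noted just before \cref{410}.
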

    \begin{proof}
        We need to show $\Omega u$ is a weak equivalence on $H$-fixed points for all $H\leq G$. This is true, since $\big (\Omega \psi_d(V,W+R) \big )^H=\Omega \big ( \psi_d(V,W+R) \big)^H_{\varnothing} $, and so we can apply \cref{410}.
    \end{proof}
    Thus we get the following sequence of equivariant equivalences
    \[
    \psi_d(V,W)\simeq \mathcal{M}_d(V,W) \simeq \Omega B \mathcal{M}_d(V,W) \simeq \Omega B \mathcal{P}_d(V,W) \simeq \Omega \psi_d(V,W+R),
    \]
    showing that~\eqref{looping} is a weak equivalence when $R$ is a one-dimensional trivial representation.  By induction it follows when $R$ is a trivial representation of any dimension.
    
    \subsection{Non-trivial representations}\label{nontriv}
    The main goal for the rest of this section is to prove the following.
    \begin{proposition}\label{eqdeloop}
        The map $\alpha_R\colon \psi_d(V,W)\to \Omega^R \psi_d (V, W+R)$ in \eqref{looping} is an equivariant weak equivalence for an arbitrary $G$-representation $R$, assuming $\dim(W^G)>d$.
    \end{proposition}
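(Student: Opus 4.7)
I would prove this by induction on $|G|$. The base case $|G|=1$ is vacuous, since every representation of the trivial group is trivial. For the inductive step, assume the statement for all groups of order less than $|G|$. We need $(\alpha_R)^H$ to be a weak equivalence for every $H \leq G$. For any proper $H \lneq G$, restricting $V$, $W$, $R$ to $H$-representations preserves the hypothesis ($\dim(W^H) \geq \dim(W^G) > d$), so the inductive hypothesis yields that $\alpha_R$ is $H$-equivariantly a weak equivalence, and in particular $(\alpha_R)^H$ is a weak equivalence. It therefore suffices to show $(\alpha_R)^G$ is a weak equivalence.

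For this, I would decompose $R = R^G \oplus R'$ with $R' = (R^G)^{\perp}$ and use the commutative triangle $\alpha_R = \Omega^{R^G}(\alpha_{R'}) \circ \alpha_{R^G}$ from the diagram after \eqref{looping}. Iterating the previous subsection's 1-dimensional trivial case (the hypothesis $\dim(W^G) > d$ is preserved under adding trivial summands to $W$) shows that $\alpha_{R^G}$ is an equivariant weak equivalence. Since $\Omega^{R^G}$ preserves equivariant weak equivalences, it remains to show $\alpha_{R'}$ is an equivariant weak equivalence; this reduces us to the case $R^G=0$, after replacing $W$ by $W+R^G$ (which still satisfies the hypothesis).

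For the remaining case $R^G=0$, I would use the equivariant cofiber sequence $S(R)_+ \to D(R)_+ \to S^R$ to produce a homotopy fiber sequence $\Omega^R X \to X \to \mathrm{Map}(S(R), X)$ with $X = \psi_d(V, W+R)$. Taking $G$-fixed points and exploiting that $S(R)$ has only proper-subgroup isotropy (since $R^G=0$), the space $\mathrm{Map}^G(S(R), X)$ is controlled via the equivariant CW structure of $S(R)$ by data at proper subgroups, where the inductive hypothesis applies. Comparing this fiber sequence with the natural inclusion $\psi_d(V, W)^G \hookrightarrow X^G$ reduces the problem to a geometric identification carried out by a $G$-equivariant transversality argument. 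I expect this last step to be the main obstacle: producing, from a $G$-equivariant family parametrized by $S^R$, a preferred $G$-invariant manifold in $\psi_d(V, W)^G$ is delicate because no nonzero point of $R$ is $G$-fixed, so one cannot simply evaluate at a preferred regular value. The hypothesis $\dim(W^G) > d$ provides the trivial ambient room needed to arrange such transversality $G$-equivariantly, analogously to its role in \cref{cobgroup}.
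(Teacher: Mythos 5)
Your reductions are sound and broadly parallel to what the paper does, though organized differently: the outer induction on $|G|$ together with the observation that $S(R)$ has only proper isotropy when $R^G=0$ is a legitimate reorganization of the paper's single induction over an equivariant triangulation of $S(R)$ (\cref{induction}), and splitting $R=R^G\oplus R'$ via the compatibility triangle, after first delooping the trivial directions, is fine (modulo the small point that the one-dimensional trivial case in the paper is stated for $W$ trivial, so you would need to check it for general $W$ with $\dim(W^G)>d$). The fibration sequence $\Omega^R X\to X\to \Map(S(R),X)$ you extract from $S(R)_+\to D(R)_+\to S^R$ is exactly the right-hand column of the paper's diagram \eqref{segalannulus}.

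The genuine gap is in the step you defer to "a $G$-equivariant transversality argument," and the obstacle is not the one you identify. To compare your fibration sequence with $\alpha_R$ you need a second fiber sequence mapping to it, namely $\psi_d(V,W)\to \Psi_d^R(\overline{D_1(R)}_\epsilon)\to \Psi_d^R(S(R)_\epsilon)$ of spaces of manifolds supported near the disc and near the sphere, together with two nontrivial facts: (i) the homotopy fiber of the restriction map $\Psi_d^R(\overline{D_1(R)}_\epsilon)\to \Psi_d^R(S(R)_\epsilon)$ really is $\psi_d(V,W)$ --- this is where $\dim(W^G)>d$ enters, via the $G$-connectedness of a monoid of manifolds in an annulus and the grouplike bar-construction \cref{hocart}; and (ii) the scanning map $\Psi_d^R(S(R)_\epsilon)\to \Map\big(S(R),\Psi_d^R(D_\epsilon(R))\big)$ is an equivariant equivalence (\cref{bottommap}). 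Point (ii) does not follow formally from the equivariant CW structure of $S(R)$ plus the inductive hypothesis: one needs a local-to-global (Mayer--Vietoris) property asserting that $\Psi_d^R(O_1\cup O_2)$ is the $G$-homotopy pullback of $\Psi_d^R(O_1)$ and $\Psi_d^R(O_2)$ over $\Psi_d^R(O_1\cap O_2)$ (\cref{homotopy_sheaf}), which is itself proved by another bar-construction argument and is the technical heart of the proof. No choice of a "preferred $G$-invariant manifold" by evaluating at a regular value is ever made; the identification of the fiber map with $\alpha_R$ is instead an explicit equivariant homotopy (\cref{homotopyfibers}). As written, your proposal supplies the formal frame but omits the two substantive inputs (i) and (ii), so it does not yet constitute a proof.
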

    
    The method of the proof is originally due to Segal (\cite{segal}), later refined by Shimakawa (\cite{shima}) and Blumberg (\cite{blum}). It would be interesting to see if the equivariant loop space machines of \cite{caruso_waner} or \cite{may_merling_osorno} could be applied. For the purposes of this paper, we found it easier to give a direct proof using monoidal bar constructions.
    
    The outline is as follows: first, we describe the scanning map which relates spaces of manifolds to mapping spaces. Then we reduce the statement from the loop space (i.e.\ based maps from $S^R$) to a statement about unbased maps from the unit sphere $S(R)$. This allows us to argue locally and finish our proof by an inductive statement using an equivariant triangulation of $S(R)$. Throughout the rest of this section we repeatedly use the idea of computing homotopy fibers via identifying spaces with various bar constructions.

    Let $\pi_R \colon V\to R$ denote orthogonal projection. Let $D_{r,c}(V)\subset V$ denote the open disk in the orthogonal $G$-representation $V$, of radius $r>0$, and centered at $c\in R$ (or at the origin if $c$ is omitted).

\newcommand{\PsiRd}{\mathcal{F}}
\newcommand{\psiRd}{\mathcal{F}_c}
\newcommand{\FF}{\mathcal{F}'}

    \begin{defn}
      For a $G$-invariant open subset $O\subset R$, let
      $\PsiRd(O)$ denote the subspace of $\Psi_d(\pi_R^{-1} (O))$ consisting of those $M$ such that
      $$M\subset D_1 \big ( (W+R)^\bot \big ) \times (W+R). $$
  \end{defn}
  This notation $O \mapsto \PsiRd(O)$ will only be used in subsections~\ref{nontriv}--\ref{scanning_map_section}, throughout which $V$,  $W$, and $R$ are fixed, so we omit them from the notation.
  Let us also mention that invariant open subsets $O \subset R$ are in bijection with open subsets of the quotient space $R/G$, by sending $O \mapsto O/G \subset R/G$, and therefore $\PsiRd$ may be regarded as a presheaf of topological spaces on $R/G$.  It is not hard to see that this presheaf is in fact a sheaf of topological spaces.
  
    For the rest of this section, fix $\epsilon = 1/2$. (In fact any $\epsilon \in (0,1)$ works for the proof.  When talking about scanning maps we often think of $\epsilon$ being small, hence the notation.)
    
    \begin{defn}
        For a subset $C\subset R$ let $C_{\epsilon} \subset R$ denote the open $\epsilon$-neighbor\-hood of $C$ 
        \[
        C_{\epsilon} = \bigcup\limits_{c\in C} D_{\epsilon, c}(R). 
        \]
        Define the scanning map
        \begin{align}\label{scanning}
             \PsiRd(C_\epsilon) \to \Map \big (C, \PsiRd(D_\epsilon(R))\big)
        \end{align}
        as adjoint to the map 
        \begin{equation*}
          \begin{aligned}
            C \times \PsiRd(C_\epsilon) & \to \PsiRd(D_\epsilon(R))\\
            (c,M) & \mapsto (t_c^{-1}(M)) \cap \pi_R^{-1}(D_\epsilon(R)),
          \end{aligned}
        \end{equation*}
        where $t_c \colon R\to R$ is given by $x \mapsto x + c$.
        
        If $C$ is a $G$-invariant subset, then \eqref{scanning} is an equivariant map.
    \end{defn}
    
    For a subset $C\subset R$ we will write $\overline{C}$ to denote the closure of $C$ in $R$.
    Let $S(R)_{\epsilon}$ be the open $\epsilon$-neighborhood of the unit sphere $S(R)$ in $R$, i.e.\ $S(R)_\epsilon = \{ v\in R \mid 1-\epsilon < |v| < 1 + \epsilon \}$. We have the following commutative diagram, whose horizontal maps are scanning maps and vertical maps are restrictions:
    
\begin{equation}\label{segalannulus}
 \begin{tikzcd}
        \PsiRd (\overline{D_{1}(R)}_\epsilon) \arrow[d] \arrow[r] & \Map \big (\overline{D_1(R)}, \PsiRd(D_\epsilon(R)) \big) \arrow[d] \\
        \PsiRd (S(R)_{\epsilon}) \arrow[r]  & \Map \big (S(R), \PsiRd(D_\epsilon(R)) \big)        
 \end{tikzcd}
\end{equation}

Here the top map is clearly an equivalence. Our goal for the rest of this section is to prove in \cref{scanning_map_section} that the bottom map is also an equivariant equivalence when $\dim(W^G) > d$, after discussing some prerequisites in \cref{bar_construction_section} and \cref{homotopy_sheaf_section}. Finally, we show in \cref{finishing_section} that the induced map between the homotopy fibers of the vertical maps in \eqref{segalannulus} can be identified up to equivariant homotopy equivalence with $\alpha_R\colon \psi_d(V,W)\to \Omega^R \psi_d (V, W+R)$, which is therefore also an equivariant equivalence. This will conclude the proof of \cref{eqdeloop} and thus our main theorem.

\subsection{Equivariant bar constructions}\label{bar_construction_section}

In order to prove the statements above, we use bar construction models for certain spaces of manifolds. For these to be useful, we need to discuss some general properties of topological monoids with group actions.

\begin{defn}
  A \emph{topological $G$-monoid} is a topological monoid $\mathcal{M}$ with a left $G$-action, satisfying the compatibility relations
      $$(gm_1)(gm_2)=g(m_1m_2)$$
    for any $g\in G$ and $m_1, m_2 \in \mathcal{M}$.
    
    By a \emph{$G$-space} we shall mean a topological space with a continuous left $G$-action. We consider actions of topological $G$-monoids on $G$-spaces (note that in the following the group $G$ always acts on the left, but the monoid $\mathcal{M}$ can act on either side).
    If $\mathcal{M}$ acts on the $G$-space $Y$ from the left, we say the action is equivariant if 
    $$g(my) = (gm)(gy)$$
    for any $g\in G$, $m\in \mathcal{M}$ and $y\in Y$.
    
    Similarly if $\mathcal{M}$ acts on the $G$-space $X$ from the right, we say the action is equivariant if 
    $$g(xm) = (gx)(gm)$$
    for any $g\in G$, $m\in \mathcal{M}$ and $x\in X$.
\end{defn}

If $\mathcal{M}$ is a topological $G$-monoid, then the fixed point space $\mathcal{M}^H$ is a topological monoid for any $H\leq G$.

\begin{defn}
    We say the topological $G$-monoid $\mathcal{M}$ is \emph{grouplike} if $\mathcal{M}^H$ is grouplike for all $H\leq G$, that is, left and right multiplication by any $x \in \mathcal{M}^H$ defines a weak equivalence $\mathcal{M}^H \to \mathcal{M}^H$.
\end{defn}

\begin{defn}
  Let $\mathcal{M}$ be a topological $G$-monoid, let $X$ and $Y$ be $G$-spaces with $\mathcal{M}$ acting equivariantly from the right on $X$ and from the left on $Y$. Define the two-sided bar construction $B(X,\mathcal{M}, Y)$ as the
  geometric realization of the semi-simplicial $G$-space $N_p(X, \mathcal{M}, Y)= X\times \mathcal{M}^p \times Y$ with the usual face and degeneracy maps.
\end{defn}
Note that $\mathcal{M}$ being a $G$-monoid and the actions being equivariant guarantee that all the face 
maps are equivariant, so $B(X,\mathcal{M}, Y)$ has a natural $G$-action.

We can identify the fixed points $B(X,\mathcal{M}, Y)^H=B(X^H, \mathcal{M}^H, Y^H)$ for any $H\leq G$, since geometric realization preserves equalizers (see \cite[Corollary 11.6]{may_iterated} or \cite[Proposition C.1]{ScholzeNikolaus} where this is proved for thin realization of simplicial spaces, the case of thick realization of semi-simplicial spaces follows by formally adding degeneracies).

\begin{lemma}\label{hocart}
    If $\mathcal{M}$ is a grouplike topological $G$-monoid then for any $Y$ the homotopy fiber of the map $p\colon B(X, \mathcal{M}, Y)\to B(\ast, \mathcal{M}, Y)$ induced by $X \to \ast$ is equivariantly equivalent to $X$. This also implies that the square
    \[
        \begin{tikzcd}
            B(X, \mathcal{M}, Y) \arrow[d] \arrow[r] & B(X, \mathcal{M}, \ast) \arrow[d] \\
            B(\ast, \mathcal{M}, Y) \arrow[r] & B(\ast, \mathcal{M}, \ast)
        \end{tikzcd}
    \]
    is $G$-homotopy cartesian.
  \end{lemma}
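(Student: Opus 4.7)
The plan is to take $H$-fixed points for each $H \leq G$ and reduce to the classical non-equivariant quasi-fibration theorem for bar constructions over grouplike topological monoids. Since $\mathcal{M}$ is well-pointed, each $N_\bullet(X^H, \mathcal{M}^H, Y^H)$ is a good simplicial space (as remarked before the lemma), so geometric realization commutes with $H$-fixed points. Consequently
$$B(X, \mathcal{M}, Y)^H = B(X^H, \mathcal{M}^H, Y^H),$$
and the same holds at the other three corners of the square. By hypothesis $\mathcal{M}^H$ is a grouplike topological monoid, and the equivariance of the $\mathcal{M}$-actions guarantees that they restrict to $\mathcal{M}^H$-actions on $X^H$ and $Y^H$.

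It therefore suffices to prove the non-equivariant statement: for a grouplike well-pointed monoid $M$ acting on the right of a space $X'$ and on the left of $Y'$, the map $p \colon B(X', M, Y') \to B(\ast, M, Y')$ is a quasi-fibration whose fiber over any $0$-simplex coming from $Y'$ is canonically identified with $X'$. This is the standard McDuff--Segal argument: filter the base by the skeletal filtration $F_p = |N_{\leq p}(\ast, M, Y')|$, observe that over the open strata $F_p \setminus F_{p-1}$ the map $p$ is a trivial bundle with fiber $X' \times M^p$, and check the local-triviality-up-to-homotopy condition of Dold at each stratum attaching. The grouplike assumption is exactly what makes right-multiplication by simplicial $M$-coordinates a homotopy equivalence on fibers, which is the hypothesis needed for Dold's recognition criterion to go through. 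Well-pointedness of $M$ ensures the skeletal inclusions are cofibrations, so the local quasi-fibrations glue to a global one.

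For the homotopy cartesian claim, apply the fiber identification to both vertical maps simultaneously. Over compatible basepoints, both $B(X, \mathcal{M}, Y) \to B(\ast, \mathcal{M}, Y)$ and $B(X, \mathcal{M}, \ast) \to B(\ast, \mathcal{M}, \ast)$ have equivariant homotopy fiber equivariantly equivalent to $X$, and the map between the fibers induced by the horizontal arrows $Y \to \ast$ is the identity $X \to X$. Comparing the resulting long exact sequences of homotopy groups on each $H$-fixed-point space yields the $G$-homotopy cartesian property. The main obstacle is the quasi-fibration step: the skeletal induction is delicate and requires both cofibration conditions (ensured by well-pointedness of $\mathcal{M}^H$ for all $H$) and the grouplike hypothesis to verify Dold's criterion; once this is established, the equivariant conclusion is formal by applying the argument on each fixed-point space.
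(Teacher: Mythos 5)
Your proof is correct and follows essentially the same route as the paper: pass to $H$-fixed points (using that realization commutes with fixed points, which the paper justifies by realization preserving equalizers rather than by goodness), then invoke the classical fact that $B(X',M,Y')\to B(\ast,M,Y')$ is a quasifibration with fiber $X'$ when $M$ is grouplike --- the paper simply cites Segal's Proposition~1.6 where you re-sketch the Dold/McDuff--Segal argument. No gap.
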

  That the square is $G$-homotopy cartesian means that the square induced by taking fixed points for $H \leq G$ is homotopy cartesian, for any $H \leq G$.  Equivalently, the map from the upper left corner to the homotopy pullback of the rest of the diagram is an equivariant weak equivalence, in the path space model for homotopy pullback.
  \begin{proof}
    We
        again use that geometric realization commutes with taking fixed points, and hence the homotopy fiber of the top horizontal map after taking $H$-fixed points is identified with the homotopy fiber of
    $$B(X^H, \mathcal{M}^H, Y^H)\xrightarrow{p^H} B(\ast, \mathcal{M}^H, Y^H).$$
    Since $\mathcal{M}^H$ is grouplike, the map $X^H \to \hofib(p^H)$ is a weak equivalence (by \cite[Proposition 1.6]{segal_cat}).  Similar arguments apply to the bottom horizontal map, and the induced map of horizontal homotopy fibers is weakly equivalent to the identity map of $X^H$.
\end{proof}

\subsection{Homotopy sheaves}\label{homotopy_sheaf_section}

In our study of~\eqref{segalannulus} it will be convenient to use the following special case of a ``homotopy sheaf'' property of $O \mapsto \PsiRd(O)$.

\begin{proposition}\label{homotopy_sheaf}
  Assume $\dim(W^G) > d$ and let $O_1$ and $O_2$ be $G$-invariant open subsets of $R$. Assume there exists an equivariant diffeomorphism $(\delta, f)\colon O_1\cap O_2 \to \mathbf{R} \times Q$, where $\mathbf{R}$ has trivial action and $Q$ is a smooth $G$-manifold such that the identity
    map of $Q$ is equivariantly isotopic to an embedding $Q \to Q$ whose image has compact closure.  Assume further that $\delta$ extends to a continuous map $\delta \colon O_1\cup O_2 \to [-\infty, +\infty]$ such that $O_1\setminus O_2 = \delta^{-1} (-\infty)$ and $O_2 \setminus O_1 = \delta^{-1}(+\infty)$. Then the square of restrictions
    \[
    \begin{tikzcd}
        \PsiRd(O_1\cup O_2) \ar[r] \ar[d] & \PsiRd(O_1) \ar[d] \\
        \PsiRd(O_2) \ar[r] & \PsiRd(O_1\cap O_2)
    \end{tikzcd}
    \]
    is $G$-homotopy cartesian.
\end{proposition}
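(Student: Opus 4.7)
The plan is to reduce the $G$-homotopy cartesian statement to the ordinary Mayer--Vietoris property of the non-equivariant spaces of manifolds on each fixed-point level, then exploit the product structure on $O_1 \cap O_2$ together with the trivial $G$-action on its $\mathbf{R}$-factor to construct an explicit gluing inverse. A square of $G$-spaces is $G$-homotopy cartesian iff for every $H \leq G$ the square of $H$-fixed points is homotopy cartesian in the ordinary sense. Since passing to $H$-fixed points commutes with restriction to $G$-invariant open subsets, $\Psi_d^R(O)^H$ consists of $H$-invariant closed $d$-submanifolds of $\pi_R^{-1}(O)$ meeting the disk-bundle constraint, so the problem reduces, for each $H$, to a non-equivariant Mayer--Vietoris statement.

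Fixing $H$, I would model the homotopy pullback as the space of triples $(M_1, M_2, \gamma)$ with $M_i \in \Psi_d^R(O_i)^H$ and $\gamma \colon [0,1] \to \Psi_d^R(O_1 \cap O_2)^H$ a path from $M_1|_{O_1 \cap O_2}$ to $M_2|_{O_1 \cap O_2}$. Using the diffeomorphism $(\delta, f)$ and choosing $a < b$ together with a smooth function $\phi \colon \mathbf{R} \to [0,1]$ vanishing on $(-\infty, a]$ and equal to $1$ on $[b, \infty)$, I would define a glued manifold $M \in \Psi_d^R(O_1 \cup O_2)^H$ slice-wise in $\delta$: use the slice of $M_1$ for $\delta \leq a$, the slice of $M_2$ for $\delta \geq b$, and the slice at height $\delta$ of $\gamma(\phi(\delta))$ for $a \leq \delta \leq b$. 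Since $\delta$ takes values in the trivial representation, the construction is $H$-equivariant; its $G$-invariance is automatic from the $H$-invariance of each input.

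The verification is then direct: restricting the glued $M$ to $O_i$ recovers $M_i$ by construction, and the induced path in $\Psi_d^R(O_1 \cap O_2)^H$ agrees with $\gamma$ up to the canonical reparametrization $\phi$, hence is canonically homotopic to $\gamma$. In the other direction, the gluing of $(M|_{O_1}, M|_{O_2}, \mathrm{const}_{M|_{O_1 \cap O_2}})$ for $M \in \Psi_d^R(O_1 \cup O_2)^H$ is isotopic to $M$ by an isotopy supported in the transition region $\{a \leq \delta \leq b\}$.

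The main obstacle will be ensuring that the slice-wise gluing yields a genuinely smooth manifold depending continuously on the input. At the seams $\delta = a$ and $\delta = b$ the slices of $M_1$, of $\gamma(0) = M_1|_{O_1 \cap O_2}$, and of the interpolating path must patch smoothly. I expect to handle this by first applying \cref{smoothapprox} to isotope $(M_1, M_2, \gamma)$ so that each $M_i$ is cylindrical in the $\delta$-direction in a neighborhood of the seams and so that $\gamma$ is constant near its endpoints; the gluing in this normalized situation is then unambiguous. Extending this deformation continuously over the entire parameter space of triples requires a partition-of-unity argument, analogous to the one used in the proof of \cref{314}. This technical step is the only place where the product hypothesis on $O_1 \cap O_2$ and the triviality of the $G$-action on its $\mathbf{R}$-factor are essential: they provide the collar in which to perform the gluing without having to deform $M_1$ or $M_2$ away from the seam region.
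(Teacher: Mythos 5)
Your route is genuinely different from the paper's (which identifies all four corners with two-sided bar constructions $B(X,\mathcal{M},Y)$ over a gluing monoid $\mathcal{M}$ of manifolds in $\delta$-slabs and then invokes the quasifibration property \cref{hocart} for grouplike $G$-monoids), but as written it has a real gap at its central step. The slice-wise gluing $\bigcup_{\delta\in[a,b]}\gamma(\phi(\delta))\cap\delta^{-1}(\delta)$ is not a smooth manifold for a general path $\gamma$, and the problem is not confined to the seams $\delta=a,b$: even after $\gamma$ has been made smooth in the sense of \cref{smoothapprox} (so that its graph $\Gamma\subset[0,1]\times\pi_R^{-1}(O_1\cap O_2)$ is a manifold submersing over $[0,1]$), your glued set is the preimage of $\Gamma$ under $x\mapsto(\phi(\delta(x)),x)$, and this is a manifold only if that map is transverse to $\Gamma$ — a genericity condition throughout $[a,b]$ that \cref{smoothapprox} does not provide and that must moreover be arranged \emph{continuously in families} of triples $(M_1,M_2,\gamma)$ for the gluing to define a map of spaces. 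Making cylindrical adjustments near the seams and patching with partitions of unity as in \cref{314} does not address this; it is precisely to avoid constructing such a gluing map that the paper instead decomposes manifolds into slices (the easy direction, controlled by Sard's theorem via the forgetful maps from the posets $\mathcal{P}_{\bullet}$) and lets the formal bar-construction machinery produce the homotopy cartesian square.

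Two further points. First, your verification that "restricting the glued $M$ to $O_i$ recovers $M_i$ by construction" is false: $M|_{O_1}$ agrees with $M_1$ only where $\delta\le a$, and for $\delta\ge b$ it is a copy of $M_2$; connecting $(M|_{O_1},M|_{O_2},\mathrm{const})$ back to $(M_1,M_2,\gamma)$ inside the homotopy pullback is an additional homotopy that must be constructed naturally (e.g.\ by sliding the cutoff window to $\pm\infty$ in the $\delta$-coordinate), not a tautology. Second, your argument nowhere uses the standing hypothesis $\dim(W^G)>d$ of this section, whereas the paper's proof of \cref{homotopy_sheaf} relies on it essentially to show the gluing monoid $\mathcal{M}^H$ is connected, hence grouplike; Mayer--Vietoris statements of this type fail for configuration-type monoids without some such group-completion input, so a proof that never invokes it should be treated with suspicion until the missing continuity and transversality arguments are supplied.
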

The assumption on $Q$ prevents wild behavior at infinity.  It is satisfied if $Q$ admits an equivariant proper Morse function with finitely many critical points, or if $Q$ is the interior of an equivariant smooth compact manifold with boundary.

The proof of \cref{homotopy_sheaf} can be summarized
in the following diagram, in which the rightmost square is the one to be shown $G$-homotopy cartesian.
\begin{equation}\label{zigzagsquare}
\begin{tikzcd}[row sep={40,between origins}, column sep={40,between origins}]
      & B(X, \mathcal{M}, Y) \ar{dd}\ar{dl} & & B\mathcal{P}_{X, Y} \ar{ll} \ar{rr} \ar{dd} \ar{dl} & & \PsiRd (O_1 \cup O_2)  \ar{dd} \ar{dl} \\
    B(X, \mathcal{M}, \ast)  \ar{dd} & & B\mathcal{P}_{X} \ar[ll, crossing over] \ar[rr, crossing over]  & & \PsiRd (O_1) \\
      &  B(\ast, \mathcal{M}, Y)  \ar{dl} & & B\mathcal{P}_{Y} \ar{ll} \ar{rr} \ar{dl} & & \PsiRd (O_2) \ar{dl} \\ 
    B(\ast, \mathcal{M}, \ast) & & B\mathcal{P} \ar{ll} \ar{rr}   \ar[from=uu, crossing over] && \PsiRd (O_1\cap O_2) \ar[from=uu,crossing over]
\end{tikzcd}
\end{equation}
After defining the other entries appearing in the diagram, the proof will consist of showing that \cref{hocart} applies to the leftmost square (which will use $\dim(W^G) > d$) and that all horizontal maps are equivariant equivalences (which will use $\dim(W^G) \geq d$).

\newcommand{\translate}{\tau}  %%% for translation, used to be called shifting and denoted $\sh$.

We begin by defining the objects in the leftmost square of~\eqref{zigzagsquare}.  For $a \in \mathbf{R}$, define the \emph{translation map}
$$\translate_a\colon O_1\cap O_2 \to O_1\cap O_2$$ as $\translate_a(x) =  (\delta, f)^{-1}(\delta(x)+a, f(x))$. Let $\pi_R \colon V \to R$ denote orthogonal projection.
For a subset $I \subset [-\infty,\infty]$ we will write $\pi_R^{-1}\delta^{-1}(I) \subset V$ for $\pi_R^{-1}(A)$ where $A = \delta^{-1}(I) \subset O_1 \cup O_2 \subset V$.

\begin{defn}
  Let $\mathcal{M}$ be the following topological $G$-monoid.
    Elements are pairs $(M,a)\in \PsiRd(O_1\cap O_2)\times \mathbf{R}_{> 0}$, satisfying $M\subset \pi_R^{-1}\delta^{-1}(0, a)$. Composition of $(M_1, a_1)$ and $(M_2, a_2)$ is given by $(M,a)$ where 
        \begin{gather*}
    M=M_1 \cup \translate _{a_1} ( M_2 ) \\
    a= a_1 + a_2.
    \end{gather*}
\end{defn}
    
\begin{defn}
  Let $X$ be the space of pairs $(M,a)\in \PsiRd(O_1)\times \mathbf{R}_{> 0}$ such that $M\subset \pi_R^{-1}\delta^{-1}[-\infty, a)$ and
  let $Y$ be the space of pairs $(M, a)\in \PsiRd(O_2)\times \mathbf{R}_{> 0}$ such that $M\subset \pi_R^{-1}\delta^{-1}(-a, +\infty]$. 
    Then the monoid $\mathcal{M}$ acts on $X$ equivariantly from the right, in the following way. If $x=(M_1,a_1)\in X$ and $m=(M_2,a_2)\in \mathcal{M}$ then $xm=(M,a)$ where 
\begin{gather*}
    M=M_1 \cup \translate_{a_1} (M_2) \\
    a = a_1 + a_2.
\end{gather*}
Similarly, $\mathcal{M}$ acts on $Y$ equivariantly from the left the following way. If $m=(M_1,a_1)\in \mathcal{M}$ and $y=(M_2,a_2)\in Y$ then $my=(M,a)$ where 
\begin{gather*}
    M= \translate_{-a}(M_1) \cup M_2 \\
    a = a_1 + a_2.
\end{gather*}
\end{defn}

\begin{lemma}\label{lemma-where-high-dim-is-used}
  Let
  $\mathcal{M}$ be as above and assume $\dim (W^G) > d$. Then $\mathcal{M}^H$ is connected for each $H\leq G$, and in fact
  group-like.  Therefore the leftmost square in~\eqref{zigzagsquare} is $G$-homotopy cartesian.
\end{lemma}
\begin{proof}
    Let $m = (M,a) \in \mathcal{M}$. Consider the projection $\pi_{W^G}\colon M \to W^G$. By Sard's theorem, there exists a regular value $a\in W^G$. Since we are assuming $\dim(W^G) > d$, this means $a$ is not in the image $\pi_{W^G}(M)$. Consider the affine map $L_t\colon W^G \to W^G$ given by $L_t(x) = (1-t)(x-a) + a$, and let $\varphi_t = id_{V-W^G} \oplus L_t \colon V \to V$. Then $m_t = (\varphi_t^{-1} (M), a)$ gives a path from $m$ to $(\varnothing, a)$ which is in the path component of $e$. If $m\in \mathcal{M}^H$ then the path $m_t$ constructed above is in fact a path in $\mathcal{M}^H$, showing that $\mathcal{M}^H$ is path-connected.  Moreover, left or right multiplication by $m$ is now homotopic to multiplication by $(\varnothing,a)$, which is homotopic to the identity.  This shows that $\mathcal{M}^H$ is group-like for any $H \leq G$, so \cref{hocart} applies to the leftmost square in~\eqref{zigzagsquare}.
  \end{proof}

Next we define the entries appearing in the middle square of~\eqref{zigzagsquare}.

\begin{defn}
  Let $\mathcal{P}$ be the following topological
   poset. Objects are pairs $(M, a) \in \PsiRd(O_1\cap O_2) \times \mathbf{R}$ satisfying
    $$
    M \cap \pi_R^{-1}\delta^{-1}(a) = \varnothing.
    $$
    We say
    $(M,a) < (M', a')$ when $M=M'$ and $a < a'$.
    
    Similarly define $\mathcal{P}_X$, $\mathcal{P}_Y$, $\mathcal{P}_{X, Y}$ as posets of pairs $(M,a)$, where $M$ is in $\PsiRd (O_1)$, $\PsiRd(O_2)$ and $\PsiRd(O_1\cup O_2)$ respectively.
\end{defn}

We have restriction functors
\[
\begin{tikzcd}
    \mathcal{P}_{X, Y} \ar[r]\ar[d] & \mathcal{P}_X \ar[d] \\
    \mathcal{P}_Y \ar[r] & \mathcal{P}.
\end{tikzcd}
\]
We'll use the classifying spaces of these posets to approximate between the bar constructions and spaces of manifolds.

\begin{defn}
    Let $\mathcal{P} \to \mathcal{M}$ be the functor given as follows. Every object of $\mathcal{P}$ maps to the unique object in $\mathcal{M}$. A morphism $(M, a_0) < (M, a_1)$ in $\mathcal{P}$ maps to $(\widetilde{M}, \widetilde{a}) \in \mathcal{M}$, where 
    $$\widetilde{a} = a_1-a_0 $$
    $$\widetilde{M} = \translate_{-a_0} M \cap \pi_R^{-1}\delta^{-1}(a_0, a_1). $$
\end{defn}

\begin{lemma}\label{poset_monoid_levelwise}
    The functor $\mathcal{P} \to \mathcal{M}$ induces a levelwise equivariant equivalence on the nerves.
\end{lemma}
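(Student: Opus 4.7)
The plan is to mimic the proof of \cref{315} (which in turn follows \cite[Theorem 3.9]{monoids}) and of \cref{intermediate_poset}, exploiting the crucial structural input of this setting: the diffeomorphism $(\delta,f)\colon O_1\cap O_2 \xrightarrow{\cong} \mathbf{R}\times Q$ isolates a trivial $\mathbf{R}$-factor along which one can shift and concatenate, so any construction carried out by moving points along that direction is automatically $G$-equivariant.

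First I would define an equivariant simplicial section $j\colon N_\bullet \mathcal{M} \to N_\bullet \mathcal{P}$. On $p$-simplices, send $((M_i,b_i))_{i=1}^p \in \mathcal{M}^p$ to
\[
j((M_i,b_i)_i) = \Bigl(\bigcup_{i=1}^p \sh_{a_{i-1}}(M_i),\; 0=a_0 \leq a_1 \leq \cdots \leq a_p\Bigr),
\]
where $a_k = b_1 + \cdots + b_k$. The image of $j$ consists of those $(M,0=a_0,\ldots,a_p)\in N_p\mathcal{P}$ with $M \subset \pi_R^{-1}\delta^{-1}(0,a_p)$. Equivariance of $j$ is clear since $\sh_a$ acts only in the trivial $\mathbf{R}$-factor, and a direct check from the definitions shows that $N_p(\mathcal{P}\to\mathcal{M})\circ j = \mathrm{id}_{N_p\mathcal{M}}$.

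The main step is to exhibit an equivariant homotopy $H_t\colon N_p\mathcal{P}\to N_p\mathcal{P}$, $t\in[0,1]$, from the identity to $j\circ N_p(\mathcal{P}\to\mathcal{M})$. For $(M,a_0,\ldots,a_p)\in N_p\mathcal{P}$, $H_t$ combines two deformations carried out through the diffeomorphism $(\delta,f)$: a translation shifting $a_i \mapsto a_i - t\,a_0$ while applying $\sh_{-ta_0}$ to $M$, and an isotopy $\Phi_t\colon V\to V$ that is the identity on the $Q$ and $R^\perp$ factors and acts on the $\mathbf{R}$-coordinate by a smooth order-preserving stretch $\phi_t$ which is the identity on $[a_0(t),a_p(t)]$ and pushes the complementary intervals to infinity as $t\to 1$. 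One must choose $\phi_t$ so that no level set $\phi_t^{-1}(a_i)$ ever meets $M$, keeping the deformation inside $N_p\mathcal{P}$. At $t=1$, the resulting configuration agrees with $j\circ N_p(\mathcal{P}\to\mathcal{M})(M,a_0,\ldots,a_p)$.

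The main obstacle is a technical point inherited from the non-equivariant case: continuity of $t\mapsto \Phi_t(M)$ in the $C^\infty$-topology on $\Psi_d^R$, which amounts to showing that the portions of $M$ pushed toward infinity leave every compact subset of $\pi_R^{-1}(O_1\cap O_2)$ as $t\to 1$. This is precisely the argument in the proof of \cite[Theorem 3.9]{monoids}. The crucial new input here is purely structural: since $\phi_t$ acts only on the trivial $\mathbf{R}$-factor identified by $(\delta,f)$, both $\Phi_t$ and $H_t$ are $G$-equivariant, so the entire construction restricts to $H$-fixed points for every subgroup $H\leq G$, yielding the claimed levelwise equivariant equivalence on nerves.
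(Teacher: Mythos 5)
Your proposal is correct and follows essentially the same route as the paper: the paper's proof consists of exactly your section $j$ (product of the slices with cumulative sums $0=a_0\leq a_1\leq\cdots\leq a_p$) together with a deferral of the deformation retraction to the analogous earlier arguments (\cref{intermediate_poset}, \cref{315}, and ultimately \cite[Theorem 3.9]{monoids}), whose shift-and-stretch homotopy along the trivial $\mathbf{R}$-factor of $(\delta,f)$ you have simply written out explicitly. Your observation that equivariance is automatic because the whole deformation takes place in the trivially-acted-on $\mathbf{R}$-coordinate is precisely the point the paper relies on.
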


\begin{proof}
    A one-sided levelwise inverse $N_p \mathcal{M} \to N_p \mathcal{P}$ is given by 
    \[
        (M_1, \ldots, M_p, a_1, \ldots, a_p) \mapsto (M, 0, a_1, \ldots, a_p),
    \]
    where $M$ is defined by $(M, a) = (M_1, a_1) \cdots (M_p, a_p) \in \mathcal{M}$.

    The composition $N_p \mathcal{M} \to N_p \mathcal{P} \to N_p \mathcal{M}$ is equal to the identity map, and the composition $N_p \mathcal{P} \to N_p \mathcal{M} \to N_p \mathcal{P}$ is homotopic to the identity map.  The homotopy is again given by sliding, as in \cref{315} and \cref{intermediate_poset}, this time in the $\R$ direction under
        the decomposition $O_1 \cap O_2 \cong Q \times \R$.  Since this diffeomorphism is assumed equivariant for the trivial action on $\R$, we again produce an equivariant homotopy.
\end{proof}

\begin{defn}\label{poset_to_bar}
Let $N_\bullet \mathcal{P}_X \to N_\bullet (X, \mathcal{M}, \ast)$ be the simplicial map given by 
$$(M, a_0, \ldots, a_p) \mapsto (x, a_0, \widetilde{M}_1, \widetilde{a}_1 \ldots, \widetilde{M}_p, \widetilde{a}_p), $$
where 
$$\widetilde{a}_i = a_i - a_{i-1}$$ 
$$ x = \pi_R^{-1}\delta^{-1} [-\infty, a_0) \cap M $$
$$\widetilde{M}_i = \translate_{-a_{i-1}}M \cap \pi_R^{-1}\delta^{-1}(0, \widetilde{a}_i). $$
\end{defn}

\begin{lemma}
    The map $N_\bullet \mathcal{P}_X \to N_\bullet (X, \mathcal{M}, \ast)$ is a levelwise equivariant equivalence.
\end{lemma}
\begin{proof}
    The proof is completely analogous to \cref{poset_monoid_levelwise}.
\end{proof}

Analogously to \cref{poset_to_bar} we can write down simplicial maps between the nerves
\begin{align*}
    N_\bullet \mathcal{P}_Y & \to N_\bullet(\ast, \mathcal{M}, Y) \\
    N_\bullet \mathcal{P}_{X, Y} & \to N_\bullet(X, \mathcal{M}, Y),
\end{align*}
which are levelwise equivariant homotopy equivalences. The maps are given by taking the slices of $M \in N_\bullet \mathcal{P}$ and shifting them to be elements of $\mathcal{M}$. The levelwise inverses are given by inclusions as in \cref{poset_monoid_levelwise}.

\begin{defn}\label{poset_to_psi_forgetful}
    Define the forgetful maps 
    \begin{align*}
    B\mathcal{P} & \to \PsiRd (O_1\cap O_2) \\
    B\mathcal{P}_X & \to \PsiRd (O_1) \\
    B\mathcal{P}_Y & \to \PsiRd (O_2) \\
    B\mathcal{P}_{X, Y} & \to \PsiRd (O_1\cup O_2)
    \end{align*}
    by 
    $$ (M, a_0, \ldots, a_p) \mapsto M. $$
\end{defn}

These 
maps are always defined, but need not be equivariant equivalences unless $\dim(W^G) \geq d$.  For example, it follows from Lemma~\ref{poset_monoid_levelwise} that $B\mathcal{P}^H$ is path connected for all $H < G$, which $(\PsiRd(O_1 \cap O_2))^H$ need not be.

\begin{lemma} \label{forgetful_lemma}
  The forgetful maps in \cref{poset_to_psi_forgetful} are equivariant weak equivalences, provided $\dim(W^G) \geq d$.
\end{lemma}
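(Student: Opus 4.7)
The plan is to give a uniform argument covering all four forgetful maps, in the style of \cref{316} and \cref{410}. For each of the four posets $\mathcal{P}_\bullet \in \{\mathcal{P}, \mathcal{P}_X, \mathcal{P}_Y, \mathcal{P}_{X,Y}\}$ with its corresponding open set $O_\bullet \in \{O_1\cap O_2,\, O_1,\, O_2,\, O_1\cup O_2\}$, and for each subgroup $H \leq G$ and each $q \in \mathbf{N}$, we solve the lifting problem
\[
\begin{tikzcd}
\partial D^q \ar[r] \ar[d, hook] & (B\mathcal{P}_\bullet)^H \ar[d] \\
D^q \ar[r, "f"] \ar[ur, dashed] & \Psi_d^R(O_\bullet)^H.
\end{tikzcd}
\]

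First, using \cref{smoothapprox}, I would homotope $f$ (staying inside $H$-fixed points and keeping the given lift on $\partial D^q$ compatible, as in the proofs of \cref{314} and \cref{316}) so that its graph $\Gamma_f \subset D^q \times V$ is a smooth $H$-invariant submanifold. The second step is to select, for each $x \in D^q$, a finite increasing sequence of values $a_0 < a_1 < \cdots < a_p$ in $\mathbf{R}$ such that each slice $f(x)\cap \pi_R^{-1}\delta^{-1}(a_i)$ is empty, making $(f(x), a_0, \ldots, a_p)$ an element of $(N_p\mathcal{P}_\bullet)^H$. Such values exist by applying Sard's theorem to the smooth map $\delta \circ \pi_R \circ \pi_V \colon \Gamma_f \to \mathbf{R}$ (which has $H$-invariant image since $H$ acts trivially on $\mathbf{R}$): on the compact slice $\{x\}\times V$ the image of this map is compact in $\mathbf{R}$, and we can pick the $a_i$ in the complement. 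Crucially, since the topology on $\Psi_d^R(O_\bullet)$ is of compact-open type, the condition ``$a$ has empty slice'' is open in $x \in D^q$, so each local choice gives a constant lift on some neighborhood $U_x$.

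The third step is to patch. Take a finite subcover $\{U_\alpha\}$ of $D^q$ with locally chosen sequences $a_0^\alpha < \cdots < a_{p_\alpha}^\alpha$. On overlaps, amalgamate the sequences into a common increasing sequence and use a partition of unity subordinate to $\{U_\alpha\}$ to interpolate the simplex coordinates (this is the same convex-combination reparametrization as in the non-equivariant proof of \cite[Theorem 3.10]{monoids}). By working within $H$-fixed points throughout, the resulting extension lands in $(B\mathcal{P}_\bullet)^H$ and matches the given lift on $\partial D^q$ after a further homotopy.

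The main obstacle is the second step: for non-compact $M_x$, the image $\delta(\pi_R(M_x))$ need not be a proper subset of $\mathbf{R}$. I would address this by using compactness of $D^q$ together with the constraint $M_x \subset D_1((W+R)^\bot)\times(W+R)$ to work inside a sufficiently large compact region, and by invoking equivariant Sard on the restriction of $\delta \circ \pi_R \circ \pi_V$ to this region, exactly as in the proofs of \cref{316} and \cref{410}. The bookkeeping for compatible choices across the four forgetful maps is identical because the only difference between $\mathcal{P}, \mathcal{P}_X, \mathcal{P}_Y, \mathcal{P}_{X,Y}$ is the underlying open set $O_\bullet$, and the slicing condition makes sense uniformly whenever $\delta$ is defined (with the convention that values $a \in \mathbf{R}$ never meet $\delta^{-1}(\pm\infty)$).
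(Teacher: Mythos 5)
There is a genuine gap in your second step, and it is exactly the point where this lemma differs from a routine ``choose regular values'' argument. You propose to find, for each $x$, values $a_i$ such that the slice $f(x)\cap\pi_R^{-1}\delta^{-1}(a_i)$ is \emph{already empty}, by applying Sard's theorem to $\delta\circ\pi_R\colon M_x\to\mathbf{R}$ and picking values in the complement of the image. This fails for two reasons. First, the target is $1$-dimensional while $M_x$ is $d$-dimensional, so for $d\geq 1$ a regular value of this map is generically \emph{in} the image (its preimage is a codimension-one submanifold of $M_x$); regular values give you transverse slices, not empty ones. Second, your fallback of ``picking $a$ beyond a compact image'' is unavailable: the constraint $M_x\subset D_1\bigl((W+R)^\bot\bigr)\times(W+R)$ bounds $M_x$ only in the directions orthogonal to $W+R$, and $O_1\cap O_2\cong\mathbf{R}\times Q$ via $(\delta,f)$, so $M_x$ may be unbounded in the $\delta$-direction and $\delta(\pi_R(M_x))$ may be all of $\mathbf{R}$. (Also, $\{x\}\times V$ is not compact.) Concretely: for a generic $M$ there is \emph{no} $a$ with $(M,a)\in\mathrm{Ob}(\mathcal{P})$, so the fiber of the forgetful map over $M$ is empty and the lemma cannot be proved by exhibiting contractible spaces of choices of $a_i$ for the given $f$.

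The paper's argument is forced to \emph{deform $f$ itself}: it constructs paths in $\Psi_d^R(O_1\cap O_2)^H$ starting at $f(x)$ and ending at a manifold $M'$ that is disjoint from $\pi_R^{-1}\delta^{-1}(a_i)$. The mechanism is the hypothesis $\dim(W^G)>d$, which your proposal never uses: Sard applied to $\pi_{W^G}\colon M\to W^G$ produces a regular value $w_i\in W^G$ that, by the dimension count, is \emph{not} in the image, so $M$ misses an entire slab around $\pi_{W+R}^{-1}(\{w_i\})\times\delta^{-1}(a_i)$; one then ``stretches'' this empty slab until it engulfs a neighborhood of $\pi_R^{-1}\delta^{-1}(a_i)$, staying inside the $H$-fixed points since $w_i\in W^G$. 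Your steps 1 and 3 (smoothing via \cref{smoothapprox}, local choices on a finite cover, partition-of-unity patching of simplex coordinates) are fine and match the paper, but without replacing step 2 by the deformation argument the proof does not go through.
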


\begin{proof}
  The 
  proof is similar in all cases, let us outline the case of $B\mathcal{P}$.  By abuse of notation we identify $V = R \times W \times (R + W)^\perp$.  A point in $\PsiRd(O_1 \cap O_2)$ is then a submanifold $M \subset (O_1 \cap O_2) \times W \times (R + W)^\perp$ which is closed as a subset, and satisfies $M \subset (O_1 \cap O_2) \times W \times D_1((R + W)^\perp)$.  Let us first give a careful construction of a path from an arbitrary $M \in \PsiRd(O_1 \cap O_2)$ to a point in the image from $B\mathcal{P}$.

  We have projections
  \begin{equation*}
    \begin{aligned}
      M \xrightarrow{\pi_{W^G} \vert_{M}} W^G\\
      M \xrightarrow{\pi_R} O_1 \cap O_2 \xrightarrow{\delta} \R
    \end{aligned}
  \end{equation*}
  and if $M \in (\PsiRd(O_1 \cap O_2))^H$ for some $H < G$ so that $M$ inherits an action by $H$, then both of these functions are invariant under the action of $H$.  By Sard's theorem there are plenty of regular values for
  \begin{equation}
    \label{eq:2}
    (\pi_{W^G}\vert_{M},\delta \circ \pi_R): M \to W^G \times \R
  \end{equation}
  and by the assumption $\dim(W^G) \geq d = \dim(M)$ these regular values are not in the image.  If $(w_0,a_0) \in W^G \times \R$ is not in the image, the strategy is now to amplify from the image being ``disjoint from $\{(w_0,a_0)\} \subset W^G \times \R$'' to the image being ``disjoint from $W^G \times \{a_0\} \subset W^G \times \R$'', by pushing the parts of $M$ that intersect $\delta^{-1}\pi_R^{-1}(a_0)$ to infinity in the $W^G$-direction, radially away from $w_0$.

  For technical reasons it will be a slight problem
    that~\eqref{eq:2} is not proper: firstly the set of regular values is not necessarily open in $W^G \times \R$ which makes the ``pushing to infinity'' harder to implement, and secondly the condition that $(w,a) \in W^G \times \R$ is regular is not an open condition on $M$ (i.e., the set of $M$'s satisfying it is not an open subset of $\PsiRd(O_1 \cap O_2)$).  Therefore we choose
    a compact subset $K \subset Q$ such that the identity map of $Q$ is equivariantly isotopic to an embedding with image in $K$, and restrict attention to the closed subset
  \begin{equation*}
    M' = M \cap \pi_{W-W^G}^{-1}(D_1(W-W^G)) \cap \pi_R^{-1}(f^{-1}(K)) \subset M.
  \end{equation*}
  The restriction
  \begin{equation*}
    (\pi_{W^G}\vert_{M},\delta \circ \pi_R)\vert_{M'}: M' \to W^G \times \R
  \end{equation*}
  is then a proper map, and the image is a closed subset of $W^G \times \R$.  Sard's theorem now shows that the complement of the image is open and dense, and we may choose $(w_0,a_0) \in W^G \times \R$ and $\epsilon > 0$ such that
  \begin{equation*}
    M' \cap \pi_{W^G}^{-1}(D_\epsilon(w) \times (a-\epsilon,a+\epsilon)) = \emptyset.
  \end{equation*}
  Now choose an equivariant isotopy $t \mapsto \rho_t$ from the identity map of $Q$ to an embedding $Q \to K \subset Q$, another equivariant isotopy $t \mapsto \rho_t'$ from the identity map of $W - W^G$ to an embedding $W - W^G \to D_1(W - W^G)$, and a third equivariant isotopy $t \mapsto \rho_t''$ from the identity map of $W^G$ to an embedding $W^G \to D_{\epsilon,w_0}(W^G)$.  Finally choose $\lambda: \R \to \R$ with $[a_0-\frac\epsilon2,a_0 + \frac\epsilon2] \subset \lambda^{-1}(1)$ and $\mathrm{supp}(\lambda) \subset (a_0-\epsilon,a_0 + \epsilon)$, and assemble these isotopies to an isotopy of equivariant self-embeddings
  \begin{equation*}
    \begin{aligned}
      (\R \times Q) \times W^G \times (W - W^G) \times (R + W)^\perp & \xrightarrow{\rho'''_t} (\R \times Q) \times W \times (R + W)^\perp\\
      ((a,q),w,w',z) & \mapsto ((a,\rho_{t\lambda(a)}(q)),\rho''_{t\lambda(a)}(w),\rho'_{t \lambda(a)}(w'),z),
    \end{aligned}
  \end{equation*}
  where $t \in [0,1]$.  For $t = 0$ it is the identity, and for $t = 1$ it has the property that $\pi_R^{-1}\delta^{-1}([a_0-\frac\epsilon2,a_0+\frac\epsilon2])$ is mapped into the subset
  \begin{equation*}
    ([a_0 - \tfrac\epsilon2,a_0 + \tfrac\epsilon2]\times K) \times D_{\epsilon,w_0}(W^G) \times D_1(W - W^G) \times (R + W)^\perp,
  \end{equation*}
  which is disjoint from $M$.  Moreover it restricts to the constant isotopy outside $\pi_R^{-1}\delta^{-1}([a_0 - \epsilon,a_0 + \epsilon])$.  The isotopy $t \mapsto \rho'''_t$ then gives a path
  \begin{align*}
    [0,1] \to \PsiRd(O_1 \cap O_2)\\
    t \mapsto (\rho'''_t)^{-1}(M),
  \end{align*}
  from $M$ to a manifold $(\rho'''_1)^{-1}(M)$ which is disjoint from $\pi_R^{-1}\delta^{-1}([a_0-\frac\epsilon2,a_0+\frac\epsilon2])$.  In particular $((\rho'''_1)^{-1}(M),a_0)$ is an object of $\mathcal{P}$.

  This shows that the forgetful map $B\mathcal{P} \to \PsiRd(O_1 \cap O_2)$ induces a surjection on $\pi_0$.  We will use a parametrized version of this argument to show that it is in fact a weak equivalence.  Consider a lifting problem
  \begin{equation*}
    \begin{tikzcd}
      \partial D^q \arrow[r] \arrow[d] & B\mathcal{P}^H \arrow[d] \\
      D^q \arrow[r, "f"] \arrow[ru, dashed] & \PsiRd (O_1\cap O_2)^H
    \end{tikzcd}
  \end{equation*}
  with $q \in \mathbb{N}$, in which we wish to find the diagonal arrow after possibly changing the horizontal arrows by  homotopies through commutative squares.  Let us write $f(x) = M_x \subset (O_1 \cap O_2) \times W \times (W+R)^\perp$.  By openness of the conditions in the above argument, and by compactness of $D^q$, we may then choose finitely many $(w_1,a_1), \dots, (w_m,a_m) \in W^G \times \R$ and corresponding  $\epsilon_1, \dots, \epsilon_m > 0$ and open subsets $U_1, \dots, U_m \subset D^q$ such that
  \begin{equation*}
    M_x \cap \pi_{W-W^G}^{-1}(D_1(W-W^G)) \cap \pi_R^{-1}(f^{-1}(K)) \cap \pi_{W^G}^{-1}(D_{\epsilon_i}(w_i) \times (a_i-\epsilon_i,a+\epsilon_i)) = \emptyset
  \end{equation*}
  whenever $x \in U_i$.  After possibly shrinking the $\epsilon_i$ and perturbing the $a_i$, we may arrange that all $\epsilon_i$ equal some $\epsilon$, and that the intervals $[a_i - \epsilon,a_i + \epsilon]$ are disjoint.  Letting $t \mapsto \rho'''_t = \rho^i_t$ be the isotopy of self-embeddings of $(O_1 \cap O_2) \times W \times (R+W)^\perp$ constructed as above with $w_i$ and $a_i$ in place of $w_0$ and $a_0$ gives a homotopy
  \begin{align*}
    [0,1] \times U_i & \to \PsiRd(O_1 \cap O_2)\\
    (t,x) & \mapsto (\rho^i_t)^{-1}(M_x)
  \end{align*}
  from the identity to a map lifting to $U_i \to N_0 \mathcal{P} \subset B\mathcal{P}$.  If we instead use $\rho^i_{t \lambda_i(x)}$ for a bump function $\lambda_i: D^q \to [0,1]$ then the homotopy extends by the constant homotopy outside $U_i$, and since the $\rho^i_t$ commute for different $i = 1, \dots, m$ we may glue these together to a homotopy of maps $[0,1] \times D^q \to \PsiRd(O_1 \cap O_2)$ starting at $f$ and ending at a map lifting to $B\mathcal{P}$.  If we choose the $a_i$ far from any $a \in \R$ appearing in the given map $\partial D^q \to B\mathcal{P}^H$, then there is a compatible homotopy of the entire square after which the diagram admits a diagonal lift.
\end{proof}

\begin{proof}[Proof of \cref{homotopy_sheaf}]
  We
  have now constructed the entire diagram~\eqref{zigzagsquare}, which is a commutative diagram in the category of $G$-spaces, and shown that all horizontal maps are equivariant weak equivalences.  Lemma~\ref{lemma-where-high-dim-is-used} implies that the topological $G$-monoid $\mathcal{M}$ is grouplike, so by \cref{hocart} the leftmost square is $G$-homotopy cartesian and hence the rightmost one is too.
\end{proof}

\subsection{The scanning map is an equivalence}\label{scanning_map_section}

In
this section we use the results of the previous two subsections to prove the following, again following the idea from \cite{segal}.

\begin{proposition}\label{bottommap}
    The scanning map $\PsiRd (S(R)_{\epsilon}) \to \Map \big (S(R), \PsiRd(D_\epsilon(R)) \big)$ in \eqref{segalannulus} is an equivariant equivalence, provided $\dim(W^G) > d$.
\end{proposition}

Before explaining the strategy, let us make the following definition, inspired by \cite[Section 4.1]{chris_sch}.

\begin{defn}
  For a $G$-invariant open subset $O \subset S(R)$, a \emph{scanning function} for $O$ is a smooth $G$-invariant function $\rho: S(R) \to [0,1]$ such that $O = S(R) \setminus \rho^{-1}(0)$, and such that for all $x \in O$ the ball in $R$ of radius $\rho(x)$ is disjoint from $S(R) \setminus O$.

  Let $p_\epsilon \colon S(R)_\epsilon \to S(R)$ denote radial projection $x \mapsto \frac{x}{|x|}$, and for $G$-invariant open subsets $O \subset S(R)$ let
   $\FF(O) \subset \PsiRd(p_\epsilon^{-1}(O)) \times C^\infty(S(R),[0,1])$ be the subspace consisting of pairs $(M,\rho)$ where $\rho$ is a scanning function for $O$.

  We then define an equivariant map
  \begin{equation}
    \label{eq:3}
    \FF(O) \to \Map \big(O,\PsiRd(D_\epsilon(R))\big)
  \end{equation}
  for any $G$-invariant open subset $O \subset S(R)$, as adjoint to
  \begin{equation*}
    \begin{aligned}
      O \times \FF(O) & \to \PsiRd(D_\epsilon(R))\\
      (c,(M,\rho)) & \mapsto t_{c,\rho}^{-1}(M),
    \end{aligned}
  \end{equation*}
  where $t_{c,\rho}: R \to R$ is given by $t_{c,\rho}(x) = \rho(c) \cdot (x + c)$.
\end{defn}

Clearly the forgetful map $\FF(O) \to \PsiRd(p_\epsilon^{-1}(O))$ is an equivariant weak equivalence (the space of scanning functions for $O$ is a convex subspace of a vector space; non-emptiness may be seen by averaging a non-invariant bump function).  For $O = S(R)$ it has the preferred homotopy inverse $M \mapsto (M,1)$.  Under these identifications, clearly the scanning map~(\ref{eq:3}) for $O = S(R)$ is a model for the bottom map in the diagram~(\ref{segalannulus}).

We will use a local-to-global argument to show that the scanning map is an equivariant weak equivalence for many subsets $O \subset S(R)$ including $O = S(R)$, using that both domain and codomain satisfy a homotopy sheaf property.  We begin by proving this for open $G$-invariant $O \subset S(R)$ of a very special form.
\begin{defn}
  A $G$-invariant open subset $O \subset S(R)$ is \emph{elementary} if it is the union of disjoint sets of the form $G.U$, where $U \subset S(R)$ is an open subset contained in some hemisphere, and such that for each $g \in G$ either $gU = U$ or $U \cap gU = \emptyset$.  Let us write $G_U = \{g \in G \mid gU = U\}$.

  We further require that for each $U$ there exists a $G_U$-fixed point $b \in U$ and a $G_U$-equivariant diffeomorphism
  \begin{equation*}
    (R - \R b) \to U.
  \end{equation*}
\end{defn}
If $O = G.U$ for such a $U$ then $G_U = \{g \in G \mid gU = U\}$ is a subgroup of $G$ contained in $G_b$, and the action map induces a $G$-equivariant diffeomorphism $G \times_{G_U} U \xrightarrow{\approx} O$.  For a general elementary $O \subset S(R)$ we have
\begin{equation*}
  \coprod_{i = 1}^k G \times_{H_i} U_i \xrightarrow{\approx} O \subset S(R)
\end{equation*}
for subgroups $H_i \subset G$ acting on $U_i \subset S(R)$.

\begin{lemma}\label{lemma:elementary-subset}
  The scanning map~\eqref{eq:3} is an equivariant weak equivalence when $O \subset S(R)$ is elementary.
\end{lemma}
\begin{proof}
  Both domain and codomain of~(\ref{eq:3}) take disjoint union of $G$-invariant open subsets to product of $G$-spaces, so it suffices to consider the case where $O = G.U$ for a single $U$.   Let us write $H = G_U$ so that our assumptions imply a $G$-equivariant diffeomorphism
  \begin{equation*}
    G \times_H (R - \R x) \xrightarrow{\approx} O.
  \end{equation*}
  It follows that the map
  \begin{equation*}
    \begin{aligned}
      \PsiRd(O) & \to \Map_H(G,\PsiRd(U))\\
      M & \mapsto (g \mapsto U \cap g M)
    \end{aligned}
  \end{equation*}
  is a $G$-equivariant homeomorphism (it is a bijection since a submanifold $M \subset O \times D_1((W+R)^\perp) \times W$ is uniquely determined by its intersection with each of the $G/H$ many copies of $U \times D_1((W+R)^\perp) \times W$).

  Evidently a similar homeomorphism holds for the codomains of the scanning map
  \begin{equation*}
    \Map \big(O,\PsiRd(D_\epsilon(R))\big) \xrightarrow{\approx} \Map_H\bigg(G,\Map \big(U,\PsiRd(D_\epsilon(R))\big)\bigg),
  \end{equation*}
  so it suffices to compare $\PsiRd(U)$ and $\Map(U,\PsiRd(D_\epsilon(R)))$ equivariantly for $H$.  After choosing an $H$-equivariant diffeomorphism $R - \R x \approx U$, a homotopy inverse to the scanning map is defined by evaluating at the origin $0 \in R - \R x$.
\end{proof}

Next we explain what will be an ``induction step'' in the local-to-global argument.
\begin{lemma}\label{lemma:induction}
  Assume $\dim(W^G) > d$ and let $O_1, O_2 \subset S(R)$ satisfy the assumptions of~\cref{homotopy_sheaf}.  If~\eqref{eq:3} is an equivariant weak equivalence for $O = O_1$, $O = O_2$, and $O = O_1 \cap O_2$, then it is also an equivariant weak equivalence for $O = O_1 \cup O_2$.
\end{lemma}
\begin{proof}
  The scanning map~\eqref{eq:3} may be used as the horizontal maps in a diagram
  \begin{equation}
    \label{eq:5}
    \begin{tikzcd}
      \FF(O_1) \arrow[r] \arrow[d] & \FF(O_1 \cap O_2) \arrow[d] & \FF(O_2) \arrow[d] \arrow[l] \\
      \Map \big(O,\PsiRd(D_\epsilon(R))\big) \arrow[r] & \Map \big(O,\PsiRd(D_\epsilon(R))\big) & \Map \big(O,\PsiRd(D_\epsilon(R))\big). \arrow[l]
    \end{tikzcd}
  \end{equation}
  The horizontal maps on the bottom row are the restriction maps, while the maps in the top row are given by
  \begin{equation*}
    (M,a) \mapsto (M \cap O_1 \cap O_2,a \cdot \lambda),
  \end{equation*}
  where $\lambda: S(R) \to [0,1]$ is a scanning function for $O_1 \cap O_2$.
  
  The diagram~\eqref{eq:5} is not quite commutative on the nose, but each square commutes up to a canonical equivariant homotopy.  That suffices to induce an equivariant map of homotopy pullbacks, which will be an equivariant weak equivalence under our assumptions.  Now by \cref{homotopy_sheaf} the canonical map from $\FF(O_1 \cup O_2)$ to the homotopy pullback of the top row is an equivariant weak equivalence, and for elementary reasons the canonical map from $\Map \big(O_1 \cup O_2,\PsiRd(D_\epsilon(R))\big)$ to the homotopy pullback of the bottom row is also an equivariant weak equivalence.  Up to equivariant weak equivalence the induced map of homotopy pullbacks is therefore identified with~\eqref{eq:3} for $O = O_1 \cup O_2$.
\end{proof}

\begin{proof}[Proof of~\ref{bottommap}]  
  The proof is by induction over simplices in an equivariant smooth triangulation of $S(R)$.  This means a simplicial complex $K$ equipped with a $G$-action by simplicial maps, and an equivariant homeomorphism $j: |K| \to S(R)$ such that for any $p \in \N$ and any $p$-simplex $\sigma$ of $K$, the composition $\Delta^p \hookrightarrow |K| \to S(V)$ is a smooth embedding.  For an ordered $p$-simplex $\sigma$ of $K$ we shall use the same notation for the associated map $\sigma: \Delta^p \to |K|$.  We may arrange that the stabilizer of $j\circ \sigma(\Delta^p) \subset S(R)$ as a subset agrees with the pointwise stabilizer (replace $K$ by its barycentric subdivision if not) and denote this stabilizer subgroup by $G_\sigma < G$.  We shall write $b(\sigma) = \sigma(\frac1{p+1}, \dots, \frac1{p+1}) \in S(R)$ for the \emph{barycenter} of $\sigma$.  It comes with canonical neighborhoods  \begin{equation*}
    b(\sigma) \in \mathrm{star}(\sigma) \subset \overline{\mathrm{star}}(\sigma) \subset |K|,
  \end{equation*}
  the open and closed stars of $\sigma$, which are PL homeomorphic to $\R^{r-1}$ and $I^{r-1}$ respectively, where $r = \dim(R)$.  The point $b(\sigma)$ is fixed by $G_\sigma$, and the open and closed stars are invariant under $G_\sigma$.  We may arrange (again by subdividing) that $\mathrm{star}(\sigma) \cap g(\mathrm{star}(\sigma)) = \emptyset$ for $g \not\in G_\sigma$.  Let us write
  \begin{equation*}
    U_\sigma = j(\mathrm{star}(\sigma)) \subset S(R),
  \end{equation*}
  which is an open $G_\sigma$-invariant neighborhood of $j(b(\sigma))$.  It induces up to a $G$-equivariant diffeomorphism
  \begin{equation*}
    G \times_{G_\sigma} U_\sigma \to G.U_\sigma \subset S(R).
  \end{equation*}
  The open subset $U_\sigma \subset S(R)$ is PL homeomorphic to $\R^r \approx T_{j(b(\sigma))} S(R) \approx (R - \R j(b(\sigma)))$, but we shall make the following additional assumption on the triangulation (we do not know whether it is automatic, but we shall explain below that an equivariant smooth triangulation where it holds may be chosen):
  \begin{itemize}
  \item there exists a $G_\sigma$-equivariant diffeomorphism $(R - \R j(b(\sigma))) \to U_\sigma$.
  \end{itemize}
  Under this assumptions the open subset $G.U_\sigma \subset S(R)$ is elementary.

  We shall impose one additional regularity condition on the equivariant triangulation, about the open subset
  \begin{equation*}
    U_\sigma \setminus j\circ \sigma(\Delta^p) = j(\mathrm{star}(\sigma) \setminus \sigma(\Delta^p)) \subset S(R).
  \end{equation*}
  The corresponding subset $\mathrm{star}(\sigma) \setminus \sigma(\Delta^p) \subset |K|$ is the \emph{deleted open star}.  We shall make the following assumption:
  \begin{itemize}
  \item there exists a $G_\sigma$-equivariant diffeomorphism
    \begin{equation}\label{eq:6}
      U_\sigma \setminus j\circ\sigma(\Delta^p) \xrightarrow{(\delta,f)} (0,1) \times (\mathring{\Delta}^p \times S(N))
    \end{equation}
    for some representation $N$, such that the first coordinate extends to a continuous map $\overline{U_\sigma} \setminus j\circ \sigma(\partial \Delta^p) \to [0,1]$ such that $\delta^{-1}(1) = j \circ \sigma(\mathring{\Delta}^p)$ and $\delta^{-1}(0) = \partial \overline{U_\sigma} \setminus j \circ \sigma(\partial \Delta^p)$.
  \end{itemize}
  A slightly weaker statement, with ``equivariant diffeomorphism'' replaced by ``PL homeomorphism'', is in fact automatic.  This may be checked in $|K|$ where we have the usual decomposition $\overline{\mathrm{star}}(\sigma) \approx \sigma(\Delta^p) \ast \mathrm{link}(\sigma)$, which restricts to a $G_\sigma$-equivariant PL homeomorphism
  \begin{equation*}
    \mathrm{star}(\sigma) \setminus \sigma(\Delta^p) \approx (\mathring{\Delta}^p \ast \mathrm{link}(\sigma)) \setminus ((\emptyset \ast \mathrm{link}(\sigma)) \cup (\mathring{\Delta}^p \ast \emptyset)) \cong (0,1) \times \mathring{\Delta}^p \times \mathrm{link}(\sigma),
  \end{equation*}
  and the link of $\sigma$ is PL homeomorphic to the unit sphere of the normal bundle.

  There is an abstract result (see \cite{equi_triangulation}) that any equivariant smooth manifold admit an equivariant smooth triangulation, but in general it seems unclear to us whether it will necessarily satisfy the two extra conditions.  In the case of a unit sphere in an orthogonal representation this question can be circumvented by choosing an equivariant triangulation by \emph{spherical simplices}, for which the conditions may be checked by hand.  See \cref{rmrk:nice-triangulation} below for more details.
  
  Let us choose an equivariant smooth triangulation with the above two regularity properties, and continue with the proof of the Proposition.  For each subcomplex $\Lambda \subset K$ we have the open subset
  \begin{equation*}
    O^\Lambda = S(V) \setminus j(|\Lambda|),
  \end{equation*}
  and we shall consider the statement that the scanning map~\eqref{eq:3} is an equivariant weak equivalence when $O = O^\Lambda$.  This is of course true for $\Lambda = K$ where $O^\Lambda = \emptyset$ and is also true when $\Lambda \subset K$ is the $(d-1)$-skeleton, since $O^\Lambda$ is then the union of (the open stars of) the top dimensional simplices, which is elementary by our discussion above.

  Let us now assume $\Lambda \subset K$ and let $\sigma < \Lambda$ be a simplex which is maximal (not a proper face of any other simplex in $\Lambda$).  Then $\Lambda \setminus \{\sigma\} \subset \Lambda$ is again a simplicial complex, and $\Lambda \setminus G.\sigma$ is an equivariant subcomplex.  We also have subcomplexes
  \begin{align*}
    K_\sigma &= \{ \tau < K \mid \text{$g.\sigma \not \subset \tau$ for any $g \in G$}\}\\
    K'_\sigma & = K_\sigma \cup \Lambda = K_\sigma \cup G\sigma,
  \end{align*}
  for which $O^{K_\sigma} = S(V) \setminus j(|K_\sigma|)$ equals $G.j(\mathrm{star}(\sigma)) \cong G \times_{G_\sigma} U_\sigma$, and hence by the discussion above is elementary.

  Now observe that the intersection
  \begin{equation*}
    O^{K_\sigma} \cap O^\Lambda = S(R) \setminus j(|K_\sigma \cup \Lambda|) = O^{K'_\sigma}
  \end{equation*}
  equals the subset $G \times_{G_\sigma} (U_\sigma \setminus j \circ \sigma(\Delta^p)) \cong G.(U_\sigma \setminus j \circ \sigma(\Delta^p)) \subset S(R)$, while
  \begin{equation*}
    O^{K_\sigma} \cup O^\Lambda = O^{\Lambda \setminus G.\sigma}.
  \end{equation*}
  Since both $\Lambda$ and $K'_\sigma$ have strictly more simplices, we may assume by downwards induction that the scanning map~(\ref{eq:3}) is an equivariant weak equivalence for both $O = O^{\Lambda}$ and $O = O^{K'_\sigma}$, and we have already seen that it is for $O = O^{K_\sigma}$ since that subset is elementary.  The $G_\sigma$-equivariant diffeomorphism~(\ref{eq:6}) induces up to a $G$-equivariant diffeomorphism which shows that Lemma~\ref{lemma:induction} applies for $O_1 = O^\Lambda$ and $O_2 = O^{K'_\sigma}$.  The conclusion of that Lemma is then that the scanning map is also an equivariant weak equivalence for $O = O^{\Lambda \setminus G.\sigma}$ and by induction we conclude it is an equivariant weak equivalence for $O = O^\emptyset = S(R)$.
\end{proof}

\begin{rmrk}\label{rmrk:nice-triangulation}
  As
  promised, let us discuss how to construct an equivariant triangulation of $S(R)$ with the good properties above.  This is presumably well known to experts but we were unable to find a reference so we indicate a construction here.
  
  The spherical geometry is used to construct canonical simplices with a given set of vertices: if $\sigma = (x_0, \dots, x_p) \in S(R)^{p+1}$ is an ordered tuple of unit vectors which are linearly independent in $R$ and are contained in an open hemisphere (i.e.\ there exists an $x \in S(R)$ with $\langle x,x_i\rangle > 0$ for all $i$) then the map
  \begin{equation}\label{eq:9}
    \begin{aligned}
      j_\sigma: \Delta^p & \to S(R)\\
      (t_0, \dots, t_p) & \mapsto \frac{t_0 x_0 + \dots + t_p x_p}{|t_0 x_0 + \dots + t_p x_p|}
    \end{aligned}
  \end{equation}
  is well defined and injective, and its image is the spherical convex hull of the subset $\{x_0, \dots, x_p\} \subset S(R)$.  Let us say that a simplex $j: \Delta^p \to S(R)$ is \emph{spherical} if it is of this form.  As we shall recall shortly, it is not hard to construct an equivariant triangulation in which all simplices are spherical, but let us first discuss some convenient consequences of this additional condition.

  Let $j: \Delta^p \to S(R)$ be a spherical simplex with vertices $x_i = j(e_i)$ where $e_i \in \Delta^p$ is the $i$th vertex.  If $b \in S(R)$ is a point such that $\langle b,x_i\rangle > 0$ for all $i$, then $j$ may be factored as $\Delta^p \to (R - \R b) \to S(R)$, where the second map is the \emph{gnomonic projection}
  \begin{equation}\label{eq:8}
    \begin{aligned}
      R - \R b & \to S(R)\\
      y & \mapsto \frac{b + y}{|b + y|}.
    \end{aligned}
  \end{equation}
  The gnomonic projection sends $0 \mapsto b$ and defines a diffeomorphism from $R - \R b$ onto the open hemisphere $\{y \in S(R) \mid \langle y,b\rangle > 0\}$, so $j$ factors uniquely when its image is contained in that hemisphere.  In general the inverse of the gnomonic projection takes hemispheres in $S(R)$ to affine half-spaces in $R - \R b$, and hence the image $j(\Delta^p)$ is taken to a euclidean simplex in $R - \R b$, which in particular is a closed and convex subset.

  A particularly useful choice is $b = b_\sigma = j(\frac1{p+1},\dots,\frac1{p+1})$, which has the convenient property that it is fixed by any $g \in G_\sigma$, where $G_\sigma < G$ denotes the subgroup  stabilizing $\{x_0, \dots, x_p\} \in 2^{S(V)}$.  For this choice $b = b_\sigma$ the factorization through the gnomonic projection $R - \R b_\sigma \to S(R)$ is equivariant for $G_\sigma$.

  Let us now consider an equivariant triangulation $j: |K| \to S(R)$ in which all simplices are spherical and sufficiently small.  As in any triangulation of a manifold the closed star of a simplex $\sigma < K$ is homeomorphic to closed disk, but in our case we may conclude that the open star
  \begin{equation*}
    |K| \supset \mathrm{star}(\sigma) \xrightarrow{\approx} \bigcup_{\tau: \sigma \leq \tau} j(\mathring{\Delta}^\tau) \subset S(V)
  \end{equation*}
  is \emph{diffeomorphic} to $R - \R b_\sigma$ in the smooth structure it inherits as an open subset of $S(V)$, equivariantly for $G_\sigma$.  Indeed, under the gnomonic projection the open star is identified with an open subset $\Omega \subset R - \R b_\sigma$ which is star-shaped around the origin.  The usual rescaling argument (which we learned from \cite{ball}, see there for more details) produces a diffeomorphism
  \begin{align*}
    \Omega & \to R - \R b_\sigma\\
    x & \mapsto \lambda(x) \cdot x,
  \end{align*}
  where $\lambda: \Omega \to [1,\infty)$ is a rescaling factor defined by
  \begin{equation*}
    \lambda(x) = 1 + \big(\int_0^1 \frac{dt}{\phi(tx)} \big)^2 |x|^2,
  \end{equation*}
  where $\phi: (R - \R b_\sigma) \to [0,\infty)$ is a smooth function with $\phi^{-1}(0) = (R - \R b_\sigma) \setminus \Omega$.  If we choose $\phi$ to be $G_\sigma$-equivariant (by averaging) then the diffeomorphism is also equivariant.  This proves the first regularity condition.

  The embedding $\Delta^p \hookrightarrow (R - \R b_\sigma)$ lands in a $p$-dimensional linear subspace which we will denote $T_\sigma$, and whose orthogonal complement we will denote $N_\sigma$.  Both are invariant under $G_\sigma$, and as abstract representations may be identified with the tangent and normal spaces of $j(\Delta^p) \subset S(V)$ at the barycenter.  The rescaling diffeomorphism $\mathrm{star}(\sigma) \approx R - \R b_\sigma$ then takes the open simplex $\mathring{\Delta}^\sigma \subset \mathrm{star}(\sigma)$ diffeomorphically onto $T_\sigma \subset R - \R b_\sigma$, and hence the deleted open star $\mathrm{star}(\sigma) \setminus \mathring{\Delta}^\sigma$ is taken diffeomorphically onto the open subspace
  \begin{equation}
    \label{eq:11}
    (R - \R b_\sigma) \setminus T_\sigma \approx T_\sigma \times (N_\sigma \setminus \{0\}) \approx T_\sigma \times S(N_\sigma) \times (-\infty,\infty),
  \end{equation}
  where $S(N_\sigma)$ denotes the unit sphere in $N_\sigma$ and the diffeomorphism $N_\sigma \setminus \{0\} \approx S(N_\sigma) \times (-\infty,\infty)$ is by ``polar coordinates'' $x \mapsto (\frac{x}{|x|},\log(|x|))$.  This diffeomorphism is again equivariant for $G_\sigma < G$,  where the action on $(-\infty,\infty)$ is trivial.

  A mild variation of the rescaling argument instead rescales only in the direction  $N_\sigma$ parametrized by $\mathring{\Delta}^\sigma$, and results in a diffeomorphism
  \begin{equation*}
    \mathrm{star}(\sigma) \setminus \mathring{\Delta}^\sigma \xrightarrow{\approx} \mathring{\Delta}^\sigma \times S(N_\sigma) \times (-\infty,\infty),
  \end{equation*}
  whose last coordinate $\delta: \mathrm{star}(\sigma) \setminus \mathring{\Delta}^\sigma \to (-\infty,\infty)$ extends to a continuous map
  \begin{equation*}
    \overline{\mathrm{star}}(\sigma) \setminus \partial \Delta^\sigma \to [-\infty,\infty].
  \end{equation*}
  taking the value $-\infty$ on $\mathring{\Delta}^\sigma$ and the value $+\infty$ on the remaining boundary of the closed star, which can be identified with
  \begin{equation*}
    (\mathrm{Link}(\sigma) \ast \partial \Delta^\sigma) \setminus \partial \Delta^\sigma.
  \end{equation*}

  Finally, let us review how a $G$-equivariant triangulation $j: |K| \to S(R)$ all of whose simplices are spherical may be constructed, e.g.\ using \emph{Voronoi cells}.  Start with a finite $G$-invariant subset $X \subset S(R)$ which is sufficiently fine (no ball of radius $\epsilon$ around any point in $S(R)$ is disjoint from $X$, for any given $\epsilon> 0$).  The Voronoi cell of $x \in X$ is then the subspace
  \begin{equation*}
    C_x = \{y \in S(R) \mid \mathrm{dist}(y,x) = \mathrm{dist}(y,X)\}.
  \end{equation*}
  This space is also defined by linear inequalities as follows
  \begin{equation*}
    C_x = \bigcap_{x' \in X} \{y \in S(R) \mid \langle y,x - x'\rangle \geq 0\},
  \end{equation*}
  showing in particular that it is closed and geodesically convex provided it contains no pair of antipodal points (which holds for sufficiently small $\epsilon$).  For a non-empty $A \subset X$ we write $C_A = \cap_{x \in A} C_x$ and define $J \subset 2^X \setminus \{\emptyset\}$ as
  \begin{equation*}
    J = \{A \subset X \mid C_A \neq \emptyset\}.
  \end{equation*}
  The $C_A \subset S(R)$ with $A \in J$ form the closed cells in a cell decomposition of $S(R)$.  Moreover, $A \subset A'$ if and only if $C_A \supset C_{A'}$, i.e.\ $J$ ordered by inclusion may be identified with the (opposite of) the poset of cells.  The barycentric subdivision is a simplicial complex, and we shall use that as $K$.  More precisely, we consider the map
  \begin{align*}
    N_0(J) & \to S(R)\\
    A & \mapsto \frac{b(C_A)}{|b(C_A)|},
  \end{align*}
  where $b(C_A) \in R$ is the barycenter of $C_A$, i.e.\ the average over $S(R)$ of the function $x \mapsto \chi_{C_A}(x) \cdot x$.  There is a unique extension to a map $|N_\bullet(J)| \to S(R)$ which is spherical on each non-degenerate simplex, and this map is a homeomorphism.  We then let $K$ be the ordered simplicial complex whose $p$-simplices are the non-degenerate elements of $N_p(J)$.
\end{rmrk}

\subsection{Finishing the proof}\label{finishing_section}

We finish the proof of \cref{eqdeloop}, asserting that 
$$\alpha_R\colon \psi_d(V,W)\to \Omega^R \psi_d (V, W+R)$$
is an equivariant equivalence when $\dim(W^G) > d$.

Using \cref{bottommap} we know that the horizontal maps in \eqref{segalannulus} are equivariant equivalences, so the diagram looks like
\begin{equation}\label{segal_annulus_mapping_models}
 \begin{tikzcd}
        \PsiRd (\overline{D_{1}(R)}_\epsilon) \arrow[d] \arrow[r, "\simeq"] & \Map \big (\overline{D_1(R)}, \PsiRd(D_\epsilon(R)) \big) \arrow[d] \\
        \PsiRd (S(R)_{\epsilon}) \arrow[r, "\simeq"]  & \Map \big (S(R), \PsiRd(D_\epsilon(R)) \big).
 \end{tikzcd}
 \end{equation}
The right vertical map is an equivariant fibration with fiber $\Omega^R \psi_d (V, W+R)$.  The left vertical map is not a fibration, but the homotopy fibers may be analyzed as in the proof of Proposition~\ref{homotopy_sheaf}.  That proof achieves a bit more than proving the square is $G$-homotopy cartesian, it also determines the fibers in both vertical and horizontal directions up to equivariant weak equivalence: in the notation of~(\ref{zigzagsquare}) they are $X$ and $Y$, respectively.  In particular, when $O_1 = \overline{D_1(R)}_\epsilon$ and $O_2 = O_1 \cap O_2 = S_1(R)_\epsilon$, the front faces of the cubes in \cref{zigzagsquare} become
\begin{equation}\label{segal_annulus_bar_models}
\begin{tikzcd}
        B(X, \mathcal{M}, \ast) \arrow{d}{p_\mathcal{M}} & 
        B\mathcal{P}_X \arrow{l}[swap]{\simeq} \arrow{r}{\simeq} \arrow{d}{p_\mathcal{P}} & \PsiRd(\overline{D_1(R)}_\epsilon) \arrow{d}
    \\
        B(\ast, \mathcal{M}, \ast) & 
        B\mathcal{P} \arrow{l}[swap]{\simeq} \arrow{r}{\simeq} &  
        \PsiRd(S_1(R)_\epsilon).
\end{tikzcd}
\end{equation}
For the function $\delta \colon O_1 \cap O_2 \to (-\infty, +\infty)$ in \cref{homotopy_sheaf} we can take distance from the origin $O_1 \cap O_2 \to (1-\epsilon, 1+\epsilon)$ and rescale to $(-\infty, +\infty)$, then $X$ is identified with the space of pairs $(M,a)$ with $a \in (1-\epsilon,1+\epsilon)$ and $M \in \PsiRd(O_1)$ has $M \subset \pi_R^{-1}(D_a(R))$.  There is an embedding $\psi_d(V,W) \hookrightarrow X$, identifying $\psi_d(V,W)$ with the subspace where $a=1$, and this embedding is obviously an equivariant weak equivalence.  Therefore~(\ref{segal_annulus_bar_models}) implies an equivariant weak equivalence $\psi_d(V,W) \simeq \Omega^R \psi_d (V, W+R)$.

We are now almost done: we wanted to show that the particular map $\alpha_R: \psi_d(V,W) \to \Omega^R \psi_d(V,W+R)$ is an equivalence, what remains is to identify $\alpha_R$ with the weak equivalence just established, as morphisms in the $G$-equivariant homotopy category.
Let
$\psiRd(D_a(R))$ denote the subspace of manifolds $M \in \psi_d(V, W + R)$ such that $M \subset \pi_R^{-1}(D_a(R))$. Then in the case above, $X$ is the space of pairs $(M, a)$ such that $a \in (1-\epsilon, 1+\epsilon)$ and $M \in \psiRd(D_a(R))$. The fibers of the middle and right hand vertical maps in \eqref{segal_annulus_bar_models} are both $\psiRd(D_{1-\epsilon}(R))$.

Composing the two diagrams above and investigating the fibers and homotopy fibers, we end up with the following diagram.
\[
\begin{tikzcd}
        X \arrow[d, "\simeq"] \arrow[r, leftrightarrow, "\simeq"]&
        \psiRd (D_{1-\epsilon}(R)) \arrow[r, "\mathrm{scan}"] \arrow[d] &
        \Omega^R \PsiRd(D_\epsilon(R)) \arrow[d, "\simeq"]
    \\
        \hofib(p_\mathcal{M}) \arrow[d] &
        \hofib(p_\mathcal{P}) \arrow[d] \arrow[r] \arrow{l}[swap]{\simeq} &
        \hofib(\text{res}) \arrow[d]
    \\
        B(X, \mathcal{M}, \ast) \arrow{d}{p_\mathcal{M}} & 
        B\mathcal{P}_X \arrow{l}[swap]{\simeq} \arrow[r, "\simeq"] \arrow{d}{p_\mathcal{P}} & 
        \Map \big (\overline{D_1(R)}, \PsiRd(D_\epsilon(R)) \big) \arrow{d}{\text{res}} 
    \\
        B(\ast, \mathcal{M}, \ast) & 
        B\mathcal{P} \arrow{l}[swap]{\simeq} \arrow[r, "\simeq"] &  
        \Map \big (S(R), \PsiRd(D_\epsilon(R)) \big)
\end{tikzcd}
\]
Here each column has the form $\fib(p) \to \hofib(p) \to A \xrightarrow{p} B$, and the horizontal arrows are (composites of) maps defined above. This shows that $\mathrm{scan}$ is an equivariant equivalence.

The following lemma allows us to relate our results to the map $\alpha_R$, and thus finishing the proof of \cref{eqdeloop}.

\begin{lemma}\label{homotopyfibers}
    There is a diagram of the form
    \[
    \begin{tikzcd}
        \psi_d(V,W) \ar[d, "\simeq"] \ar[r, "\alpha_R"] & \Omega^R \psi_d (V, W+R) \ar[d, "\simeq"] \\
        \psiRd (D_{1-\epsilon}(R)) \ar[r, "\mathrm{scan}"] & \Omega^R \PsiRd(D_\epsilon(R)),
    \end{tikzcd}
    \]
    commuting up to equivariant homotopy.
\end{lemma}
\begin{proof}
  Let us first point out that $\Omega^R$ in the upper right corner means pointed maps from $S^R$, the one-point compactification of $R$, while in the lower right corner it means pointed maps from $\overline{D_1(R)}/S(R) = D_1(R) \cup\{\infty\}$.  We compare these using the equivariant homeomorphism $D_1(R) \cup\{\infty\} \to S^V$ extending $r \mapsto -(1 - \|r\|)^{-1} r$,
  and let $\psi_d(V,W + R) \to \mathcal{F}(D_\epsilon(R))$ be $M \mapsto M \cap \pi_R^{-1}(D_\epsilon(R))$.  
  We take the vertical map on the right to be given by pre- and post-composing with these.  For $M \in \psi_d(V,W)$, the right vertical map then takes $\alpha_R(M)$ to the extension of
  \begin{align*}
    D_1(R) & \to \mathcal{F}(D_\epsilon(R))\\
    r & \mapsto \varphi_r^{-1}(M) \cap \pi_R^{-1}(D_\epsilon(R)),
  \end{align*}
  where $\varphi_r(x) = x + (1 - \|r\|)^{-1} r$ for $x \in \pi_R^{-1}(D_\epsilon(R))$.

  As the left vertical map we take the scaling $M \mapsto (1-\epsilon) M$, which is taken to the extension of
  \begin{align*}
    D_1(R) & \to \mathcal{F}(D_\epsilon(R))\\
    r & \mapsto \big(t_r^{-1}((1 - \epsilon) M)\big) \cap \pi_R^{-1}(D_\epsilon(R)),
  \end{align*}
  by the scanning map; recall that $t_r(x) = x + r$.  We may now define an equivariant path between these two which at time $t \in [0,1]$ is the extension of $r \mapsto T_{r,t}^{-1}(M)$, where
  \begin{equation*}
    T_{r,t}(x) = \frac{1 - t \epsilon}{1 - \epsilon} \bigg( x + \frac{1 + (1-t)\|r\|}{1 + \|r\|} r\bigg) \qedhere
  \end{equation*}
\end{proof}

\section{Spaces of manifolds and affine Grassmanians}\label{sec:affine_grassmanian}

As the final ingredient to \cref{centralthm}, we prove \cref{affine_grassmanian}. That is, we construct an equivariant equivalence $\psi_d(V, V) \simeq \MTO_d(V)$ between the space of unbounded manifolds in $V$ and the affine Grassmanian of $d$-planes in $V$.

Recall that
$$ \MTO_d(V)=\mathrm{Th} \left (
  \begin{tikzcd} 
 \xi_V^\bot \arrow{d}{} \\
 \mathrm{Gr}_d(V)
 \end{tikzcd}
 \right )
 $$
\begin{defn}
    Let $q\colon \MTO_d(V) \to \psi_d(V, V)$ be the equivariant map defined as follows. A point $L \in \xi_V^\bot = \MTO_d(V)\setminus \{ \infty \}$ can be identified with an affine $d$-plane in $V$, which is a $d$-dimensional submanifold, hence a point in $\psi_d(V, V)$. We map $\infty \in \MTO_d(V)$ to $\varnothing \in \psi_d(V, V)$, which defines a continuous map because of the way we defined the topology on $\psi_d(V, V)$.
\end{defn}

\begin{lemma}
    The map $q\colon \MTO_d(V) \to \psi_d(V, V)$ is an equivariant equivalence.
\end{lemma}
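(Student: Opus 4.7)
The plan is to realize $MTO_d(V)$ as an equivariant deformation retract of $\psi_d(V,V) = \Psi_d(V)$ — the equality holds because the containment condition $M\subset D_1(V-V)\times V = \{0\}\times V$ is vacuous when $W=V$. Under this identification, $q$ includes the subspace of affine $d$-planes (together with $\varnothing$) into $\Psi_d(V)$, and the task is to show this inclusion is an equivariant weak equivalence.

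The main construction I would use is a closest-point linearization. For a non-empty $M\in\Psi_d(V)$ with a unique minimizer $p\in M$ of $|x|^2$, one has $p\perp T_pM$, so the pair $(T_pM, p)$ lies in $\xi_V^\bot$ and determines the affine plane $p+T_pM$. The homotopy
\[
H(t, M) = p + (1-t)^{-1}(M - p), \quad t \in [0, 1),
\]
rescales $M$ about $p$; in the $t\to 1^-$ limit, taken in the compact-$C^\infty$ topology on $\Psi_d(V)$, $H(t,M)$ converges to the affine tangent plane $p+T_pM$, which is $q(T_pM, p)$. Setting $H(t,\varnothing) = \varnothing$ and arranging that manifolds whose closest points diverge to infinity are sent to $\varnothing$ at $t=1$ produces a retraction $r\colon \Psi_d(V)\to MTO_d(V)$ with $q\circ r\simeq \mathrm{id}$ via $H$. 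Equivariance is automatic because norm, closest point, tangent plane, and scaling all commute with the orthogonal $G$-action on $V$; in particular, if $M$ is $g$-invariant and its closest point $p(M)$ is unique, then $p(M)$ is $g$-fixed by symmetry of $|x|^2$, and the whole construction $r$ is $G$-equivariant.

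The main obstacle is that the closest-point map $M\mapsto p(M)$ fails to be single-valued or continuous on all of $\Psi_d(V)$ — it breaks down where the distance function on $M$ attains its minimum at more than one point. I expect the fix to follow the pattern of the non-equivariant argument in \cite{monoids}: either replace $\Psi_d(V)$ by an auxiliary bundle space of pairs (manifold, candidate base point), where the required point exists tautologically and the retraction is defined fiberwise, or exploit that the compact-$C^\infty$ topology is coarse enough to render the multi-valued ambiguity homotopically invisible after a small perturbation. Once the equivariant homotopy $H$ is constructed, the check that $q^H$ is a weak equivalence for each $H\leq G$ reduces, via the decomposition $\mathrm{Gr}_d(V)^H = \coprod_{[\rho]} \mathrm{Gr}_d^\rho(V)$ indexed by conjugacy classes of $\rho\colon H\to O(d)$, to the non-equivariant statement applied componentwise.
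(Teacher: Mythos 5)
Your core geometric idea -- closest-point linearization $M \mapsto p + T_pM$, and the observation that for an $H$-invariant $M$ with a \emph{unique} closest point $p$ that point must lie in $V^H$ -- is exactly the right one, and it is the heart of the argument the paper uses. But the proposal has a genuine gap at its central step: the global deformation retraction $r\colon \Psi_d(V)\to MTO_d(V)$ you posit does not exist as written, because the closest-point map is neither single-valued nor continuous on all of $\Psi_d(V)$, and there is no continuous global choice resolving the ambiguity (consider a sphere centered at the origin, or a manifold whose distance function to $0$ has two competing minima that exchange roles along a path). You flag this obstacle, but neither of your two proposed fixes is carried out, and the second one (that the compact-$C^\infty$ topology renders the ambiguity ``homotopically invisible'') is not how the issue is resolved. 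The actual resolution, following the non-equivariant argument of Galatius that the paper cites, is not to repair the global retraction but to abandon it: one covers $\psi_d(V,V)$ by the two $G$-invariant open sets $U_0 = \{M : 0\notin M\}$ and $U_1 = \{M : M \text{ has a unique non-degenerate closest point to } 0\}$, shows that $q^{-1}(U_0)\to U_0$, $q^{-1}(U_1)\to U_1$ and $q^{-1}(U_0\cap U_1)\to U_0\cap U_1$ are each equivariant weak equivalences ($U_0$ and $q^{-1}(U_0)$ are equivariantly contractible by pushing off to infinity; on $U_1$ your linearization homotopy is well-defined and equivariant), and then concludes by a gluing lemma for weak equivalences over an open cover. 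Your homotopy $H$ is correct precisely on $U_1$, where uniqueness and non-degeneracy make it continuous.

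A secondary issue: your closing reduction ``via $\mathrm{Gr}_d(V)^H = \coprod_{[\rho]}\mathrm{Gr}_d^{\rho}(V)$ \ldots to the non-equivariant statement applied componentwise'' does not go through as stated. The target of $q^H$ is $\psi_d(V,V)^H$, the space of $H$-invariant submanifolds of $V$, which does not decompose into non-equivariant $\psi_d$-spaces indexed by representations; one cannot simply quote the $G=1$ case fiberwise. The fixed-point check has to be done by verifying that the contractions of $U_0$ and the linearization homotopy on $U_1$ preserve $H$-fixed points -- which they do, by the equivariance of the norm and of the closest point, as you correctly observe.
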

\begin{proof}
    The proof is identical to that of \cite[Lemma 6.1]{gal11}. We split $\psi_d(V, V)$ as the pushout of open sets $U_0 \xleftarrow{} U_{01} \xrightarrow{} U_1$. Here $U_0 \subset \psi_d(V, V)$ is the open subset of manifolds $M\subset V$ such that $0\not \in M$. The subset $U_1 \subset \psi_d(V, V)$ consists of manifolds $M \subset V$ with a unique non-degenerate closest point to the origin, and $U_{01} = U_0 \cap U_1$. 
    The subsets $U_0, U_1, U_{01}$ are $G$-invariant open subsets, and the restrictions of $q$
    \begin{align*}
        q^{-1}(U_0) \to U_0 \\
        q^{-1}(U_1) \to U_1 \\
        q^{-1}(U_{01}) \to U_{01}
    \end{align*}
    are all equivariant equivalences. The spaces $q^{-1}(U_0)$ and $U_0$ are both equivariantly contractible. For $M\in U_1^H$, notice that the unique closest point $p\in M$ must be contained in $V^H$ (since any point in the orbit of $p$ is closest to the origin), hence the deformation retraction described in \cite[Lemma 6.1]{gal11} remains inside the fixed point space. The same applies to the restriction to $U_{01}$.
\end{proof}

\section{Tangential structures}\label{sec:tangential_structures}

In this section we briefly discuss a variant of \cref{centralthm} involving tangential structures. For trivial $G$ this is discussed in \cite[Section 5]{gmtw}.

Let $M$ be a $d$-dimensional smooth manifold and $G$ a finite group. Write $\GL_d = \GL_d(\mathbf{R})$, and let $\Fr(M)$ denote the frame bundle of $M$. That is, $\Fr(M) \to M$ is the principal $\GL_d$-bundle of bases of the tangent bundle $TM\to M$. Let $\GL_d$ act on $\Fr(M)$ on the left by change of basis.

\begin{defn}
  Let $\Theta$ be a space with a left action of $\GL_d \times G$. A $\Theta$-structure $\ell$ on $M$ is a $\GL_d$-equivariant map $\ell\colon \Fr(TM) \to \Theta$. If $G$ acts smoothly from the left on $M$, there is an induced left action of $G$ on $\Fr(TM)$ by differentiating the action on $M$.  This action commutes with the $\GL_d$ action so we view $\Fr(TM)$ as a left $(\GL_d \times G)$-space, and say that the $\Theta$-structure on $M$ is \emph{equivariant} if $\ell$ is $\GL_d \times G$-equivariant.
\end{defn}

Fix the dimension $d\geq 0$, a finite group $G$ and a $(\GL_d \times G)$-space $\Theta$. Let $\Gamma = \GL_d \times G$. The following is the definition of the equivariant cobordism category of manifolds with tangential structures.

\begin{defn}
    Let $\mathcal{C}_\Theta^{G}$ be the following topologically enriched category. Objects are pairs $(M, \ell)$, where $M\in \mathrm{Ob}(\mathcal{C}_d^G)$ as in \cref{equivariant_cobordism_category} and $\ell\colon \Fr(TM\oplus 1)\to \Theta$ is an equivariant $\Theta$-structure. 
    
    The morphism space between $(M_0, \ell_0)$ and $(M_1, \ell_1)$ is formed by triples $(N, r, \ell)$ where $(N, r) \in \mathcal{C}_d^G(M_0, M_1)$ and $\ell\colon \Fr(TN) \to \Theta$ is an equivariant $\Theta$-structure that restricts to $\ell_0$ and $\ell_1$ on $\partial N$.
    
    The homotopy type of morphism spaces may be described as
    $$
    \mathcal{C}_\Theta^G(M_0, M_1) \simeq \coprod\limits_L B\Diff_\Theta^G (L, \partial L), 
    $$
    where the disjoint union is over smooth equivariant cobordisms
    $L$ 
    between $M_0$ and $M_1$, one in each diffeomorphism class relative to $\partial L = M_0 \amalg M_1$. For such $L$, we write $\Diff^G(L, \partial L)$ for the topological group of equivariant diffeomorphisms that restrict to the identity in a neighborhood of $\partial L$, and $B\Diff_\Theta^G(L, \partial L)$ denotes the homotopy quotient
    $$
        B\Diff_\Theta^G(L, \partial L) = \Map^\partial_{G} \left(\Fr(TL), \Theta \right ) \hquotient \Diff^G(L, \partial L), 
    $$
    where $\Map^\partial_{G} \left(\Fr(TL), \Theta \right )$ is the space of equivariant $\Theta$ structures on $L$, fixed near the boundary (omitting the boundary conditions from the notation).
\end{defn}

\newcommand{\basespace}{B}

Now we introduce the equivariant orthogonal spectrum $MT\Theta$. For an orthogonal $G$-representation $V$ let $\basespace(V) = (\mathrm{St}_d(V) \times \Theta)/\GL_d$, where $\mathrm{St}_d(V)$ is the Stiefel manifold of $d$-frames in $V$.  The unique map $\Theta \to \{\ast\}$ induces a $G$-equivariant map $\vartheta_V \colon \basespace(V) \to \mathrm{Gr}_d(V)$, and we let $\vartheta_V^{\ast} \xi^\bot$ be the pullback of the complement of the tautological bundle over $\mathrm{Gr}_d(V)$.

\begin{defn}
    Define the orthogonal $G$-spectrum $MT\Theta$ by
    \[
    MT\Theta (V) = \mathrm{Th} \left (
    \begin{tikzcd}
        \vartheta_V^{\ast} \xi^\bot \arrow{d}{} \\
        \basespace(V)        
    \end{tikzcd}
    \right ).
    \]
    The structure maps are defined analogously to \cref{MTOspectrum}.
\end{defn}

We have a version of our main Theorem~\eqref{thm:mainthm}, generalized to include tangential structures. The special case $\Theta = \{ \ast \}$ recovers the original (unoriented) statement.

\begin{thm}\label{mainthmtangential} The classifying space of the equivariant cobordism category with $\Theta$-structures is weakly equivalent to the fixed point space of the shifted infinite loop space of $MT\Theta$:
  \begin{equation}
    \label{eq:7}
    B\mathcal{C}_\Theta^G \simeq \left ( \Omega^{\infty-1} MT\Theta \right )^G
  \end{equation}
\end{thm}

Similarly to the unoriented case, this is the fixed point level statement of the following.

\begin{thm}
  There is an equivariant equivalence
  \begin{equation}
    \label{eq:4}
    B\mathcal{C}_\Theta(\mathcal{U}_G) \simeq \Omega^{\mathcal{U}_G-1} MT\Theta.
  \end{equation}
\end{thm}

The embedded cobordism categories $\mathcal{C}_\Theta(V)$ are defined similarly to the unoriented case. We also have analogues of spaces of manifolds.

\begin{defn}
    For $V$ a finite dimensional $G$-representation, let $\Psi_\Theta (V)$ be the set of pairs $(M, \ell)$ where $M\in \Psi_d (V)$ and $\ell$ is a $\Theta$-structure on $M$.
\end{defn}

The topology on $\Psi_\Theta (V)$ is defined as for $\Psi_d(V)$, now modeled on the spaces 
$$\Gamma_c (NM) \times \Map_{\GL_d}(\Fr(TM), \Theta). $$ 
Define the left conjugation action of $G$ on $\Psi_\Theta (V)$ by $g(M, \ell) = (gM, g\ell)$, where $gM\subset V$ is the image of $M\subset V$ under $G$, and $g\ell \colon \Fr(T(gM)) \to \Theta$ is defined by $g\ell (b) = g \left (\ell(g^{-1} b) \right )$, i.e.\ so that
\[
\begin{tikzcd}
    \Fr(TM) \arrow{r}{\ell}\arrow{d}{g} & \Theta \arrow{d}{g} \\
    \Fr(T(gM)) \arrow{r}{g\ell} & \Theta
\end{tikzcd}
\]
commutes.  With this definition the fixed point space $\Psi_\Theta (V)^G$ consists of manifolds $M$ equivariantly embedded in $V$, with equivariant $\Theta$-structures.

Our proof of the main theorem generalizes to the case of tangential structures without any substantial changes, since we can canonically carry the $\Theta$-structures along diffeomorphisms and restrict them to submanifolds when necessary.

\newcommand{\Or}{\mathrm{Or}}  %% orbit category

\begin{rmrk}\label{rem:htpy-type-of-Theta}
  Both
  the cobordism category $\mathcal{C}_\Theta$ and the spectrum $MT\Theta$ depend functorially on the $(\GL_d \times G)$-space $\Theta$, and the equivalences~\eqref{eq:7} and~\eqref{eq:4} are through zig-zags of natural transformations.  A $(\GL_d \times G)$-equivariant map $\Theta \to \Theta'$ induces a weak equivalence of domains and codomains of~\eqref{eq:7} and~\eqref{eq:4} when the induced maps $\Theta^H \to (\Theta')^H$ are non-equivariant weak equivalences for all $H \leq G$.  Thus, in the terminology of equivariant homotopy theory, the functoriality in $\Theta$ is ``Borel equivariant'' for the $\GL_d$-action and ``genuine equivariant'' for the $G$-action.

  The equivariant homotopy type of $\Theta$ is perhaps easier to analyze via the associated object~\eqref{eq:12} below, as follows.  First, pass to the $G$-equivariant space $B = B(\mathcal{U}) = (\mathrm{St}_d(\mathcal{U}) \times \Theta)/\GL_d$, which comes with a $G$-equivariant map $B \to \mathrm{St}_d(\mathcal{U})/\GL_d = \mathrm{Gr}_d(\mathcal{U})$.  No essential information is lost by this process, since the canonical map
  \begin{equation*}
    \Theta \to \mathrm{St}_d(\mathcal{U}) \times_{\mathrm{Gr}_d(\mathcal{U})} B
  \end{equation*}
  is a $(\GL_d \times G)$-equivariant map inducing a weak equivalence of fixed points for any $H < G$.  Therefore the input to Theorem~\ref{mainthmtangential} could equivalently be specified as a $G$-equivariant space over $\mathrm{Gr}_d(\mathcal{U})$.  The fixed point space $B^H = \Map_G(G/H,B)$ comes with a map
  \begin{equation*}
    B^H \to (\mathrm{Gr}_d(\mathcal{U}))^H,
  \end{equation*}
  and as usual both may be regarded as functors from the orbit category $\Or(G)$, whose morphisms are $G$-equivariant maps $G/H \to G/H'$.  By Elmendorff's theorem, no essential loss of information is lost in replacing the equivariant space $B$ by the functor $G/H \mapsto B^H$.    We then arrive at an object
  \begin{equation}\label{eq:12}
    (G/H \mapsto B^H) \in \mathrm{Fun}(\Or(G),\mathrm{Top})_{/((G/H) \mapsto \mathrm{Gr}_d(\mathcal{U})^H)},
  \end{equation}
  encoding the equivariant homotopy type of $\Theta$ in the sense explained.

  The codomain of~\eqref{eq:12} denotes the over category in the category of functors.
  For later use let us point out that the map  
  \begin{align*}
    \mathrm{Gr}_d(\mathcal{U})^H & \to \coprod_{n=0}^d \mathrm{Gr}_n(\mathcal{U})\\
    V & \mapsto V^H
  \end{align*}
  defines a natural transformation of functors of $(G/H) \in \Or(G)$, whose codomain is the constant functor.  In particular it is invariant under the group of automorphisms of $G/H \in \Or(G)$, which is the Weyl group $W_G(H) = N_G(H)/H$.  Hence we get an induced map of spaces
  \begin{equation*}
    \coprod_{(H)} (B^H)_{hW_G(H)} \to
    \coprod_{(H)} \big(\mathrm{Gr}_d(\mathcal{U})^H\big)_{hW_G(H)} \to
    \coprod_{n=0}^d \mathrm{Gr}_n(\mathcal{U})
    \simeq \coprod_{n=0}^d BO_n,
  \end{equation*}
  where the disjoint unions are over subgroups $H < G$, one in each conjugacy class.  (More canonically, the domain may be written as $\hocolim_{G/H \in \Or(G)} B^H$.)
\end{rmrk}

\section{Examples and applications}

We discuss some special cases of \cref{mainthmtangential} for various tangential structures, as well as some consequences and potential applications.

\subsection{Some special cases}
\label{sec:examples}

\begin{example}[Orientation reversing action] Let $G = \mathbf{Z}/2$ and $\Theta = \{-1, +1\}$, where $\GL_d$ acts on $\Theta$ via the sign of the determinant, and $G$ acts on $\Theta$ by transposition. In this case $\Theta$-manifolds are manifolds with an orientation reversing involution.
\end{example}

In the special case $d=2$ of the above example, our \cref{mainthmtangential} recovers the results of Nisan Stiennon on characteristic classes of real curves (\cite{nisan}).

\begin{example}[Unoriented manifolds] If $\Theta = \{ \ast \}$ we recover our original statement in Theorem~\eqref{thm:mainthm} about unoriented manifolds.
\end{example}

It is interesting to consider the content of \cref{mainthmtangential} in the case of $0$-manifolds. Let $\mathcal{F}_G$ denote a skeleton of the category of finite $G$-sets,  and for an object $A\in \mathcal{F}_G$ let $\Sigma_A = Aut(A)$ denote the group of equivariant bijections of $A$.  Disjoint union of $G$-sets gives rise to a symmetric monoidal structure on $\mathcal{F}_G$, and we may arrange that the underlying monoidal structure is strict (associators and units are identities).
Then the space
$$\mathcal{M} = \coprod_{A \in \mathcal{F}_G} B\Sigma_A,$$
inherits the structure of a topological monoid, which makes it a model for the cobordism category $\mathcal{C}_0^G$.

The spectrum $\MTO_0 = \mathbf{S}_G$ is the $G$-equivariant sphere spectrum.  As a special case of \cref{mainthmtangential} we obtain the following theorem, due to Guillou and May.

\begin{thm}[Equivariant Barratt-Priddy-Quillen Theorem, {\cite[Section 6.1]{equi_bpq}}]
    There is a weak equivalence of spaces
    $$
    \Omega B \left ( \coprod_{A \in \mathcal{F}_G} B\Sigma_A \right) \simeq \left ( \Omega^\infty \mathbf{S}_G \right )^G\!.
    $$
\end{thm}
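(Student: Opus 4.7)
The plan is to deduce the statement from \cref{mainthmtangential} by specializing to $d=0$ and trivial tangential structure $\Theta=\{\ast\}$. First I would unpack the left hand side. In dimension $d=0$, objects of $\mathcal{C}_0^G$ are $(-1)$-dimensional closed $G$-manifolds, of which there is only one (the empty manifold), and morphisms $\varnothing \to \varnothing$ are equivalence classes of $0$-dimensional equivariant cobordisms, i.e.\ finite $G$-sets. Using the homotopy type description of morphism spaces from the introduction,
\[
\mathcal{C}_0^G(\varnothing,\varnothing) \simeq \coprod_{[A]\in \mathcal{F}_G} B\Diff^G(A) \;=\; \coprod_{[A]\in \mathcal{F}_G} B\Sigma_A \;=\; \mathcal{M},
\]
since a finite $G$-set admits no nontrivial smooth structure and $\Diff^G(A) = \Sigma_A$. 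Composition corresponds to disjoint union, so as topological categories $\mathcal{C}_0^G$ is equivalent to the one-object category associated with the topological monoid $\mathcal{M}$. Taking classifying spaces gives $B\mathcal{C}_0^G \simeq B\mathcal{M}$.

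Next I would identify the right hand side. For the zero Grassmannian $\mathrm{Gr}_0(V)$ is a single point and the orthogonal complement $\xi_V^\bot$ of the tautological $0$-plane is all of $V$, so
\[
MTO_0(V) \;=\; \mathrm{Th}\bigl(V \to \mathrm{pt}\bigr) \;=\; S^V.
\]
The structure maps $MTO_0(V)\wedge S^W \to MTO_0(V\oplus W)$ coming from \cref{MTOspectrum} become the canonical identifications $S^V \wedge S^W \cong S^{V\oplus W}$, so $MTO_0$ is the equivariant sphere spectrum $\mathbf{S}_G$. Consequently
\[
\bigl(\Omega^{\infty-1} MTO_0\bigr)^G \;=\; \bigl(\Omega^{\mathcal{U}_G} \sh_1 \mathbf{S}_G\bigr)^G \;\simeq\; \bigl(\Omega^{\infty-1}\mathbf{S}_G\bigr)^G.
\]

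Now I would combine these. Applying \cref{mainthmtangential} with $\Theta=\{\ast\}$, which reduces to \eqref{mainthm}, yields
\[
B\mathcal{M} \;\simeq\; B\mathcal{C}_0^G \;\simeq\; \bigl(\Omega^{\infty-1}\mathbf{S}_G\bigr)^G.
\]
Looping once and commuting the loop with the fixed points (which holds since $\Omega$ is built from the pointed mapping space $\mathrm{Map}_\ast(S^1,-)$ with $S^1$ trivial as a $G$-space),
\[
\Omega B\mathcal{M} \;\simeq\; \Omega\bigl(\Omega^{\infty-1}\mathbf{S}_G\bigr)^G \;\simeq\; \bigl(\Omega\cdot \Omega^{\infty-1}\mathbf{S}_G\bigr)^G \;=\; \bigl(\Omega^{\infty}\mathbf{S}_G\bigr)^G,
\]
which is the claim.

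The only nontrivial point, and the one I would treat with the most care, is the equivalence $\mathcal{C}_0^G \simeq \mathcal{M}$ at the level of topological categories. This amounts to verifying that the fixed category $(\mathcal{C}_0(\mathcal{U}_G))^G$ really is equivalent to a single-object category whose endomorphisms recover $\coprod_{[A]} B\Sigma_A$; equivalently, that every $G$-equivariant $0$-manifold embedded in $\mathcal{U}_G$ is isotopic through equivariant embeddings to its $\Sigma_A$-orbit of standard embeddings. This follows from \cref{B_GDiff} applied in dimension $0$ (the space of equivariant embeddings of a finite $G$-set into $\mathcal{U}_G$ is equivariantly contractible by Mostow--Palais), and once this identification is in hand the rest is formal.
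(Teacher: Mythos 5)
Your proposal is correct and follows the same route the paper takes: identify $\mathcal{C}_0^G$ with the one-object category on the monoid $\mathcal{M}=\coprod_{[A]}B\Sigma_A$, identify $MTO_0$ with $\mathbf{S}_G$ (point Grassmannian, $\xi^\bot=V$, structure maps the canonical ones), and specialize the main theorem, delooping once and commuting $\Omega$ with $G$-fixed points. The paper states this specialization without elaboration, so your added care about the category-level equivalence $\mathcal{C}_0^G\simeq\mathcal{M}$ via \cref{B_GDiff} in dimension $0$ is a reasonable filling-in of the detail it leaves implicit.
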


More generally we can consider the case of $0$-manifolds for any $G$-space $\Theta$, and get a weak equivalence
\begin{equation}\label{eq:15}
\Omega B \left ( \coprod_{[A] \in \mathcal{F}_G} \Map_G(A, \Theta) \hquotient \Sigma_A \right) \simeq \left( \Omega^\infty\Sigma^\infty \Theta_{+}\right)^G\!,
\end{equation}
where $\Map_G(A, \Theta)$ is the space of $G$-equivariant maps and $\Map_G(A, \Theta) \hquotient \Sigma_A$ is the homotopy quotient 
$$
\Map_G(A, \Theta) \times_{\Sigma_A} E\Sigma_A.
$$
The domain of~\eqref{eq:15} may up to homotopy be identified as the group completion of a \emph{free} $E_\infty$ space, using that a finite $G$-set splits canonically as a disjoint union of orbits.  We leave it as an exercise
to relate this observation to the tom Dieck splitting, cf.\ \cite[Section 6.2]{equi_bpq}.

Finally, for any orthogonal representation $\rho$ of $G$, our main theorem gives an interpretation of the infinite loop space associated to an inverse representation sphere $\mathbf{S}_G^{-\rho}$, i.e.\ the orthogonal spectrum
$\mathbf{S}_G^{-\rho} = \mathcal{J}_G(\rho, -)$ (see \cite[Definition 4.6]{MandellMay}).
\begin{example}[Action on frames] Let $\rho\colon G \to \GL_d$ be a representation of $G$, and let $\Theta = \GL_d$ with left $(\GL_d \times G)$-action given
  by $(A,g)B = AB\rho(g^{-1})$.  In this case an equivariant $\Theta$-structure on a $G$-manifold $M$ is an equivariant framing: an equivariant isomorphism $M \times \R^d \to TM$ of vector bundles over $M$, where $G$ acts on $\R^d$ via $\rho$.

  Note that with this action,
    $$\Theta \times_{\GL_d} \mathrm{St}_d(V) \cong \mathcal{L}(\rho, V),$$
    so $MT\Theta$ is isomorphic to the spectrum $\mathbf{S_G}^{-\rho} = \mathcal{J}_G(\rho, -)$.  Hence we get an equivariant weak equivalence
    \begin{equation*}
      \Omega^\infty \mathbf{S}_G^{-\rho} \simeq \Omega B \mathcal{C}_\Theta.
    \end{equation*}
\end{example}

\subsection{Rational cohomology and characteristic classes}
\label{sec:rati-cohom-char}

A
main application of cobordism categories in the non-equivariant case is to moduli spaces of manifolds, see the recent survey \cite{haynesbook} and the references therein.  Let us end this section with some preliminary remarks about a possible application of Theorems~\ref{thm:mainthm} and \ref{mainthmtangential} to moduli spaces of \emph{equivariant} manifolds, focusing on closed manifolds for simplicity.  We thank Markus Hausmann for helpful discussions regarding stable equivariant rational homotopy theory.

The starting point is the observation that a closed $d$-manifold may be regarded as an endomorphism of the $(d-1)$-manifold $\emptyset$, leading by Theorem~\ref{mainthmtangential} to a map
\begin{equation}\label{eq:1x}
  \coprod_L B\Diff^G_\Theta(L) \simeq \mathcal{C}^G_\Theta(\emptyset,\emptyset) \to \Omega B\mathcal{C}^G_\Theta \simeq (\Omega^\infty MT\Theta)^G.
\end{equation}
As explained in Section~\ref{sec:equivariant-bundles} the space $B\Diff^G(L)$ is a classifying space for smooth equivariant bundles whose fibers are diffeomorphic to $L$, and similarly $B\Diff^G_\Theta(L)$ is a classifying space for such bundles equipped with a fiberwise equivariant $\Theta$-structure.  Any cohomology class $c \in H^*((\Omega^\infty MT\Theta)^G)$ may therefore be pulled back along~(\ref{eq:1x}) to give a characteristic class for such bundles.

The ring of characteristic classes arising by this construction is most conveniently expressed in terms of the objects associated to $\Theta$ in Remark~\ref{rem:htpy-type-of-Theta}.  As explained there, the $H$-fixed points of the $G$-space $B = (\mathrm{St}_d(\mathrm{U}) \times \Theta)/\GL_d$ come with a map
\begin{equation*}
  \theta_H: (B^H)_{hW_G(H)} \to \coprod_{n = 0}^d BO_n.
\end{equation*}

\begin{lemma}
  Let $\theta_H^* \gamma$ denote the vector bundle on $(B^H)_{hW_G(H)}$ classified by the map $\theta_H$ above, and let $(B^H_{hW_G(H)})^{-\theta_H}$ denote the Thom spectrum of the inverse virtual bundle.  Then there is a canonical rational equivalence of spectra
  \begin{equation}\label{eq:3x}
    (MT\Theta)^G \to \prod_H (B^H_{hW_G(H)})^{-\theta_H},
  \end{equation}
  where the product is indexed by subgroups $H < G$, one in each conjugacy class.
\end{lemma}

A special case of the following corollary was originally part of joint work between the second author and Bena Tshishiku.  We thank him for several helpful discussions.
\begin{coro}
  For each $n \in \{0, \dots, d\}$ write $X_{H,n} = \theta_H^{-1}(BO_n)$, so that $\theta_H$ restricts to a map $\theta_{H,n}: X_{H,n} \to BO_n$.  Let us write $\Q^{w_1(\theta_{H,n})}$ for the local system corresponding to the orientation character of $\theta_{H,n}$.  For each $c \in H^{n+i}(X_{H,n};\Q^{w_1(\theta_{H,n})})$ there is an associated class
  \begin{equation*}
    \kappa_{H,c} \in H^i((\Omega^\infty MT\Theta)^G;\Q).
  \end{equation*}
  If $B^H$ has finite type for all $H$, then these classes induce an isomorphism
  \begin{equation*}
    \Q[\kappa_{H,c}] \to H^*((\Omega^\infty MT\Theta)^G_0;\Q)
  \end{equation*}
  whose domain is the free graded-commutative algebra of the classes $\kappa_{H,c}$, where $H \leq G$ range through one subgroup in each conjugacy class, $n$ ranges through integers between $0$ and $d$, and $c$ ranges through a homogeneous basis for $H^{>n}(X_{H,n};\Q^{w_1(\theta_{H,n})})$.
\end{coro}
\begin{proof}
  The map on infinite loop spaces induced by~(\ref{eq:3x}) induces an isomorphism on rationalized homotopy groups, hence it is a rational equivalence of spaces when restricted to a map between path connected spaces.  The corollary then follows from Thom isomorphism and the usual relationship between rational cohomology of a of finite type rational spectrum and its associated infinite loop space.

    For any $G$-equivariant spectrum $X$ and subgroup $H \leq G$, recall that the \emph{geometric fixed points} are defined as follows.  If $E\mathcal{P}_H$ is a $G$-CW complex with
  $(E\mathcal{P}_H)^K \simeq \{\ast\}$ when $K$ is conjugate to a proper subgroup of $H$ and $(E\mathcal{P}_H)^K = \emptyset$ otherwise, we define $\widetilde{E\mathcal{P}}_H$ as the mapping cone of $(E\mathcal{P}_H)_+ \to S^0$ and the geometric fixed points as
  \begin{equation*}
    \Phi^H(X) = (X \wedge \widetilde{E\mathcal{P}_H})^H = F_G((G/H)_+,X \wedge \widetilde{E\mathcal{P}_H}),
  \end{equation*}
  where $F_G$ denotes the spectrum of $G$-equivariant maps.  There is an evident action of $W = W(H) = \Map_G(G/H,G/H) = N_G(H)/H$, and a $W$-equivariant map $X^H \to \Phi^H(X)$.  We now consider the composition
  \begin{equation*}
    X^G \to X^H \to \Phi^HX \to (\Phi^H X)_{hW},
  \end{equation*}
  where the last map is the projection to the homotopy orbits.  We combine these maps to a natural transformation
  \begin{equation}\label{eq:2x}
    X^G \to \prod_H (\Phi^H X)_{hW(H)},
  \end{equation}
  which we claim is a rational equivalence for any $G$-equivariant spectrum $X$.  To see this it suffices to consider the cases $X = \Sigma^\infty_+ G/K$ for subgroups $K \leq G$.  Then $\Phi^H(\Sigma^\infty_+ G/H) = \Sigma^\infty_+(G/H)^K$ and we must take homotopy colimit as $G/K$ runs through the category of $G$-orbits.  Now homotopy colimit commutes with suspension spectrum and
  \begin{equation*}
    \hocolim_{G/K} (G/H)^K \simeq B(\text{$G$-orbits over $G/H$}) \simeq B(\text{$H$-orbits}) = \coprod_{H/K} \Sigma^\infty_+ BW_H(K),
  \end{equation*}
  where the disjoint union is over one $K \leq H$ in each conjugacy class.  By the tom Dieck splitting, this agrees with the homotopy type of $(\Sigma^\infty_+ (G/H))^H$, but the map is not an equivalence integrally (indeed, the natural transformation~(\ref{eq:2x}) is defined for any $X$, but the tom Dieck splitting only holds for suspension spectra).  On the level of $\pi_0$ it may be identified with the ring homomorphism
  \begin{align*}
    A(H) &\to \prod_{H/K} \Z\\
    X &\mapsto (|X^K|)_{H/K},
  \end{align*}
  which is well known to be a rational isomorphism.  For $X = \Sigma^\infty_+ (G/H)$ both domain and codomain of~(\ref{eq:2x}) have vanishing rational homotopy in all non-zero degrees, so the map is a rational equivalence in this case.  Since the class of spectra for which it is a rational equivalence is closed  under suspension and desuspension, mapping cones, and filtered homotopy colimits, it must be a rational equivalence for all $X$.  See also \cite[Appendix A]{GreenleesMay} and \cite[Section V]{LewisMaySteinberger}. 

  It is well known how geometric fixed points $\Phi^H$ behave on suspension spectra, namely $\Phi^H(\Sigma^\infty_+ A) = \Sigma^\infty _+ A^H$.  This fact generalizes to Thom spectra: the geometric fixed points of the Thom spectrum of an equivariant bundle over $A$ is the Thom spectrum of the non-equivariant bundle over $A^H$ formed by the fixed points, and a similar fact holds for Thom spectra of inverse vector bundles. 
  This proves the claim.  
\end{proof}

For example, we can take $d=2$, $\Theta = \{\pm 1\}$ on which $\GL_2$ acts by the determinant and $G$ acts trivially.  The corresponding equivariant spectrum $MT\Theta$ is an equivariant version of the spectrum $MTSO_2$ from e.g.\ \cite{gmtw}.
The corresponding cobordism category has morphisms oriented surfaces equipped with orientation preserving $G$-action.  Then $\mathrm{St}_2(\mathcal{U}) \times_{\GL_2} \Theta$ is a model for $B_GSO_2$ and the $H$-fixed points split as a disjoint union according to isomorphism type of representation $\rho: G \to SO_2$.  For a representation $\rho: H \to SO_2$ the invariants $\rho^H$ are zero-dimensional, unless $\rho$ is trivial, in which case $\rho^H = \rho$.  Since $SO_2$ is abelian the automorphism group of any representation is $SO_2$ and all Weyl groups are trivial.  It follows that in the notation above, for each subgroup $H \leq G$
\begin{align*}
  X_{H,2} &= BSO_2 \\
  X_{H,1} & = \emptyset\\
  X_{H,0} &= \mathrm{Hom}^{\mathit{non-triv}}(H,SO_2) \times BSO_2.
\end{align*}
In this case we get a rational equivalence
\begin{equation*}
  MT\Theta^G \xrightarrow{\simeq_\Q} \bigvee_{H < G} \bigg(MTSO_2 \vee \bigvee_{\substack{H \to SO_2\\\text{non-zero}}} \Sigma^\infty_+ BSO_2 \bigg).
\end{equation*}
The classes $e^{i+1} \in H^{2i+2}(X_{H,2})$ give rise to classes $\kappa_{H,e^{i+1}} \in H^{2i}(\Omega^\infty MT\Theta^G;\Q)$ for each $H < G$, while for each non-trivial homomorphism $\rho: H \to SO_2$ the class $\rho \otimes e^i \in H^{2i}(X_{H,0};\Q)$ gives rise to a class $\kappa_{H,\rho \otimes e^i} \in H^{2i}(\Omega^\infty MT\Theta^G;\Q)$.  As explained, each of these give rise to characteristic classes of bundles of $G$-equivariant surfaces, which seem to be the $G$-equivariant analogues of the Miller--Morita--Mumford classes.  These characteristic classes seem worth a further study.

More
generally, it seems interesting to investigate the properties of the maps $B\Diff_\Theta^G(L) \to \Omega^\infty MT\Theta^G$.  For example, whether there are good cases in which these maps induce homology isomorphisms in a range, or, less ambitiously, whether all the characteristic classes above are detected on some $L$.  Based on experience from the non-equivariant case (such as \cite{EbertCrelle}) this may be more reasonable in cases where $X_{H,n} = \emptyset$ when $n$ is odd (for instance when $\Theta = \GL_{2d}(\R)/\GL_d(\C)$ with trivial $G$ action).

\section{Relationship to equivariant bordism} \label{sec:equi_bordism}

\subsection{Unoriented equivariant bordism groups}
Let $\mathcal{N}_d^G$ denote the geometric cobordism group of unoriented $d$-dimensional $G$-manifolds. As shown in \cite[Satz 5]{tom_dieck}, there is an orthogonal $G$-spectrum $mO$ and an isomorphism
\begin{align}\label{bordism}
    \mathcal{N}_d^G\cong \pi_d^G (mO)
\end{align}
for finite $G$.  After recalling the definition of $mO$ following \cite[Section 5]{tom_dieck} (see also \cite[Sections 6.1 and 6.2]{global}), we explain how this isomorphism relates to our main result.  We thank Gunnar Carlsson for helpful correspondence about equivariant bordism. 

\begin{defn}\label{mO}
  Let $mO$ be the orthogonal $G$-spectrum defined by
  \begin{align*}
    mO(V)=
    \mathrm{Th} \left (
    \begin{tikzcd} 
      \xi \arrow{d}{} \\
      \mathrm{Gr}_{|V|}(V\oplus \mathbf{R}^{\infty})
    \end{tikzcd}
    \right ),
  \end{align*} 
  where $\xi$ is the tautological $|V|$-dimensional bundle over the Grassmanian.
\end{defn}

\begin{rmrk}
  We emphasize that
  this is different from the spectrum $MO_G$ of \cite[XV.2]{alaska} defined in terms of Thom spaces over $\mathrm{Gr}_{|V|}(V\oplus V)$.  Both are genuine $G$-equivariant spectra, and $MO_G$ is a localization of $mO$, as explained in \cite[Corollary 6.1.35]{global} (in the closely related setting of ``global spectra'').  The colimit~(\ref{filtration}) below expresses the spectrum $mO$ as a kind of localization of the equivariant spectra $\MTO_d$.
\end{rmrk}

Recall
that for an orthogonal $G$-spectrum $E$ and a $G$-representation $W$ we write $\sh_W E$ for the shifted spectrum given by $\sh_W E (V)= E(V\oplus W)$ (in fact $\sh_W E \simeq \Sigma^W E$).  We will abbreviate $\sh_{\mathbf{R}^d}$ as $\sh_d$.
\begin{defn}\label{bundlemap}
Let $p_d \colon \sh_d \MTO_d \to mO$ be the map of spectra induced by the maps of bundles
\[
\begin{tikzcd}
\xi_{V\oplus \mathbf{R}^d}^\bot \arrow{r}{\bot} \arrow{d}{} & \xi_{V\oplus \mathbf{R}^d} \arrow{r}{} \arrow{d}{} & \xi_{V\oplus \mathbf{R}^\infty} \arrow{d}{}\\
\mathrm{Gr}_d({V\oplus \mathbf{R}^d}) \arrow{r}{\bot} & \mathrm{Gr}_{|V|}({V\oplus \mathbf{R}^d}) \arrow{r}{} & \mathrm{Gr}_{|V|}({V\oplus \mathbf{R}^\infty}).
\end{tikzcd}
\]
Similarly, define spectrum maps
$j_d\colon \sh_{d} \MTO_d \to \sh_{d+1} \MTO_{d+1}$ via the bundle maps
\[
\begin{tikzcd}
    \xi_{V\oplus \mathbf{R}^{d}}^\bot \arrow[r] \arrow[d] & \xi_{V\oplus \mathbf{R}^{d+1}}^\bot \arrow[d] \\
    \mathrm{Gr}_d(V\oplus \mathbf{R}^{d}) \arrow[r, "-\oplus \mathbf{R}"] & \mathrm{Gr}_{d+1} (V\oplus \mathbf{R}^{d+1}),
\end{tikzcd}
\]
cf. \cite[3.4]{gmtw} and \cite[Proposition VI.2.12]{global}
\end{defn}

The maps $p_d$ and $j_d$ are compatible in the sense that the $p_d$ define an isomorphism from the colimit
\begin{align}\label{filtration}
\mathbb{S}\cong \MTO_0 \xrightarrow{j_0} \sh_1 \MTO_1 \xrightarrow{j_1}  \ldots \to \sh_d \MTO_d \to \ldots
\end{align}
to $mO$.  Thus the $\MTO_d$ may be viewed as the stages of a filtration on $mO$.

The main technical result in this section is
\begin{proposition}\label{sequence}
  There is a cofiber sequence of $G$-equivariant spectra
    \[
      \Sigma^{\infty+d}_{+} \mathrm{Gr}_{d+1}(\mathcal{U}) \to \sh_{d}\MTO_{d} \xrightarrow{j_{d}} \sh_{d+1} \MTO_{d+1}.
    \]
\end{proposition}
Before giving the proof, let us explain some consequences relevant for us.  Firstly, since equivariant spectra have vanishing homotopy groups in negative degrees ($\pi_i^H = 0$ for all $H < G$ and all $i < 0$), the long exact sequence in homotopy groups implies that $j_d$ induces an isomorphism
\begin{equation*}
  \pi_{i}^G(\MTO_{d}) \xrightarrow{\cong} \pi_{i-1}^G(\MTO_{d+1})
\end{equation*}
for all $i < 0$.
It also implies an exact sequence
\begin{equation*}
  \pi_0^G(\MTO_{d+1}) \to \pi_0^G(\Sigma^\infty_+ \mathrm{Gr}_{d+1}(\mathcal{U})) \to
  \pi_0^G \MTO_d \xrightarrow{(j_{d})_*} \pi_{-1}^G \MTO_{d+1} \to 0.
\end{equation*}
These observations may be combined to the following
\begin{coro}\label{mOMTO}
  The maps $p_d$ induce isomorphisms
  \[
    \pi_{i}^G \MTO_d \xrightarrow{(p_d)_*} \pi_{d+i}^G mO
  \]
  for all $i < 0$, and an exact sequence
  \begin{equation}\label{eq:13}
    \pi_0^G(\MTO_{d+1}) \to \textbf{A}(O(d+1),G) \to \pi_0^G \MTO_d \xrightarrow{(p_{d})_*} \pi_{d}^G mO \to 0,
  \end{equation}
  where $\mathbf{A}(O(d+1),G)$ is a free abelian group with generators given by $(G\times O(d+1))$-conjugacy classes $[H,\rho]$ for a subgroup $H\leq G$ and a homomorphism $\rho\colon H\to O(d+1)$. That is, $\mathbf{A}(O(d+1),G)$ has generating set 
  $$\coprod_{(H)} \textup{Rep}_{d+1}(H)/W_G H, $$
  where $\textup{Rep}_{d+1}(H)/W_G H$ are Weyl group orbits of isomorphism classes of $(d+1)$-di\-men\-sion\-al $H$-representations, and the disjoint union is over conjugacy classes of subgroups $H\leq G$.
\end{coro}
\begin{proof}
  The isomorphisms $\pi_{i}^G(\MTO_{d}) \xrightarrow{\cong} \pi_{i-1}^G(\MTO_{d+1})$ for $i < 0$ become an isomorphism $(p_d)_*: \pi_i^G(\MTO_d) \to \pi_{d+i} mO$ in the colimit.  It then remains to identify $\pi_0^G( \Sigma^\infty_+ \mathrm{Gr}_{d+1}(\mathcal{U}))$ with $\mathbf{A}(O(d+1),G)$.

  This identification is a special case of the tom Dieck splitting, cf.\ \cite[V.9 Corollary 9.3]{equi_stable_htopy}.  Firstly, for any $H < G$ the set  $\pi_0^H(\mathrm{Gr}_d(\mathcal{U})^H)$ is canonically identified with $\textup{Rep}_d(H)$, as we already used: $V \in (\mathrm{Gr}_d(\mathcal{U}))^H$ implies that the $G$-action on $\mathcal{U}$ restricts to an $H$-action on $V \subset \mathcal{U}$, defining an isomorphism class $[V] \in \textup{Rep}_d(H)$.  The tom Dieck splitting then gives
  \begin{equation*}
    (\Sigma^\infty_+ \mathrm{Gr}_d(\mathcal{U}))^G \simeq \bigvee_{H \leq G} \Sigma^\infty_+ (\mathrm{Gr}_d(\mathcal{U})^H)_{hW_G(H)},
  \end{equation*}
  where the subscript denotes homotopy orbits under the action of the Weyl group $W_G(H) = N_G(H)/H$ acting on the $H$-fixed points, and the wedge sum is over subgroups $H \leq G$, one in each conjugacy class.  By taking $\pi_0$ we get an isomorphism $\pi_0((\Sigma^\infty_+ \mathrm{Gr}_d(\mathcal{U}))^G) \cong \textbf{A}(O(d),G)$.  
\end{proof}

By the equivariant Whitney embedding theorem we also have 
$$\mathcal{N}_{d-1}^G\cong \pi_0^G B\mathcal{C}_d. $$
Hence we recover~\eqref{bordism} as a consequence of Theorem~\ref{thm:mainthm} and \cref{mOMTO}.  The sequence \eqref{eq:13} may now be written
\begin{equation*}
  \pi_1^G B\mathcal{C}_{d+1} \xrightarrow{\chi} \textbf{A}(O(d+1),G) \to \pi_1^G B\mathcal{C}_d
   \to \mathcal{N}_d^G \to 0,
\end{equation*}
where the homomorphism $\chi$ can be thought of as a kind of equivariant Euler
characteristic.

\begin{proof}[Proof of Proposition~\ref{sequence}]
    We can follow the proof of \cite[Proposition 3.1]{gmtw}. For any two $G$-equivariant vector bundles $E$ and $F$ over the same base $G$-space $B$, there is a $G$-cofiber sequence
    \begin{align}\label{cofiber}
        \mathrm{Th}(p^* E) \to \mathrm{Th}(E) \to \mathrm{Th}(E\oplus F),
    \end{align}
    where $p\colon S(F)\to B$ is the bundle projection of the sphere bundle.
    
    Apply \eqref{cofiber} to $B=\mathrm{Gr}_d(V\oplus 1)$, $F_V=\xi$ and $E_V=\xi^\bot$ (the tautological bundle and its orthogonal complement), to get the cofiber sequence of $G$-spaces
    \begin{align}\label{cofibspace}
        \mathrm{Th}(p^* E_V) \to \MTO_d(V\oplus 1) \to \Sigma^{V\oplus 1} \mathrm{Gr}_d(V\oplus 1)_+.
    \end{align}
    The Thom spaces $\mathrm{Th}(p^* E_V)$ for varying $V$ assemble into an orthogonal spectrum $\mathrm{Th}(p^*E)$ that is equivalent to $\MTO_{d-1}$ for the following reason.
    
    Consider the $G$-fiber sequence
    \begin{align}
        \mathrm{Gr}_{d-1}(V)\to   S(F_V) \to  S^{V\oplus 1},
    \end{align}
    mapping $(L,v)\in S(F_V)$ to the unit vector $v\in S^{V\oplus 1}$.
    For a subgroup $H\leq G$, the fixed point space $(S^{V\oplus 1})^H=S^{V^H\oplus 1}$ is $\dim (V^H)$-connected. This shows that the map $\mathrm{Gr}_{d-1}(V)\to   S(F_V)$ is $\dim (V^H)$-connected on $H$-fixed points. The bundle $p^* E_V$ pulls back to the complement of the tautological bunle, $\xi^\bot$ over $\mathrm{Gr}_{d-1}(V)$, hence giving a map $\MTO_{d-1}(V)\to \mathrm{Th}(p^*E_V)$ that is $(2\dim (V^H)-d)$-connected on $H$-fixed points.
    
    Thus we proved the equivalence of spectra $\MTO_{d-1}\simeq \mathrm{Th}(p^* E)$, which together with the sequence \eqref{cofibspace} implies the statement of the lemma.
\end{proof}

\begin{rmrk}[Compact Lie groups]
	Let $G$ be a compact Lie group. Our definitions in \cref{sec:defs}, and hence both sides of our main Theorem~\ref{thm:mainthm} make sense in this case as well. We have not pursued to what extent our results may generalize, but we offer the following remarks.
	
    As discussed above, the classical result on geometric equivariant bordism can be recovered by taking $\pi_0$ of both sides in Theorem~\ref{thm:mainthm}. As explained in \cite[6.2.33]{global}, \eqref{bordism} fails e.g.\ for $G = SU(2)$, showing that Theorem~\ref{thm:mainthm} cannot be true for $SU(2)$.
    
    Based on these $\pi_0$ investigations it seems possible that Theorem~\ref{thm:mainthm} could be true whenever $G$ is a product of a finite group and a torus. However, our methods do not immediately generalize to this case, in particular the unstable statement \cref{unstable_theorem} fails for $G = S^1$.  Let us briefly explain this, which follows from an example pointed out by \cite{segal}:  let $G = S^1$, $d = 0$ and let $V$ be $\mathbf{R}^3$ with $G$ acting by rotation around an axis. Then there is a $G$-cofibration sequence
    $$
    S^1 \to S^V \to S^2\wedge G_{+},
    $$
    and hence for any $G$-space $X$ a fibration sequence
    $$
    \Omega^2 X \to \left( \Omega^V X\right )^G \to \Omega X^G.
    $$
    Taking $X = S^V$ this shows that 
    $$\left ( \Omega^V S^V \right )^G \simeq \mathbf{Z}\times \Omega^2 S^3.$$
    For the representation $V$ above and taking $d = 0$ we have $\Omega B\mathcal{C}_d(V)^G\simeq \mathbf{Z}$, since $G$-equivariant configurations of points in $V$ are configurations of points on $V^G = \mathbf{R}$. This shows failure of \cref{unstable_theorem}, since by taking loops of fixed points on both sides we get
    \begin{equation*}
    \Omega B\mathcal{C}_d(V)^G\simeq \mathbf{Z} \not\simeq  \mathbf{Z}\times \Omega^2 S^3 \simeq \left ( \Omega^V S^V \right )^G\qedhere
  \end{equation*}
\end{rmrk}

\bibliographystyle{alpha}
\bibliography{cobcat_bib}

\end{document}